\definecolor{links}{HTML}{006E9A}
\definecolor{urls}{HTML}{9B7A00}
\setlist[enumerate,0]{label=(\alph*)}
\newcommand{\footremember}[2]{%
  \footnote{#2}
  \newcounter{#1}
  \setcounter{#1}{\value{footnote}}%
}
\newcommand{\footrecall}[1]{%
  \footnotemark[\value{#1}]%
}
\DeclarePairedDelimiter\braces\lbrace\rbrace
\DeclarePairedDelimiterX\set[2]\lbrace\rbrace{#1 \mathrel{\delimsize\vert} #2}
\newcommand{\DeclareAbbrevation}[2]{\newcommand{#1}{\@ifnextchar{.}{\emph{#2}}{\emph{#2}.\@\xspace}}}
\DeclareAbbrevation{\ie}{i.e}
\DeclareAbbrevation{\eg}{e.g}
\DeclareAbbrevation{\cf}{cf}
\DeclareAbbrevation{\etc}{etc}
\DeclareAbbrevation{\resp}{resp}
\DeclareAbbrevation{\etal}{et al}
\DeclareAbbrevation{\ibid}{ibid}
\DeclareAbbrevation{\opcit}{op.~cit}
\DeclareAbbrevation{\ca}{ca}
\theoremstyle{definition}
\newtheorem{definition}{Definition}[subsection]
\newtheorem{lemma}[definition]{Lemma}
\newtheorem{theorem}[definition]{Theorem}
\newtheorem{proposition}[definition]{Proposition}
\newtheorem{corollary}[definition]{Corollary}
\newtheorem{example}[definition]{Example}
\newtheorem{terminology}[definition]{Terminology}
\newtheorem{remark}[definition]{Remark}
\newtheorem{notation}[definition]{Notation}
\newenvironment{manualtheorem}[1]{%
  \manualtheoreminner
}{\endmanualtheoreminner}
\numberwithin{equation}{section}
\mathchardef\mh="2D
\newcommand{\cat}[1]{\mathscr{#1}}
\newcommand{\dcat}[1]{\mathbb{#1}}
\newcommand{\class}[1]{\mathcal{#1}}
\newcommand{\ptdendo}[1]{\mathsf{#1}}
\newcommand{\monad}[1]{\mathsf{#1}}
\newcommand{\comonad}[1]{\mathsf{#1}}
\newcommand{\Ga}{\ensuremath{\alpha}}
\newcommand{\Gb}{\ensuremath{\beta}}
\newcommand{\Gg}{\ensuremath{\gamma}}
\newcommand{\Ge}{\ensuremath{\epsilon}}
\newcommand{\Gh}{\ensuremath{\eta}}
\newcommand{\Gm}{\ensuremath{\mu}}
\newcommand{\Gp}{\ensuremath{\pi}}
\newcommand{\Gt}{\ensuremath{\tau}}
\newcommand{\Gth}{\ensuremath{\theta}}
\renewcommand{\qoppa}{\textgreek{\textqoppa}}
\DeclareMathAlphabet{\mathbbe}{U}{bbold}{m}{n}
\newcommand{\2}{\ensuremath{\mathbbe{2}}}
\NewDocumentCommand\YoSymScaled{m}{
  \raisebox{#1em * \real{-.02}}{%
    \includegraphics[quiet=true,draft=false,height=#1em * \real{.67},keepaspectratio]{}%
    \hspace{0.1em}%
  }
}
\NewDocumentCommand{\yosym}{}{
  \mathord{%
    \mathchoice{\YoSymScaled{1}}{\YoSymScaled{1}}{\YoSymScaled{\defaultscriptratio}}{\YoSymScaled{\defaultscriptscriptratio}}
  }%
}
\DeclareFontFamily{U}{mathx}{}
\DeclareFontShape{U}{mathx}{m}{n}{<-> mathx10}{}
\DeclareSymbolFont{mathx}{U}{mathx}{m}{n}
\DeclareMathAccent{\widehat}{0}{mathx}{"70}
\DeclareMathAccent{\widecheck}{0}{mathx}{"71}
\newcommand{\defeq}{\coloneqq}
\newcommand{\eqdef}{\eqqcolon}
\newcommand{\co}{\colon}
\newcommand{\card}[1]{\#{#1}}
\newcommand{\Hom}[3]{{#1}(#2,#3)}
\NewDocumentCommand{\id}{o}{\mathrm{id}\IfValueT{#1}{_{#1}}}
\NewDocumentCommand{\idv}{o}{\mathbf{id}}
\NewDocumentCommand{\Id}{o}{\mathrm{Id}\IfValueT{#1}{_{#1}}}
\DeclareMathOperator{\dom}{\mathrm{dom}}
\DeclareMathOperator{\cod}{\mathrm{cod}}
\newcommand{\op}[1]{#1^{\mathrm{op}}}
\newcommand{\pair}[2]{\langle #1, #2 \rangle}
\newcommand{\proj}[1]{\Gp_{#1}}
\newcommand{\projl}{\proj{0}}
\newcommand{\projr}{\proj{1}}
\newcommand{\copair}[2]{[ #1, #2 ]}
\NewDocumentCommand{\injsym}{}{\upsilon}
\newcommand{\inj}[1]{\injsym_{#1}}
\newcommand{\inl}{\inj{0}}
\newcommand{\inr}{\inj{1}}
\newcommand{\Func}[2]{[#1,#2]}
\newcommand{\leibpush}[1]{\mathbin{\widehat{#1}}}
\newcommand{\leibpushapp}[1]{\widehat{#1}}
\newcommand{\leibpullapp}[1]{\widecheck{#1}}
\NewDocumentCommand{\Set}{o}{\mathbf{Set}\IfValueT{#1}{_{#1}}}
\newcommand{\FinSet}{\mathbf{FinSet}}
\newcommand{\Cat}{\mathbf{Cat}}
\newcommand{\Comma}[2]{{#1} \mathbin{\downarrow} {#2}}
\newcommand{\Slice}[2]{{#1} / {#2}}
\newcommand{\Coslice}[2]{{#1} / {#2}}
\DeclareMathOperator{\catEl}{\int}
\newcommand{\Arr}[1]{#1^\shortrightarrow}
\newcommand{\ArrFull}[2]{#1^\shortrightarrow_{#2}}
\newcommand{\CartArr}[1]{#1^\shortrightarrow_{\mathrm{cart}}}
\newcommand{\CartMono}[1]{#1^{\mono}_{\mathrm{cart}}}
\newcommand{\FibOb}[2]{#2_{#1\mh\mathrm{cart}}}
\newcommand{\Adju}{\mathbf{Adj}}
\DeclareMathOperator*{\colim}{colim}
\newcommand{\Lan}{\operatorname{Lan}}
\newcommand{\Den}[1]{\mathrm{D}_{#1}} 
\NewDocumentCommand{\mono}{s}{\IfBooleanTF{#1}{\leftarrowtail}{\rightarrowtail}}
\NewDocumentCommand{\epi}{s}{\IfBooleanTF{#1}{\twoheadleftarrow}{\twoheadrightarrow}}
\newcommand{\adjunction}[4]{%
  #1\colon #2%
  \mathrel{\vcenter{%
    \offinterlineskip\m@th
    \ialign{%
      \hfil$##$\hfil\cr
      \longrightarrow\cr
      \noalign{\kern-.3ex}
      \smallbot\cr
      \noalign{\kern.1ex}
      \longleftarrow\cr
    }%
  }}%
  #3 \noloc #4%
}
\newcommand{\noloc}{%
  \nobreak
  \mspace{6mu plus 1mu}
  {:}
  \nonscript\mkern-\thinmuskip
  \mathpunct{}
  \mspace{2mu}
}
\newcommand{\smallbot}{%
  \begingroup\setlength\unitlength{.25em}%
  \begin{picture}(1,1)
  \roundcap
  \polyline(0,0)(1,0)
  \polyline(0.5,0)(0.5,1)
  \end{picture}%
  \endgroup
}
\tikzset{
  epi/.style = {commutative diagrams/two heads},
  mono/.style = {commutative diagrams/tail},
  cof/.style = {{Triangle[reversed,scale=0.9]}->},
  fib/.style = {-{Triangle[open,scale=0.9]}},
  weq/.style = {"\sim"'{sloped,font=\tiny,#1}},
  tcof/.style = {cof,weq={#1}},
  tcof/.default = {above},
  tfib/.style = {fib,weq={#1}},
  tfib/.default = {above},
  weq/.default = {above},
  bifib/.style = {{Triangle[reversed,scale=0.9]}-{Triangle[open,scale=0.9]}},
  verto/.style = {"\bullet"'{marking,scale=0.7,#1}},
}
\NewDocumentCommand{\pushout}{D<>{0} O{6ex}}{%
  \arrow[phantom,start anchor=center,to path={-- ++({#1+135}:#2) \tikztonodes}, "\ulcorner"]
}
\NewDocumentCommand{\pullback}{D<>{0} O{6ex}}{%
  \arrow[phantom,start anchor=center,to path={-- ++({#1-45}:#2) \tikztonodes}, "\lrcorner"]
}
\newcommand{\placeholder}{\scalerel*{\tikz[baseline=0pt,color=black!60!white]{\draw (0,0) circle (3pt);}}{\bigcirc}}
\newcommand{\object}{\scalerel*{\tikz[baseline=0pt]{\draw[dashed, dash pattern=on 1 off 1] (0,0) circle (3pt);}}{\bigcirc}}
\NewDocumentCommand{\PSh}{m}{\mathrm{PSh}({#1})}
\NewDocumentCommand{\yo}{g}{\yosym\IfValueT{#1}{#1}}
\newcommand{\subst}[1]{#1^*}
\newcommand{\lan}[1]{#1_!}
\newcommand{\ran}[1]{#1_*}
\NewDocumentCommand{\arrowofstyle}{m m m}{
  \NewDocumentCommand#1{o}{
    \mathrel{\hspace{-0.63ex}\tikz[baseline=-0.7ex, shorten <=3pt, shorten >=2pt]
      \draw[#2,line width=0.08ex](0,0) -- (1.6em,0)
      \IfValueT{##1}{node [midway,scale=0.75,yshift=0.5ex,xshift=#3] {$##1$}};
    \hspace{-0.41ex}}
  }
}
\arrowofstyle{\ordinary}{->}{-0.2ex}
\arrowofstyle{\ordinaryleft}{<-}{0.4ex}
\arrowofstyle{\cof}{cof}{0.0ex}
\arrowofstyle{\fib}{fib}{-0.3ex}
\NewDocumentCommand{\weq}{s}{\IfBooleanTF{#1}{\ordinaryleft[\sim]}{\ordinary[\sim]}}
\newcommand{\tcof}{\cof[\sim]}
\newcommand{\ConfWP}[1]{\operatorname{ConfMnd}_{\text{wp}}^{{#1}}}
\newcommand{\ConfP}[1]{\operatorname{ConfMnd}_{\text{p}}^{{#1}}}
\newcommand{\AlgChain}[2]{{#1}\mh\mathrm{AlgChain}_{#2}}
\newcommand{\Fun}{\mathbf{Fun}}
\newcommand{\Mnd}{\mathbf{Mnd}}
\newcommand{\PtdEndo}{\mathbf{PtdEndo}}
\newcommand{\IdPtdEndo}[1]{\mathsf{Id}_{#1}}
\newcommand{\IdMonad}[1]{\mathsf{Id}_{#1}}
\newcommand{\Alg}[1]{{#1}\mh\mathrm{Alg}}
\newcommand{\Coalg}[1]{{#1}\mh\mathrm{Coalg}}
\newcommand{\dpitchfork}{\pitchfork\mskip-9.8mu\pitchfork}
\newcommand{\drlp}[1]{#1^{\dpitchfork}}
\newcommand{\rlp}[1]{#1^{\pitchfork}}
\newcommand{\llp}[1]{{}^{\pitchfork}#1}
\NewDocumentCommand{\SplPtdEndo}{o}{\operatorname{\mathsf{Spl}}\IfValueT{#1}{^{#1}}}
\NewDocumentCommand{\SplEndo}{o}{\operatorname{\mathrm{Spl}}\IfValueT{#1}{^{#1}}}
\NewDocumentCommand{\splpt}{o}{\mathsf{spl}\IfValueT{#1}{^{#1}}}
\newcommand{\CodPtdEndo}{\mathsf{Tgt}}
\newcommand{\CodEndo}{\operatorname{Tgt}}
\newcommand{\codpt}{\mathsf{tgt}}
\newcommand{\DAlg}[1]{{#1}\mh\mathrm{\dcat{A}lg}}
\newcommand{\DCoalg}[1]{{#1}\mh\mathrm{\dcat{C}oalg}}
\newcommand{\UnderPtd}[1]{#1_{\mathrm{p}}}
\newcommand{\conjugate}[1]{{#1}_\dagger}
\newcommand{\oneStep}[1]{\mathrm{step}_{#1}}
\newcommand{\vlabel}{\mathord{\ooalign{\hidewidth$\bm{\cdot}$\hidewidth\cr$\scriptstyle\downarrow$}}}
\newcommand{\Sq}[1]{\mathrm{\mathbb{S}q}({#1})}
\newcommand{\VertArr}[1]{#1^{\vlabel}}
\newcommand{\gluelabel}{\mathrm{gl}}
\newcommand{\GlueDbl}[1]{\mathrm{\mathbb{G}l}({#1})}
\newcommand{\glueconc}[1]{{#1}_{\gluelabel}}
\newcommand{\gluedom}{\dom_{\gluelabel}}
\newcommand{\DComma}[2]{{#1} \mathbin{\downarrow} {#2}}
\NewDocumentCommand{\verto}{s}{
  \mathrel{\hspace{-0.63ex}\tikz[baseline=-0.7ex, shorten <=3pt, shorten >=2pt, decoration={markings, mark=at position \IfBooleanTF{#1}{0.53}{0.47} with {\fill circle [radius=1.2pt];}}]{
    \draw[\IfBooleanTF{#1}{<-}{->},postaction={decorate},line width=0.08ex](0,0) -- (1.6em,0);}
    \hspace{-0.41ex}}
}
\newcommand{\vcirc}{\mathbin{\star}}
\newcommand{\vdom}{\mathrm{dom}_{\vlabel}}
\newcommand{\vcod}{\mathrm{cod}_{\vlabel}}
\DeclareMathOperator{\Retract}{\mathrm{Ret}}
\newcommand{\projret}{e_{\mathsf{0}}}
\newcommand{\projobj}{e_{\mathsf{1}}}
\DeclareMathOperator{\CodRet}{\mathrm{CodRet}}
\DeclareSymbolFont{arrows3}{LS2}{stixtt}{m}{n}
\DeclareMathSymbol{\lrtriangle}{\mathord}{arrows3}{"9E}
\DeclareMathSymbol{\lltriangle}{\mathord}{arrows3}{"99}
\NewDocumentCommand{\leftcnx}{s}{\scalerel*{\IfBooleanTF{#1}{\lltriangle}{\lrtriangle}}{\Delta}}
\NewDocumentCommand{\DomClass}{m m}{\Class{#1}{#2}{\cong}}
\NewDocumentCommand{\DClass}{m m m}{\VertArr{#1}(\tfrac{#2}{#3})}
\NewDocumentCommand{\Class}{m m m}{\Arr{#1}(\tfrac{#2}{#3})}
\NewDocumentCommand{\CodClass}{m m}{\Class{#1}{\cong}{#2}}
\NewDocumentCommand{\LevelClass}{m m}{\Class{#1}{#2}{#2}}
\NewDocumentCommand{\DCodClass}{m m}{\DClass{#1}{\cong}{#2}}
\newcommand{\Cof}{\mathbb{F}}
\newcommand{\CofT}{1}
\newcommand{\TCof}[2]{\mathrm{TCof_p}(#1,#2)}
\newcommand{\DTCof}[2]{\mathrm{\mathbb{T}Cof_p}(#1,#2)}
\newcommand{\Fib}[2]{\mathrm{Fib}(#1,#2)}
\newcommand{\boxDiagram}[2]{\mathsf{box}^{#1}_{#2}}
\newcommand{\ival}{\mathbb{I}}
\newcommand{\LCMono}[1]{\class{M}_{\mathrm{lc}}}
\newcommand{\CartLCMono}[1]{#1^{\mathrm{lc}\mono}_{\mathrm{cart}}}
\NewDocumentCommand{\Ext}{m}{\mathrm{Ext}_{#1}}
\NewDocumentCommand{\DExt}{m}{\mathrm{\mathbb{E}xt}_{#1}}
\newcommand{\corr}[2]{\mathrm{pull}_{#1}{#2}}
\newcommand{\lift}[2]{\mathrm{lift}_{#1}{#2}}
\newcommand{\ranob}[2]{\prod_{#1}{#2}}
\newcommand{\rancob}[2]{\overline{\prod}_{#1}{#2}}
\newcommand{\ranc}[1]{#1_{\overline{*}}}
\newcommand{\FullCartArr}[1]{{#1}^{\dashrightarrow}}
\title{The algebraic small object argument as a saturation}
\author{
  Evan Cavallo\footremember{cgu}{University of Gothenburg and Chalmers University of Technology, Gothenburg, Sweden} \\
  \href{mailto://evan.cavallo@gu.se}{\texttt{evan.cavallo@gu.se}}
  \and
  Christian Sattler\footrecall{cgu} \\
  \href{mailto://sattler@chalmers.se}{\texttt{sattler@chalmers.se}}
}
\date{October 2025}
\begin{document}

\maketitle

\begin{abstract}
  We analyze the structure of left maps in algebraic weak factorization systems constructed using Garner's algebraic small object argument.
  We find that any left map can be constructed from generators in Bourke and Garner's double category of left maps by operations that parallel the classical cell-complex-forming operations of Quillen's small object argument (coproducts, cobase changes, transfinite composites, and retracts).
  Our main theorems are phrased as ``saturation'' principles, which express the closure conditions necessary for a given property or structure to extend from generators to all left maps.
  The core of the argument is an analysis of the construction of the free monad on a pointed endofunctor.
\end{abstract}

\setcounter{tocdepth}{2}
\tableofcontents

\section{Introduction}
\label{sec:intro}

Quillen's \emph{small object argument} \cite{quillen:67} builds, under some conditions, a weak factorization system (\textsc{wfs}) in a category $\cat{E}$ from a generating set of left maps in $\cat{E}$.
Given such a set $S \subseteq \Arr{\cat{E}}$, the class $\class{R} \defeq \rlp{S}$ of maps right lifting against $S$ and the class $\class{L} \defeq \llp{\class{R}}$ of maps left lifting against $\class{R}$ constitute a \textsc{wfs} $(\class{L},\class{R})$ exactly if every map factors as one in $\class{L}$ followed by one in $\class{R}$, in which case one says $(\class{L},\class{R})$ is \emph{cofibrantly generated by $S$}.
The small object argument constructs these factorizations.

\Citeauthor{garner:09}'s \emph{algebraic small object argument} \cite{garner:09} is a refinement of Quillen's argument that generalizes it along two axes: it takes a diagram of generators $u \co \cat{J} \to \Arr{\cat{E}}$ as input---as opposed to a set, which is the special case where $\cat{J}$ is discrete---and produces an \emph{algebraic weak factorization system} (\textsc{awfs}) as output.
A \textsc{wfs} is \emph{generated by a diagram} $u \co \cat{J} \to \Arr{\cat{E}}$ if its right maps are those $f$ equipped with a choice of filler for each lifting problem $\alpha \co u_i \to f$ such that for every $t \co j \to i$ in $\cat{J}$ and $\alpha \co u_i \to f$, the triangle
\begin{equation}
  \label{generating-diagram-coherence}
  \begin{tikzcd}[row sep=large]
    A_j \ar[phantomcenter]{dr}{u_t} \ar{d} \ar{r} & A_i \ar[phantomcenter]{dr}{\alpha} \ar{d} \ar{r} & X \ar{d}{f} \\
    B_j \ar[dashed,bend right=10]{urr} \ar{r} & B_i \ar[dashed,bend right=15]{ur} \ar{r} & Y
  \end{tikzcd}
\end{equation}
formed by the solutions for $\alpha \co u_i \to f$ and $\alpha u_t \co u_j \to f$ commutes.
This generalization captures more \textsc{wfs}'s of interest \cite[\S4.3]{riehl:11}~\cite[Example 2.6]{rosicky:17}, in particular \textsc{wfs}'s used in constructive interpretations of homotopy type theory \cite{swan:16,gambino-sattler:17,cavallo-mortberg-swan:20,awodey:23,accrs:24,cavallo-sattler:23}.
With an \emph{algebraic} \textsc{wfs}, one has \emph{categories} of left and right maps with useful properties such as closure under small colimits and limits respectively \cite{grandis-tholen:06}.

The left factor of a factorization produced by Quillen's argument is, by construction, a \emph{cell complex}: a transfinite composite of cobase changes (\ie, pushouts along arbitrary maps) of small coproducts of generating maps.
Defining a \emph{cellular class of maps} to be one closed under transfinite composition, cobase change, small coproducts, it follows that any cellular class containing the generators contains the left factor of every chosen factorization.
Furthermore, any left map can be written as a codomain retract of the left factor of a factorization \cite[Lemma 1.1.9]{hovey:99}.
Thus the class of left maps has a useful universal property: defining a \emph{saturated class of maps} to be a cellular class closed under codomain retracts, the class of left maps is the least saturated class containing the generators.

The algebraic small object argument does not come with an obvious equivalent for this story.
Even if we start from a discrete diagram of generators, it is not clear how to extract a cell complex decomposition from Garner's construction \cite[\S6.4]{garner:07}.
For a non-discrete diagram, it is not even clear what the notion of cell complex should be.

We contribute an analogue to the concept of saturated class of maps for \textsc{awfs}'s and show that the category of left maps of a cofibrantly generated \textsc{awfs} is ``least'' among the ``saturated classes'' containing the diagram that generates it.

\subsection{Results}
\label{sec:intro:results}

A \emph{functorial factorization} on a category $\cat{E}$ is a pair of functors $L,R \co \Arr{\cat{E}} \to \Arr{\cat{E}}$ such that $\dom L = \dom$, $\cod L = \dom R$, $\cod R = \cod$, and $Rf \circ Lf = f$ for each $f \in \Arr{\cat{E}}$.
An \emph{algebraic weak factorization system} or \emph{\textsc{awfs}} on $\cat{E}$ \cite{grandis-tholen:06,garner:09} is a functorial factorization $(L,R)$ equipped with extensions of the copointed and pointed endofunctors $(L,\Phi)$ and $(R,\Lambda)$ defined by the squares
\[
  \begin{tikzcd}
    X \ar[phantom]{dr}{\Phi_f} \ar{d}[left]{Lf} \ar[equals]{r} & X \ar{d}{f} \\
    Ef \ar{r}[below]{Rf} & Y
  \end{tikzcd}
  \qquad
  \text{and}
  \qquad
  \begin{tikzcd}
    X \ar[phantom]{dr}{\Lambda_f} \ar{d}[left]{f} \ar{r}{Lf} & Ef \ar{d}{Rf} \\
    Y \ar[equals]{r} & Y
  \end{tikzcd}
\]
to a comonad $\comonad{L} = (L,\Phi,\Sigma)$ and monad $\monad{R} = (R,\Lambda,\Pi)$ related by a distributive law \cite{beck:69}.
An \textsc{awfs} has an underlying \textsc{wfs} whose left maps are the coalgebras for the copointed endofunctor $\UnderPtd{\comonad{L}} = (L,\Phi)$ and whose right maps are the algebras for the pointed endofunctor $\UnderPtd{\monad{R}} = (R,\Lambda)$ {\cite[Remark 2.17]{garner:09}~\cite[Remark 2.11]{riehl:11}}.

Given an \textsc{awfs} on $\cat{E}$, \textcite{bourke-garner:16} describe a double category $\DCoalg{\UnderPtd{\comonad{L}}}$ whose objects and horizontal morphisms are those of $\cat{E}$ and whose category of vertical morphisms is the category $\Coalg{\UnderPtd{\comonad{L}}}$ of left maps. 
It lives over $\cat{E}$ via the double functor $U \co \DCoalg{\UnderPtd{\comonad{L}}} \to \Sq{\cat{E}}$ (to the ``double category of squares'' on $\cat{E}$ whose horizontal and vertical morphisms are both the morphisms of $\cat{E}$) that forgets the left structure on vertical morphisms.
Abstracting from this situation, we define a \emph{notion of composable structure on $\cat{E}$} (\cref{notion-of-composable-structure}) to be a pseudo double category $U \co \dcat{A} \to \Sq{\cat{E}}$ over $\Sq{\cat{E}}$ whose horizontal morphisms are those of $\cat{E}$ and whose vertical morphism projection $\VertArr{U} \co \VertArr{\dcat{A}} \to \Arr{\cat{E}}$ is a conservative isofibration.

A notion of composable structure $U \co \dcat{A} \to \Sq{\cat{E}}$ will be \emph{cellular} when its category of vertical morphisms $\VertArr{\dcat{A}}$ is closed under certain pushouts and sequential colimits.
Given a diagram of generators $u \co \cat{J} \to \Arr{\cat{E}}$, its \emph{density comonad} $\Den{u} \co \Arr{\cat{E}} \to \Arr{\cat{E}}$ \cite{kock:66} is the pointwise left Kan extension of $u$ along itself, defined at $f \in \Arr{\cat{E}}$ by
\begin{equation}
  \label{density-comonad-colimit}
  \Den{u}(f) \defeq \colim \left(\begin{tikzcd}[cramped] \Comma{u}{f} \ar{r}{\proj{}} & \cat{J} \ar{r}{u} & \Arr{\cat{E}}\end{tikzcd} \right) \rlap{.}
\end{equation}
When $\cat{J}$ is discrete, $\Den{u}(f)$ is the coproduct $\coprod_{i \in \cat{J}, \alpha \co u_i \to f} u_i$, and in general it plays an analogous role to the coproducts in Quillen's argument.
Importantly, it can be effectively characterized in our examples of interest (\cref{sec:uniform-fibrations}).

Our first main theorem turns any lift of $\Den{u}$ through a cellular notion of composable structure $\dcat{A}$ into a corresponding of the left factor functor $L$ of the \textsc{awfs} generated from $u$.
To be precise:

\begin{manualtheorem}{{\ref{soa-unit-vertical-point}}}
  Fix a category $\cat{E}$ equipped with a $\kappa$-backdrop $\class{M}$, a diagram $u \co \cat{J} \to \Arr{\cat{E}}$ compatible with $\class{M}$ such that $\dom u$ is levelwise $(\kappa,\class{M})$-small, and a left-connected, $(\kappa,\class{M})$-cellular notion of composable structure $U \co \dcat{A} \to \Sq{\cat{E}}$.
If $(\comonad{L},\monad{R})$ is the \textsc{awfs} cofibrantly generated by $u$, then any lift $\bm{D}_{\dcat{A}} \co \Arr{\cat{E}} \to \VertArr{\dcat{A}}$ of $\Den{u}$ through $\VertArr{U}$ induces lifts $\bm{L}_{\dcat{A}}$ of $L$ and $\Gg \co \bm{D}_{\dcat{A}} \to \bm{L}_{\dcat{A}}$ of $\oneStep{u} \co \Den{u} \to L$ through $\VertArr{U}$, as in the diagram
\[
  \begin{tikzcd}[sep=huge]
    \Arr{\cat{E}} \ar[equals]{d} \ar[bend left=20]{r}[name=vlan]{\bm{D}_{\dcat{A}}} \ar[dashed,bend right=20]{r}[below,name=vcomonad]{\bm{L}_{\dcat{A}}} & \VertArr{\dcat{A}} \ar{d}{\VertArr{U}} \arrow[Rightarrow,dashed,from=vlan,to=vcomonad,shorten=1mm,"\Gg"] \\
    \Arr{\cat{E}} \ar[bend left=20]{r}[name=lan]{\Den{u}} \ar[bend right=20]{r}[below,name=comonad]{L} & \Arr{\cat{E}}
    \arrow[Rightarrow,from=lan,to=comonad,shorten=1mm] \rlap{.}
  \end{tikzcd}
\]


\end{manualtheorem}

\Cref{soa-unit-vertical-point} is a corollary of a more technical result, \cref{soa-unit-vertical-point-abstract}, that expresses a universal property of the outputs.
The backdrop parameter $\class{M}$, a wide subcategory of $\cat{E}$, is new relative to the classical story and controls the kind of colimits required in $\cat{E}$ and $\VertArr{\dcat{A}}$.
We expand on the intuition and motivation for this setup in \cref{sec:introduction:approach}.
For now, we summarize the definitions:
\begin{itemize}
\item a wide subcategory $\class{M} \hookrightarrow \cat{E}$ is a \emph{$\kappa$-backdrop} if
  $\class{M}$ has colimits of $(1 + \alpha)$-chains for $\alpha < \kappa$ preserved by $\class{M} \hookrightarrow \cat{E}$, $\kappa$-chains in $\class{M}$ have colimits in $\cat{E}$, and
  $\class{M}$ is closed under cobase change (\cref{backdrop});
\item $A \in \cat{E}$ is \emph{$(\kappa,\class{M})$-small} if $\Hom{\cat{E}}{A}{-}$ preserves colimits of $\kappa$-chains in $\class{M}$ (\cref{small-object});
\item $u$ is \emph{compatible with $\class{M}$} if $\Den{u} \co \Arr{\cat{E}} \to \Arr{\cat{E}}$ is valued in $\class{M}$ and its pushout application $\leibpushapp{\Den{u}} \co \Arr{(\Arr{\cat{E}})} \to \Arr{\cat{E}}$ sends squares levelwise in $\class{M}$ to morphisms in $\class{M}$ (\cref{compatible});
\item $U \co \dcat{A} \to \Sq{\cat{E}}$ is \emph{$(\kappa,\class{M})$-cellular} if the wide subcategory $\DCodClass{\dcat{A}}{\class{M}} \hookrightarrow \VertArr{\dcat{A}}$ of squares with upper horizontal map an isomorphism and lower horizontal map in $\class{M}$ is closed under cobase change in $\VertArr{\dcat{A}}$, and $(1 + \alpha)$-chains in $\DCodClass{\dcat{A}}{\class{M}}$ have colimits in $\VertArr{\dcat{A}}$ for $\alpha \preceq \kappa$ (\cref{cellular}).
\end{itemize}
If $\cat{E}$ is cocomplete, one can always take $\class{M} = \cat{E}$, but choosing a smaller backdrop gives a stronger result; often we take $\class{M}$ to be a class of monomorphisms.

Our second main theorem, derived from the first, shows that if $U \co \dcat{A} \to \Sq{\cat{E}}$ also functorially lifts codomain retracts (\cref{lifts-codomain-retracts}) then we can assign an $\dcat{A}$-structure to every left map:

\begin{manualtheorem}{{\ref{soa-copointed-coalgebras-functor}}}
  Fix a category $\cat{E}$ equipped with a $\kappa$-backdrop $\class{M}$, a diagram $u \co \cat{J} \to \Arr{\cat{E}}$ compatible with $\class{M}$ such that $\dom u$ is levelwise $(\kappa,\class{M})$-small, and a left-connected, $(\kappa,\class{M})$-cellular notion of composable structure $U \co \dcat{A} \to \Sq{\cat{E}}$ with a codomain retract lifting operator.
If $(\comonad{L},\monad{R})$ is the \textsc{awfs} cofibrantly generated by $u$, then any lift $\bm{D}_{\dcat{A}} \co \Arr{\cat{E}} \to \VertArr{\dcat{A}}$ of $\Den{u} \co \Arr{\cat{E}} \to \Arr{\cat{E}}$ through $\VertArr{U}$ induces a functor $j \co \Coalg{\UnderPtd{\comonad{L}}} \to \VertArr{\dcat{A}}$ fitting in the diagram
\[
  \begin{tikzcd}[sep=small]
    \Coalg{\UnderPtd{\comonad{L}}} \ar[dashed]{rr}{j} \ar{dr}[below left]{U_{\UnderPtd{\comonad{L}}}} && \VertArr{\dcat{A}} \ar{dl}{\VertArr{U}} \rlap{.} \\
    & \Arr{\cat{E}}
  \end{tikzcd}
\]


\end{manualtheorem}

If the retract lifting operator interacts suitably with composition of left maps, we can even upgrade $j$ to a pseudo double functor from the double category of left maps to $\dcat{A}$ (\cref{soa-copointed-coalgebras-double-functor}), although we prove this only in the case that $\dcat{A}$ is a \emph{thin} pseudo double category.

Implicit in the above is the existence of the cofibrantly generated \textsc{awfs} under the given hypotheses, which we prove first:

\begin{manualtheorem}{{\ref{soa-backdrop}}}
  Let $\cat{E}$ be a category with a $\kappa$-backdrop $\class{M}$ and let $u \co \cat{J} \to \Arr{\cat{E}}$ be a diagram compatible with $\class{M}$ such that $\dom u$ is levelwise $(\kappa,\class{M})$-small.
Then there is an \textsc{awfs} $(\comonad{L},\monad{R})$ cofibrantly generated by $u$.


\end{manualtheorem}

To prove \cref{soa-backdrop}, we use \citeauthor{bourke-garner:16}'s proof \cite[Theorem 6]{bourke-garner:16} that to cofibrantly generate an \textsc{awfs} $(\comonad{L},\monad{R})$ on a diagram $u$, it suffices to show that a certain pointed endofunctor on $\Arr{\cat{E}}$ (\cref{one-step-endofunctor}) admits a free and algebraically free monad, which then becomes the monad $\monad{R}$ of the \textsc{awfs}.
We instantiate this result with a free monad construction due to \textcite[\S5 and \S14]{kelly:80}.
In \cref{sec:free-monad-sequence} we analyze the functoriality of this construction, formulating it as a functor from a category $\ConfP{\kappa}$ of configurations (\cref{pointed-configurations}), whose objects $(\cat{E},\class{M},\ptdendo{T}) \in \ConfP{\kappa}$ are suitable pointed endofunctors $\ptdendo{T} = (T,\tau)$ on a category $\cat{E}$ equipped with a $\kappa$-backdrop $\class{M}$, to a category $\Mnd_s$ whose objects $(\cat{E},\monad{M}) \in \Mnd_s$ are monads $\monad{M}$ on a category $\cat{E}$:

\begin{manualtheorem}{{\ref{pointed-free-monad}}}
  The projection $\ConfP{\kappa} \to \Cat$ lifts to a functor $\ConfP{\kappa} \to \Mnd_s$ sending $(\cat{E}, \class{M}, \ptdendo{T})$ to the free and algebraically free monad on $\ptdendo{T}$.


\end{manualtheorem}

We do not assume $\cat{E}$ is cocomplete, showing that the construction only requires the colimits provided by the $\kappa$-backdrop $\class{M}$ under the assumptions that $\tau$ is valued in $\class{M}$ and that its pushout application $\leibpushapp{\tau}$ preserves $\class{M}$.
The functoriality in \cref{pointed-free-monad} tells us that any strong morphism between pointed endofunctors preserving these colimits induces a strong morphism between their free monads.
This observation comes from previously circulated but unpublished work of the second author \cite{sattler-free-monad}.
This functoriality is key to our proof of \cref{soa-unit-vertical-point}.

We describe two applications of our saturation theorems in \cref{sec:applications}.
To motivate these, we first recall a concrete family of \textsc{awfs}'s generated by diagrams: \textsc{awfs}'s for \emph{uniform box-filling fibrations}, which are used in the semantics of homotopy type theory \cite{gambino-sattler:17}.
We show here how the density comonad for a given diagram of generators can be calculated in practice.
Fixing a notion of configuration for a uniform fibration \textsc{awfs} (\cref{uniform-fibration-configuration}), we use \cref{soa-backdrop} to prove:

\begin{manualtheorem}{{\ref{uniform-fibrations}}}
  Let $(t,I)$ be a uniform fibration configuration in a presheaf category $\cat{E} = \PSh{\cat{C}}$.
If either
\begin{enumerate*}[itemjoin=\ ]
\item\label{uniform-fibrations:monos}the monomorphisms form a $\kappa$-backdrop in $\cat{E}$ and $t$ is locally $(\kappa,\text{mono})$-small, or
\item\label{uniform-fibrations:finitary}$(t,I)$ is finitary,
\end{enumerate*}
then the uniform fibration \textsc{awfs} on $(t,I)$ exists.


\end{manualtheorem}

The first condition \ref{uniform-fibrations:monos} is always satisfied in classical set theory with choice.
In the field of semantics of homotopy type theory, there is an interest in obtaining such \textsc{awfs}'s constructively; condition \ref{uniform-fibrations:finitary} (\cref{finitary-uniform-fibration-configuration}) offers a route for a more limited class of configurations that can be applied in constructive metatheories, even those without general quotients.
This is made possible by our refinement of the algebraic small object argument to non-cocomplete settings.
For the interested reader, we comment on the constructivity of our results in \cref{sec:metatheory}.

Our first application is a relativization of \cref{soa-copointed-coalgebras-functor,soa-copointed-coalgebras-double-functor}:

\begin{manualtheorem}{{\ref{preimage-coalgebras-functor}}}
  Fix a $\kappa$-backdrop-preserving functor $F \co (\cat{E},\class{M}) \to (\cat{F},\class{N})$ and diagram $u \co \cat{J} \to \Arr{\cat{E}}$ compatible with $\class{M}$ such that $\dom u$ is levelwise $(\kappa,\class{M})$-small.
Write $(\comonad{L},\monad{R})$ for the \textsc{awfs} cofibrantly generated by $u$.
For a left-connected, $(\kappa,\class{N})$-cellular notion of composable structure $U \co \dcat{A} \to \Sq{\cat{F}}$ with a codomain retract lifting operator, any lift
\[
  \begin{tikzcd}[row sep=large]
    \Arr{\cat{E}} \ar[dashed]{r} \ar{d}[left]{\Den{u}} & \VertArr{\dcat{A}} \ar{d}{\VertArr{U}} \\
    \Arr{\cat{E}} \ar{r}[below]{\Arr{F}} & \Arr{\cat{F}}
  \end{tikzcd}
\qquad
\text{induces a functor}
\qquad
  \begin{tikzcd}[row sep=large]
    \Coalg{\UnderPtd{\comonad{L}}} \ar{d}[left]{U_{\UnderPtd{\comonad{L}}}} \ar[dashed]{r} & \VertArr{\dcat{A}} \ar{d}{\VertArr{U}} \\
    \Arr{\cat{E}} \ar{r}[below]{\Arr{F}} & \Arr{\cat{F}} \rlap{.}
  \end{tikzcd}
\]
If $\dcat{A}$ is thin and its codomain retract lifting operator is compositional, then said functor extends to a pseudo double functor $\DCoalg{\UnderPtd{\comonad{L}}} \to \dcat{A}$ over $\Sq{\cat{E}} \to \Sq{\cat{F}}$.


\end{manualtheorem}

This corresponds to a typical use of the classical saturation principle, namely showing that a functor $F \co \cat{E} \to \cat{F}$ sends left maps in $\cat{E}$ into some class of maps in $\cat{F}$.
Often $U \co \dcat{A} \to \Sq{\cat{F}}$ is the double category of left maps for a second \textsc{awfs} on $\cat{F}$; such a double category always satisfies the left-connectedness and cellularity hypotheses.

For our second application, we look at \emph{extension operations}, operations that take some structure on the domain of a left map and extend it to the codomain.
Concretely, we are interested in showing that certain Quillen premodel categories \cite{barton:19} satisfy the \emph{fibration extension property}, the property that any fibration $X \fib A$ can be extended along any trivial cofibration $A \tcof B$ to produce a fibration over $B$ fitting in a pullback square
\[
  \begin{tikzcd}
    X \pullback \ar[fib]{d} \ar[dashed]{r} & Y \ar[dashed,fib]{d} \\
    A \ar{r}[below]{m} & B \rlap{.}
  \end{tikzcd}
\]
This property can be used to show that a premodel category is in fact a Quillen model category (see, \eg, \cite[\S3.3]{cavallo-sattler:23}).
We consider more generally a category $\cat{E}$ equipped with a Grothendieck fibration $P \co \cat{F} \to \cat{E}$; for the motivating example, the fiber of $P$ at $A \in \cat{E}$ is the category of fibrations into $A$.
Given $P$, we define a notion of composable structure $U_P \co \Ext{P} \to \Arr{\cat{E}}$ (\cref{extension-double-category}) for which a structure on a morphism $m \co A \to B$ is a section of the reindexing functor $\subst{m} \co \cat{F}_B \to \cat{F}_A$, then use \cref{soa-copointed-coalgebras-functor} to show:

\begin{manualtheorem}{{\ref{lift-extension-operation}}}
  Let $\cat{E}$ be a category with a $\kappa$-backdrop $\class{M}$ and a diagram $u \co \cat{J} \to \Arr{\cat{E}}$ compatible with $\class{M}$ such that $\dom u$ is levelwise $(\kappa,\class{M})$-small.
Write $(\comonad{L},\monad{R})$ for the \textsc{awfs} cofibrantly generated by $u$.
Let $P \co \cat{F} \to \cat{E}$ be a Grothendieck fibration such that
\begin{enumerate}
\item colimits of $(1+\alpha)$-chains in $\class{M}$ are Van Kampen for $P$ for $\alpha \preceq \kappa$;
\item cobase changes of maps in $\class{M}$ are Van Kampen for $P$.
\end{enumerate}
Any lift of $\Den{u} \co \Arr{\cat{E}} \to \Arr{\cat{E}}$ through $U_P \co \Ext{P} \to \Arr{\cat{E}}$ induces a functor $\Coalg{\UnderPtd{\comonad{L}}} \to \Ext{P}$ over $\Arr{\cat{E}}$ assigning an extension operation to every left map of the \textsc{awfs}.


\end{manualtheorem}

\subsection{Intuition}
\label{sec:introduction:approach}

The reduction of the algebraic small object argument to a free monad construction is useful for abstract reasoning, but it can obscure the intuition behind our saturation theorems \ref{soa-unit-vertical-point}, \ref{soa-copointed-coalgebras-functor}, and \ref{soa-copointed-coalgebras-double-functor}.
To motivate them, we first review Quillen's small object argument, then unfold Garner's and compare.

\subsubsection{Quillen's small object argument}

Given a set of generating left maps $S \subseteq \Arr{\cat{E}}$, the small object argument constructs a factorization of $f \co X \to Y$ by iteratively attaching new ``cells'' to $X$, eventually arriving at a map $f \co X' \to Y$ that lifts against $S$ together with a comparison left map $m \co X \to X'$ over $Y$. In the first stage of the iteration, one takes each lifting problem $\alpha \co u \to f$ against a map $u \co A \to B$ in $S$ and glues a copy of the codomain $B$ onto the existing copy of the domain $A$ in $X$, defining a new object $X_1$:

\begin{equation}
  \label{quillen-soa-one-step}
  \begin{tikzcd}[column sep=small]
    \coprod_{\substack{(u \co A \to B) \in S \\ \alpha \co u \to f}} A \ar{d} \ar{r} & \coprod_{\substack{(u \co A \to B) \in S \\ \alpha \co u \to f}} B \ar[dashed]{d} \ar[bend left]{dd} \\
    X \ar[bend right=15]{dr}[below left,pos=.2]{f} \ar[dashed]{r}{m_1} & \pushout X_1 \ar[dashed]{d}[left]{f_1} \\[1em]
    & [-3em] Y
  \end{tikzcd}
\end{equation}

Any class defined by left lifting is closed under small coproducts and cobase change, so $m_1$ is a left map.
While $f_1$ has a solution for every lifting problem $\alpha \co u \to f_1$ that factors through $f \to f_1$, it need not itself be a right map.
Thus one iterates, producing a transfinite sequence $f \to f_1 \to f_2 \to \cdots$.
The sequence does not typically converge, but the approximations do become right maps at some ordinal index $\kappa$ given compactness assumptions on the domains of the maps in $S$ (the titular ``small objects'').
The composite
\[
  \begin{tikzcd}
    X \ar{r}{m_1} & X_1 \ar{r}{m_2} & X_2 \ar{r} & \cdots \ar{r} & X_\kappa
  \end{tikzcd}
\]
is a left map, since any class defined by left lifting is closed under transfinite composition, and so $X \to X_\kappa \to Y$ is the desired factorization.
By construction, the left factors of the factorizations are cell complexes built from maps in $S$.

\subsubsection{Garner's algebraic small object argument}
\label{sec:introduction:algebraic-soa}

Garner's argument follows the same blueprint as Quillen's, but modifies both the one-step approximation construction and the process of iteration.

The one-step factorization \eqref{quillen-soa-one-step} of Quillen's argument has a natural generalization to the diagram case.
Rather than attaching a coproduct of all lifting problems against $f$, one attaches the colimit described by the density comonad for $u$ \eqref{density-comonad-colimit}:
\[
  \begin{tikzcd}[column sep=huge]
    \placeholder \ar{d} \ar{r}{\Den{u}(f)} & \placeholder \ar[dashed]{d} \ar[bend left]{dd} \\
    X \ar[bend right=15]{dr}[below left,pos=.2]{f} \ar[dashed]{r}{m_1} & \pushout X_1 \ar[dashed]{d}[left]{f_1} \\[1em]
    & [-3em] Y \rlap{.}
  \end{tikzcd}
\]
Here the outer square $\Den{u}(f) \to f$ is the counit of the comonad.
This is the same construction as Quillen's when $\cat{J}$ is discrete; in the general case, it ensures that the attached solutions satisfy the required coherences \eqref{generating-diagram-coherence}.

Naively iterating this one-step factorization would produce multiple solutions to each problem, and solutions added at different stages would not satisfy the required coherences with respect to each other.
One instead iterates \emph{while quotienting to identify solutions that are added multiple times}.
Besides being necessary to handle non-discrete generating diagrams, this change also conceptual benefits: it allows the sequence of approximations to actually converge, and the full construction can be described as applying the algebraically free monad on the pointed endofunctor on $\Arr{\cat{E}}$ that sends $f$ to its one-step factorization $f_1$.
Indeed, the monad $\monad{R}$ of the \textsc{awfs} $(\comonad{L},\monad{R})$ generated by Garner's argument is precisely this algebraically free monad.

We want to extract a notion of cell complex from this construction.
Something has to change: the left factor $Lf$ of a factorization is still a transfinite composite of step maps, as in Quillen's argument, but now the step maps need not be left maps.
The following contrived example illustrates this:

\begin{example}
  \label{backdrop-not-left-maps}
  Consider the diagram $u \co \cat{J} \to \Arr{\Set}$ where $\cat{J} = \braces{\mathsf{b} \to \mathsf{a} \leftarrow \mathsf{b'}}$ is the walking cospan and $u$ is the mapping
  \begin{equation}
    \label{set-bad-transition-diagram}
    \begin{tikzcd}
      \mathsf{b} \ar[mapsto,shorten=.7em]{d} \ar{r} & \mathsf{a} \ar[mapsto,shorten=.7em]{d} & \ar{l} \mathsf{b'} \ar[mapsto,shorten=.7em]{d} \\[.5em]
      0 \ar{d} \ar{r} & 1 \ar{d}{\inl} & \ar{l} 0 \ar{d} \\
      1 \ar{r}[below]{\inr} & 1 \sqcup 1 & \ar{l}{\inr} 1 \rlap{}
    \end{tikzcd}
  \end{equation}
  where $\inl,\inr \co 1 \to 1 \sqcup 1$ are the two coprojections.
  This diagram generates the same \textsc{wfs} as the single arrow $0 \to 1$, namely the (complemented mono, split epi) \textsc{wfs} on $\Set$.

  Now we examine Garner's small object argument applied to factorize $f \co 0 \to 1$.
  In the first step, we insert solutions to one lifting problem against $u(\mathsf{b})$ and one against $u(\mathsf{b'})$; there are no lifting problems against $u(\mathsf{a})$ since the domain of $f$ is empty.
  We thus have
  \[
    \begin{tikzcd}
      0 \ar{d}[left]{f} \ar[dashed]{r}{m_1} & 1 \sqcup 1 \ar[dashed]{d}[left]{f_1} \\
      1 \ar[equals]{r} & 1 \rlap{.}
    \end{tikzcd}
  \]
  at the first stage. There are four lifting problems for $u$ against $f_1$:
  \[
    \begin{tikzcd}[column sep=small]
      0 \ar{d}[left]{u(\mathsf{b})} \ar{r}{!} & 1 \sqcup 1 \ar{d}{f_1} \\
      1 \ar{r}[below]{!} & 1
    \end{tikzcd}
    \qquad
    \begin{tikzcd}[column sep=small]
      0 \ar{d}[left]{u(\mathsf{b'})} \ar{r}{!} & 1 \sqcup 1 \ar{d}{f_1} \\
      1 \ar{r}[below]{!} & 1
    \end{tikzcd}
    \qquad
    \begin{tikzcd}[column sep=small]
      1 \ar{d}[left]{u(\mathsf{a})} \ar{r}{\inl} & 1 \sqcup 1 \ar{d}{f_1} \\
      1 \sqcup 1 \ar{r}[below]{!} & 1
    \end{tikzcd}
    \qquad
    \begin{tikzcd}[column sep=small]
      1 \ar{d}[left]{u(\mathsf{a})} \ar{r}{\inr} & 1 \sqcup 1 \ar{d}{f_1} \\
      1 \sqcup 1 \ar{r}[below]{!} & 1
    \end{tikzcd}
  \]
  For each of these problems, we add a point to the two-step factorization.
  Solutions to the first two problems were already added in the first step, so the new solutions are equated with the existing ones in $f_1$.
  Moreover, the morphisms of \eqref{set-bad-transition-diagram} identify the point added for the third problem  with those added for the first and second problems; likewise for the fourth problem.
  The two-step factorization is hence a \emph{quotient} of the one-step factorization in which the points added in the first step are equated:
  \[
    \begin{tikzcd}
      0 \ar{d}[left]{f} \ar{r}{m_1} & 1 \sqcup 1 \ar{d}[left]{f_1} \ar[dashed]{r}{m_2} & 1 \ar[dashed]{d}[left]{f_2} \\
      1 \ar{r}[below]{!} & 1 \ar{r}[below]{!} & 1 \rlap{.}
    \end{tikzcd}
  \]
  Garner's argument converges at this stage.
  In contrast to Quillen's argument, the transition map between the $n$-step and $(n+1)$-step factorization need not belong to the left class: $m_2 \co 1 \sqcup 1 \to 1$ is not a monomorphism.
\end{example}

We thus put aside the step maps $X_\alpha \to X_{\alpha + 1}$ and instead look at the composites $m_{\le \alpha} \co X \to X_{\alpha}$, which \emph{do} belong to the left class.
We can write the left factor of Garner's factorization as their colimit in $\Arr{\cat{E}}$:
\begin{equation}
  \label{left-factor-transfinite-composite}
  \begin{tikzcd}
    X \ar[equals]{d} \ar[equals]{r} & X \ar{d}{m_{\le 1}} \ar[equals]{r} & X \ar{d}{m_{\le 2}} \ar[equals]{r} & \cdots \ar[equals]{r} & X \ar{d}{\colim_{\alpha < \kappa} m_{\le \alpha} \eqdef Lf} \\
    X \ar{r}[below]{m_1} & X_1 \ar{r}[below]{m_2} & X_2 \ar{r}[below]{m_3} & \cdots \ar{r} & X_{\kappa}
  \end{tikzcd}
\end{equation}
While the \emph{class} of left maps of an \textsc{awfs} $(\comonad{L},\monad{R})$ is not generally closed under such colimits, the \emph{category} of $\comonad{L}$-coalgebras is (indeed, it has all small colimits), and  in fact \eqref{left-factor-transfinite-composite} lifts to a diagram in $\Coalg{\comonad{L}}$: thanks to the quotienting, we solve lifts against $m_{\le \alpha}$ and $m_{\le \beta}$ in the same way on their overlap.
It turns out that the cobase change involved in defining $m_{\le \alpha+1}$ from $m_\alpha$ can similarly be described as a pushout in $\Coalg{\comonad{L}}$.
These are the two kinds of colimit we require of a cellular notion of composable structure.

Finally, while the step maps in Garner's argument are not always left maps of the generated \textsc{awfs}, we can often say \emph{something} about them.
In our target examples of \cref{sec:uniform-fibrations}, for instance, they are always monomorphisms.
In such cases, we can more precisely delimit the colimits we need in Garner's construction.
This is the role of the backdrop $\class{M}$.
Ultimately, the argument requires colimits chains of $\comonad{L}$-coalgebras of the form
\[
  \begin{tikzcd}[column sep=large]
    A \ar[verto]{d} \ar[equal]{r} & A \ar[verto]{d} \ar[equal]{r} & A \ar[verto]{d} \ar[equal]{r} & \cdots \\
    B_0 \ar{r}[below]{\in \class{M}} & B_1 \ar{r}[below]{\in \class{M}} & B_2 \ar{r}[below]{\in \class{M}} & \cdots
  \end{tikzcd}
\]
(up to some ordinal bound) and pushouts of spans of the form
\[
  \begin{tikzcd}[column sep=large]
    A_0 \ar[verto]{d} & \ar{l} A \ar[verto]{d} \ar[equal]{r} & A \ar[verto]{d} \\
    B_0 & \ar{l} B \ar{r}[below]{\in \class{M}} & B_1 \rlap{.}
  \end{tikzcd}
\]
For these colimits to suffice for some generating diagram $u \co \cat{J} \to \Arr{\cat{E}}$, we require (\cref{compatible}) that $\Den{u} \co \Arr{\cat{E}} \to \Arr{\cat{E}}$ is valued in $\class{M}$ and that given a square $(h,k) \co f \to g$ in $\cat{E}$ with $h,k \in \class{M}$, the pushout gap map
\[
  \begin{tikzcd}[column sep=large]
    \placeholder \ar{d}[left]{\dom \Den{u}(h,k)} \ar{r}{\Den{u}f} & \placeholder \ar{d} \ar[bend left]{ddr}{\cod \Den{u}(h,k)} \\
    \placeholder \ar[bend right=20]{drr}[below left,pos=0.2]{\Den{u}g} \ar{r} & \pushout \object \ar[dashed]{dr} \\[-1em]
    & &[-2em] \placeholder
  \end{tikzcd}
\]
is in $\class{M}$.
The first condition is always satisfied for $\class{M}$ the class of left maps, but the second can fail.

\section{Fine-grained functoriality of free algebras and monads}
\label{sec:free-monad-sequence}

The goal of this section is to review free algebra and free monad constructions described by \textcite{kelly:80} without the blanket assumption that all small colimits exist.
Instead, we make the needed colimits explicit, parametrizing them by a class of maps $\class{M}$ satisfying certain colimit closure properties.
We will construct free and algebraically free monads on pointed endofunctors:

\begin{notation}
  Given a (co)monad $\monad{M} = (M,\eta,\mu)$, write $\UnderPtd{\monad{M}} = (M,\eta)$ for its underlying (co)pointed endofunctor.
\end{notation}

\begin{definition}
  \label{algebraically-free-monad}
  Let $\ptdendo{T}$ be a pointed endofunctor on a category $\cat{E}$.
  A monad $\monad{M}$ on $\cat{E}$ equipped with a morphism of pointed endofunctors $\Gg \co \ptdendo{T} \to \UnderPtd{\monad{M}}$ on $\cat{E}$ defines the \emph{free monad on $\ptdendo{T}$} when every morphism $\Gg' \co \ptdendo{T} \to \UnderPtd{\monad{M}'}$ into some monad $\monad{M}'$ on $\cat{E}$ factors as $\Gg$ followed by a unique morphism of monads on $\cat{E}$, and the \emph{algebraically free monad on $\ptdendo{T}$} when the functor $\Alg{\monad{M}} \to \Alg{\ptdendo{T}}$ induced by $\gamma$ is an isomorphism of categories.
\end{definition}

Besides allowing us to apply the constructions in non-cocomplete settings, an essential consequence is a refined functoriality principle: a translation functor between two settings for the construction (which we call ``configurations'') need only preserve the class $\class{M}$ and associated colimits for free algebras to be preserved and free monads to be related.

Following Kelly's approach, we first treat well-pointed endofunctors and then reduce the case of an arbitrary pointed endofunctor to the well-pointed case.
We parameterize the categories of configurations by a limit ordinal $\kappa$ and impose a convergence condition in terms of the class $\class{M}$.
For simplicity of presentation, we prefer to fix $\kappa$ uniformly for all objects, only noting here that a more flexible treatment would be possible.
We refer to \cref{kelly-convergence} below for a comparison with Kelly's convergence criteria.

\subsection{Strong categories of functors, adjunctions, monads}

We express functoriality of the free algebra and free monad constructions by exhibiting functors from a category of configurations to strong (\ie, non-lax) variants of the categories $\Adju$ and $\Mnd$ of adjunctions and monads, respectively, which we define below.

\begin{definition} \label{cat-of-functors}
The \emph{category $\Fun_s$ of functors with strong morphisms} is defined as follows:
\begin{enumerate}[label=(\roman*),ref=\thedefinition(\roman*)]
\item
An object consists of categories $\cat{C}$ and $\cat{D}$ with a functor $F \co \cat{C} \to \cat{D}$.
\item
  A morphism $(U,V,\gamma) \co (\cat{C}_1, \cat{D}_1, F_1) \to (\cat{C}_2, \cat{D}_2, F_2)$ consists of functors $U \co \cat{C}_1 \to \cat{C}_2$ and $V \co \cat{D}_1 \to \cat{D}_2$ with an isomorphism $\gamma \co V F_1 \cong F_2 U$:
  \[
    \begin{tikzcd}
      \cat{C}_1 \ar{d}[left]{F_1} \ar{r}{U} & \cat{C}_2 \ar{d}{F_2} \\
      \cat{D}_1 \ar[phantom]{ur}[sloped]{\cong} \ar{r}[below]{V} & \cat{D}_2 \rlap{.}
    \end{tikzcd}
  \]
\end{enumerate}
\end{definition}

\begin{definition} \label{cat-of-adjunctions}
The \emph{category $\Adju_s$ of adjunctions with strong morphisms} is defined as follows:
\begin{enumerate}[label=(\roman*),ref=\thedefinition(\roman*)]
\item 
An object consists of categories $\cat{C}$ and $\cat{D}$ with functors $F \co \cat{C} \to \cat{D}$ and $G \co \cat{D} \to \cat{C}$ and natural transformations $\eta \co \Id \to GF$ and $\epsilon \co FG \to \Id$ such that $\epsilon F \circ F \eta = \id$ and $G \epsilon \circ \eta G = \id$.
\item \label{cat-of-adjunctions:morphism}
  A morphism $(U,V,\alpha,\beta) \co (\cat{C}_1, \cat{D}_1, F_1, G_1, \eta_1, \epsilon_1)\to (\cat{C}_2, \cat{D}_2, F_2, G_2, \eta_2, \epsilon_2)$ consists of functors $U \co \cat{C}_1 \to \cat{C}_2$ and $V \co \cat{D}_1 \to \cat{D}_2$ with isomorphisms $\alpha \co F_2 U \cong V F_1$ and $\beta \co U G_1 \cong G_2 V$ satisfying $\beta F_1 \circ U \eta = G_2 \alpha \circ \eta' U$ and $V \epsilon \circ \alpha G_1 = \epsilon' V \circ F_2 \beta$:
  \[
    \begin{tikzcd}[sep=small]
      & U \ar{dl}[above left]{U\eta_1} \ar{dr}{\eta_2U} \\
      UG_1 F_1 \ar[phantom]{rr}{=} \ar{dr}[below left]{\beta F_1} && G_2F_2U \ar{dl}{G_2\alpha} \\
      & G_2VF_1
    \end{tikzcd}
    \qquad
    \begin{tikzcd}[sep=small]
      & F_2UG_1 \ar{dl}[above left]{\alpha G_1} \ar{dr}{F_2 \beta} \\
      VF_1G_1 \ar[phantom]{rr}{=}  \ar{dr}[below left]{V \epsilon_1} && F_2G_2V \ar{dl}{\epsilon_2V} \\
      & V \rlap{.}
    \end{tikzcd}
  \]
\end{enumerate}
\end{definition}

Note that the two equations in \ref{cat-of-adjunctions:morphism} are interderivable.
There are functors from $\Adju_s$ to $\Fun_s$ selecting the left and right adjoint, respectively.
Observe that while there are lax versions of $\Fun_s$ and $\Adju_s$ for which these functors are fully faithful, this is not the case here.
Requiring natural isomorphisms instead of just natural transformations is how we encode preservation of free algebras in \cref{wellpointed-free-algebras,pointed-free-algebras}.

\begin{definition} \label{cat-of-ptd-endos}
The \emph{category $\PtdEndo_s$ of pointed endofunctors with strong morphisms} is defined as follows:
\begin{enumerate}[label=(\roman*)]
\item
An object $(\cat{E},\ptdendo{T})$ is a pointed endofunctor $\ptdendo{T}$ on a category $\cat{E}$.
\item
A morphism $(F,\gamma) \co (\cat{E}_1, (T_1, \tau_1)) \to (\cat{E}_2, (T_2, \tau_2))$ is a functor $F \co \cat{E}_1 \to \cat{E}_2$ and isomorphism $\gamma \co F T_1 \cong T_2 F$ such that $\gamma \circ F \tau_1 = \tau_2 F$.
\end{enumerate}
\end{definition}

\begin{definition} \label{cat-of-monads}
The \emph{category $\Mnd_s$ of monads with strong morphisms} is defined as follows:
\begin{enumerate}[label=(\roman*)]
\item
An object $(\cat{E},\monad{M})$ is a monad $\monad{M}$ on a category $\cat{E}$.
\item
A morphism $(F,\gamma) \co (\cat{E}_1, (M_1, \eta_1, \mu_1))$ to $(\cat{E}_2, (M_2, \eta_2, \mu_2))$ is a strong morphism of pointed endofunctors $(F, \gamma) \co (\cat{E}_1, (M_1, \eta_1)) \to (\cat{E}_2, (M_2, \eta_2))$ such that $\gamma \circ F \mu_1 = \mu_2 F \circ M_2 \gamma \circ \gamma M_1$.
\end{enumerate}
\end{definition}

\begin{terminology}
  A \emph{strict} morphism of pointed endofunctors is a strong morphism for which the isomorphism $\gamma \co FT \cong T'F$ is an identity; likewise for strict morphisms of monads.
\end{terminology}

There is a functor from $\Mnd_s$ to $\Fun_s$ forgetting the monad structure and the endofunctor aspect, and there is a functor from $\Adju_s$ to $\Mnd_s$ sending an adjunction to its associated monad.
Like $\Adju_s$, $\Mnd_s$ is a wide subcategory of its more common lax version.

\begin{remark}
  The above categories and the below categories of configurations are in fact strict 2-categories and the functors relating them are strict 2-functors, but we shall not need this here.
\end{remark}

\subsection{Well-pointed endofunctors}
\label{sec:well-pointed}

\begin{definition}
  A pointed endofunctor $\ptdendo{S} = (S,\sigma)$ is \emph{well-pointed} when $S\sigma = \sigma S$.
\end{definition}

We use transfinite iteration to build the free monad on a well-pointed endofunctor.
We aim to do so constructively (see \cref{sec:metatheory}), so this requires some care.
For concreteness, we use \citeauthor{powell:75}'s set-theoretic definition of ordinals \cite{powell:75}---an ordinal is a transitive set whose elements are transitive sets---but the constructive reader should be able to substitute the definition appropriate for their favorite metatheory.

\begin{notation}
  For ordinals $\alpha, \beta$, we write $\alpha < \beta$ to mean $\alpha \in \beta$.
  We write $\alpha \preceq \beta$ to mean that either $\alpha < \beta$ or $\alpha = \beta$.
  An \emph{$\alpha$-chain} is a diagram indexed by the poset $(\alpha,\preceq)$.

  We say $\alpha$ is a \emph{limit} ordinal when for every family $(\beta_i < \alpha)_{i \in I}$ indexed by an finite inhabited set $I$ we have some $\beta < \alpha$ with $\beta_i < \beta$ for all $i$ (\cf \textcite[Definition 3.2]{pitts-steenkamp:22}).
  Classically, this agrees with the usual definition.
\end{notation}

For a well-pointed endofunctor $\ptdendo{S}$, the forgetful functor $U_{\ptdendo{S}} \co \Alg{\ptdendo{S}} \to \cat{E}$ is a fully faithful inclusion: $\Alg{\ptdendo{S}}$ consists of those $A \in \cat{E}$ for which $\sigma_A$ is invertible, in which case the algebra map is $\sigma_A^{-1} \co SA \to A$ \cite[Proposition 1.5 and Corollary 1.6]{wolff:78}.
We want to construct the left adjoint to $U_{\ptdendo{S}}$ as the colimit in the category of pointed endofunctors on $\cat{E}$ of the $\kappa$-chain
\begin{equation} \label{wellpointed-free-algebras:0}
  \begin{tikzcd}
    ``\Id_{\cat{E}} \ar{r}{\sigma} & S \ar{r}{\sigma S} & S^2 \ar{r}{\sigma S^2} & \cdots" \rlap{.}
  \end{tikzcd}
\end{equation}
To define the diagram \eqref{wellpointed-free-algebras:0} in a constructively acceptable way, we cannot distinguish between successor and limit ordinal cases.
We adapt a construction for non-pointed endofunctors due to \textcite{pitts-steenkamp:22}.
Benno van den Berg explained a similar argument for pointed endofunctors to us in personal communication, and we follow him in using \citeauthor{koubek-reiterman:79}'s language of \emph{algebraized} (or \emph{algebraic}) chains \cite{koubek-reiterman:79}.
\Citeauthor{koubek-reiterman:79} use algebraized chains to obtain free algebras for non-pointed endofunctors, and \textcite[\S A]{bourke:19} describes an analogue for pointed endofunctors; both distinguish between limit and successor cases, so are not wholly constructive.

\begin{definition}
  \label{algebraized-chain}
  Given a pointed endofunctor $\ptdendo{T} = (T,\tau)$ on a category $\cat{E}$ and an ordinal $\kappa$, we define the \emph{category $\AlgChain{\ptdendo{T}}{\kappa}$ of $\ptdendo{T}$-algebraized $\kappa$-chains} in $\cat{E}$ as follows:
  \begin{enumerate}[label=(\roman*)]
  \item An object $(X,x)$ is a $\kappa$-chain $X \co (\kappa,\preceq) \to \cat{E}$ together with morphisms $x_{\beta < \alpha} : TX_\beta \to X_\alpha$ for each $\beta < \alpha < \kappa$, such that
    \begin{enumerate}[label=(\alph*), ref=\thedefinition(\alph*)]
    \item \label{algebraized-chain:unit} for all $\beta < \alpha < \kappa$ we have $x_{\beta < \alpha}\tau_\beta = X_{\beta \preceq \alpha}$;
    \item \label{algebraized-chain:natural}
      for all $\alpha,\alpha',\beta,\beta' < \kappa$ standing in the relations
      \[
        \begin{tikzcd}
          \beta \ar[phantom]{d}[sloped]{<} \ar[phantom]{r}{\preceq} & \beta' \ar[phantom]{d}[sloped]{<} \\
          \alpha \ar[phantom]{r}{\preceq} & \alpha' \rlap{,}
        \end{tikzcd}
      \]
      the square
      \[
        \begin{tikzcd}[column sep=large]
          TX_\beta \ar{d}[left]{x_{\beta < \alpha}} \ar{r}{TX_{\beta \preceq \beta'}} & TX_{\beta'} \ar{d}{x_{\beta' < \alpha'}} \\
          X_\alpha \ar{r}[below]{X_{\alpha \preceq \alpha'}} & X_{\alpha'}
        \end{tikzcd}
      \]
      commutes.
    \end{enumerate}
  \item A morphism $\varphi \co (X,x) \to (Y,y)$  is a natural transformation $\varphi \co X \to Y$ with the property that $y_{\beta < \alpha} \circ T\varphi_\beta = \varphi_\alpha \circ x_{\beta < \alpha}$ for $\beta < \alpha < \kappa$.
    \end{enumerate}
\end{definition}

\begin{definition}
  A $\ptdendo{T}$-algebraized $\kappa$-chain $(X,x)$ is \emph{colimiting} when for every $\beta < \kappa$ the cocone $(x_{\gamma<\beta} \co TX_\gamma \to X_\beta)_{\gamma < \beta}$ under the restricted diagram $X \upharpoonright \beta \co (\beta, \preceq) \to \cat{E}$ is colimiting.
\end{definition}

We now define the category of configurations for the free monad construction.

\begin{definition}
  Let $\class{M}$ be a wide subcategory of a category $\cat{E}$.
  For a given diagram shape $\cat{J}$, we say that $\class{M}$ \emph{has $\cat{J}$-indexed colimits in $\cat{E}$} when $\class{M}$ (as a category) has these colimits and the inclusion into $\cat{E}$ preserves them.
\end{definition}

\begin{definition} \label{wellpointed-configuration}
Let $\kappa > 0$ be a limit ordinal.
The (large) category $\ConfWP{\kappa}$ of \emph{configurations for the free monad sequence on a well-pointed endofunctor} is defined as follows:
\begin{enumerate}[label=(\roman*)]
\item
An object is a tuple $(\cat{E}, \class{M}, \ptdendo{S})$ of a category $\cat{E}$, a wide subcategory $\class{M} \hookrightarrow \cat{E}$, and a well-pointed endofunctor $\ptdendo{S} = (S,\sigma)$ on $\cat{E}$ such that:
\begin{enumerate}[label=(\alph*),ref=\thedefinition(\alph*)]
\item \label{wellpointed-configurations:unit}
  $\sigma$ is valued in $\class{M}$;
\item \label{wellpointed-configurations:composition}
  $\class{M}$ has colimits of $(1 + \alpha)$-chains in $\cat{E}$ for $\alpha < \kappa$;
\item \label{wellpointed-configurations:convergence}
  if $(X,x)$ is an $\ptdendo{S}$-algebraized $\kappa$-chain such that $X$ and $x$ factor through $\class{M}$, then $X$ admits a colimit in $\cat{E}$ and this colimit is preserved by $S$.
\end{enumerate}
\item
  A morphism $(\cat{E}_1, \class{M}_1, \ptdendo{S}_1) \to (\cat{E}_2, \class{M}_2, \ptdendo{S}_2)$ is a map $(F,\gamma) \co (\cat{E}_1,\ptdendo{S}_1) \to (\cat{E}_2,\ptdendo{S}_2)$ in $\PtdEndo_s$ such that $F$ sends $\class{M}_1$ into $\class{M}_2$, preserves colimits of $(1 + \alpha)$-chains in $\class{M}_1$ for $\alpha < \kappa$, and preserves colimits of $\ptdendo{S}$-algebraized $\kappa$-chains in $\class{M}_1$.
\end{enumerate}
\end{definition}

\begin{remark}
  \label{wellpointed-complemented-example}
  Conditions \ref{wellpointed-configurations:composition} asks that each $(1 + \alpha)$-chain in $\class{M}$ admits a cocone in $\class{M}$ that is colimiting both in $\class{M}$ and in $\cat{E}$.
  In contrast, \ref{wellpointed-configurations:convergence} requires only that each (algebraic) $\kappa$-chain has a colimit in $\cat{E}$.

  This distinction accommodates cases such as where $\cat{E}$ is a category with coproducts and $\class{M}$ is the class of complemented monomorphisms.
  These examples are important when working in constructive metatheories, as discussed in \cref{sec:metatheory}.
  Given an $\omega$-chain $A_0 \mono A_1 \mono \cdots$ of complemented monomorphisms, its colimit in can be computed without quotienting as the coproduct $\coprod_{n < \omega} A_{n} \setminus A_{n-1}$ of complements (setting $A_{-1} \defeq 0$).
  However, this need not be a colimit in the wide subcategory of complemented monomorphisms: given a family of compatible inclusions $A_n \mono B$ for $n < \omega$, we do get an inclusion $\colim_n A_n \mono B$, but the latter may not be complemented.

  We take care to only ask for colimits of \emph{algebraic} chains in \ref{wellpointed-configurations:convergence} because we will use this assumption in the reduction of the pointed to the well-pointed case; see \cref{pointed-free-algebras:end}.
\end{remark}

We now show that the forgetful functor $U_{\ptdendo{S}} \co  \Alg{\ptdendo{S}} \to \cat{E}$ associated to a configuration $(\cat{E}, \class{M}, \ptdendo{S})$ admits a left adjoint, functorially in the configuration.
There are two aspects to this theorem.
The first is to ensure that free algebras exist for any fixed configuration; we construct these explicitly following \citeauthor{kelly:80}.
This already ensures that the functor $\ConfWP{\kappa} \to \Fun_s$ lifts to through the category $\Adju_{l/s}$ defined as $\Adju_s$ in \cref{cat-of-adjunctions} but without requiring the $\alpha \co F_2 U \to VF_1$ in the definition of morphisms to be invertible, since the projection $\Adju_{l/s} \to \Fun_s$ of the right adjoint is fully faithful.

The second is to show that this $\ConfWP{\kappa} \to \Adju_s$ lifts through the inclusion $\Adju_s \to \Adju_{l/s}$, \ie, to show that the $\alpha$-components of the action of $\ConfWP{\kappa} \to \Adju$ on morphisms are invertible.
This corresponds to showing that the colimits invoked in constructing free algebras are preserved by the mediating functor; the morphisms in $\ConfWP{\kappa}$ are defined so as to ensure this.

\begin{lemma}
  \label{preserved-colimit-of-algebraized-chain-is-algebra}
  Let $\ptdendo{T}$ be a pointed endofunctor on a category $\cat{E}$, let $\kappa$ be a limit ordinal, and let $(X,x)$ be a $\ptdendo{T}$-algebraized $\kappa$-chain.
  If $X$ admits a colimit $X_\kappa \in \cat{E}$ and $\ptdendo{T}$ preserves this colimit, then $X_\kappa$ admits a $\ptdendo{T}$-algebra structure.
\end{lemma}
\begin{proof}
  Write $(\inj{\alpha} \co X_\alpha \to X_\kappa)_{\alpha < \kappa}$ for the colimit cocone under $X$.
  For each $\alpha < \kappa$, we have a choice of $\alpha < \alpha' < \kappa$ by assumption that $\kappa$ is a limit ordinal; write $t_\alpha \co TX_\alpha \to X_\kappa$ for the composite
  \[
    \begin{tikzcd}[column sep=huge]
      TX_\alpha \ar{r}{x_{\alpha < \alpha'}} & X_{\alpha'} \ar{r}{\inj{\alpha'}} & X_\kappa \rlap{.}
    \end{tikzcd}
  \]
  Furthermore, for any $\alpha \preceq \beta < \kappa$ we have a choice of $\alpha',\beta' < \gamma < \kappa$ and thus a commutative diagram
  \[
    \begin{tikzcd}[column sep=large,row sep=large]
      TX_\alpha \ar{dd}[left]{TX_{\alpha \preceq \beta}} \ar[bend right=10]{dr}[below left,pos=.6]{x_{\alpha<\gamma}} \ar{r}{x_{\alpha < \alpha'}} & X_{\alpha'} \ar{d}[pos=.3,left]{X_{\alpha'\preceq\gamma}} \ar{dr}{\inj{\alpha'}} & \\
      & X_\gamma \ar{r}[description]{\inj{\gamma}} & X_\kappa \\
      TX_\beta \ar[bend left=10]{ur}[pos=.6]{x_{\beta<\gamma}} \ar{r}[below]{x_{\beta < \beta'}} & X_{\beta'} \ar{u}[pos=.3,left]{X_{\beta'\preceq\gamma}} \ar{ur}[below right]{\inj{\beta'}}
    \end{tikzcd}
  \]
  showing that $t_{\beta} \circ TX_{\alpha \preceq \beta} = t_\alpha$.
  Thus $(t_\alpha)_{\alpha < \kappa}$ is a cocone under $TX$.
  By the assumption that $TX_\kappa$ is the colimit of $TX$, then, we have a unique induced map $t \co TX_\kappa \to X_\kappa$.
  For each $\alpha < \kappa$ we have
  \begin{align*}
    t \circ \tau_{X_\kappa} \circ \inj{\alpha}
    &= t \circ T\inj{\alpha} \circ \tau_{X_\alpha} \\
    &= \inj{\alpha'} \circ x_{\alpha < \alpha'} \circ \tau_{X_\alpha} \\
    &= \inj{\alpha'} \circ X_{\alpha \preceq \alpha'} \\
    &= \inj{\alpha} \rlap{,}
  \end{align*}
  whence $t \circ \tau_{X_\kappa} = \id$.
  Thus $t$ exhibits $X_\kappa$ as a $\ptdendo{T}$-algebra.
\end{proof}

\begin{proposition}
  \label{algebraized-chains-initial}
  Every colimiting $\ptdendo{T}$-algebraized $\kappa$-chain is an initial object of $\AlgChain{\ptdendo{T}}{\kappa}$.
\end{proposition}
\begin{proof}
  By transfinite induction on $\kappa$.
  Let $(X,x),(Y,y) \in \AlgChain{\ptdendo{T}}{\kappa}$ where $(X,x)$ is colimiting.
  By induction hypothesis, we have for $\alpha < \kappa$ a unique morphism $\varphi^\alpha \co (X,x) \upharpoonright \alpha \to (Y,y) \upharpoonright \alpha$ between the restrictions to $\AlgChain{\ptdendo{T}}{\alpha}$.
  Note that by uniqueness, the restriction of $\varphi^\alpha$ to any $\beta < \alpha$ is identical with $\varphi^\beta$.
  Because $(X,x)$ is colimiting, we have for each $\alpha < \kappa$ a unique $\varphi_\alpha \co X_\alpha \to Y_\alpha$ fitting into the diagram
  \begin{equation}
    \label{algebraized-chains-initial-definition}
    \begin{tikzcd}
      SX_\beta \ar{d}[left]{S\varphi^\alpha_\beta} \ar{r}{x_{\beta < \alpha}} & X_\alpha \ar[dashed]{d}{\varphi_\alpha} \\
      SY_\beta \ar{r}[below]{y_{\beta < \alpha}} & Y_\alpha
    \end{tikzcd}
  \end{equation}
  for all $\beta < \alpha$.
  From \cref{algebraized-chains-initial-definition}, we have that $\varphi_\beta = \varphi^\alpha_\beta$ for $\beta < \alpha$: for any $\gamma < \beta < \alpha$, we calculate that
  \[
    \varphi_\beta \circ x_{\gamma < \beta} \;=\; y_{\gamma < \beta} \circ S\varphi^\beta_\gamma \;=\; y_{\gamma < \beta} \circ S\varphi^\alpha_\gamma \;=\; \varphi^\alpha_\beta \circ x_{\gamma < \beta} \rlap{.}
  \]
  Thus \eqref{algebraized-chains-initial-definition} shows that $\varphi_\alpha$ is a morphism $\varphi \co (X,x) \to (Y,y)$.
  Uniqueness of $\varphi$ follows from uniqueness of the $\varphi^\alpha$: if $\varphi \co (X,x) \to (Y,y)$ is a morphism, then we know that $\varphi \upharpoonright \alpha = \varphi^\alpha$ for $\alpha < \kappa$ by induction hypothesis, so for any $\alpha < \kappa$ and $\beta < \alpha$ we have $\varphi_\alpha \circ x_{\beta < \alpha} = y_{\beta < \alpha} \circ S\varphi_\beta = y_{\beta < \alpha} \circ S\varphi^\alpha_\beta$.
\end{proof}

\begin{lemma}
  \label{generate-colimiting-algebraized-chain-local}
  Let $\cat{E}$ be a category, $\class{M} \hookrightarrow \cat{E}$ be a wide subcategory, and $\ptdendo{S} = (S,\sigma)$ be a well-pointed endofunctor on $\cat{E}$ such that $\sigma$ is valued in $\class{M}$ and $\class{M}$ has colimits of $\alpha$-chains in $\cat{E}$ for $\alpha < \kappa$.
  There is a colimiting $\ptdendo{S}$-algebraized $\kappa$-chain $(X,x)$ such that $X$ and $x$ factor through $\class{M}$.
\end{lemma}
\begin{proof}
  We go by transfinite induction on $\kappa$; suppose that we have such an algebraized $\alpha$-chain $(X^\alpha,x^\alpha)$ for all $\alpha < \kappa$.
  Define $X \co \kappa \to \cat{E}$ on objects by $X_{\alpha} \defeq \colim_{\alpha} {SX^\alpha}$ and write $\inj{\beta} \co SX^\alpha_\beta \to X_\alpha$ for the coprojections.
  For $\alpha \preceq \alpha'$, we have a unique isomorphism $X^{\alpha \preceq \alpha'} \co (X^\alpha,x^\alpha) \cong (X^{\alpha'},x^{\alpha'}) \upharpoonright \alpha$ by \cref{algebraized-chains-initial}, which induces a transition map $X_{\alpha \preceq \alpha'} \defeq \colim_{\alpha} {SX^{\alpha \preceq \alpha'}} \co X_{\alpha} \to X_{\alpha'}$ in $\class{M}$.
  For $\beta < \alpha < \kappa$, we have a unique $\theta_\beta^\alpha \co X_\beta \to X_\beta^\alpha$ in $\class{M}$ fitting in the diagram
  \[
    \begin{tikzcd}[sep=large]
      SX^\beta_\gamma \ar{d}[sloped]{\cong}[left]{SX^{\beta \preceq \alpha}_\gamma} \ar{r}{\inj{\gamma}} & X_\beta \ar[dashed]{d}{\Gth_\beta^\alpha} \\
      SX_\gamma^\alpha \ar{r}[below]{x^\alpha_{\gamma<\beta}} & X_\beta^\alpha
    \end{tikzcd}
  \]
  for $\gamma < \beta$.
  For $\beta < \alpha$, define $x_{\beta < \alpha} \co SX_\beta \to X_\alpha$ to be the composite
  \begin{equation}
    \label{generate-colimiting-algebraized-chain-local:bump}
    \begin{tikzcd}
      SX_\beta \ar{r}{S\theta_\beta^\alpha} &[2em] SX^\alpha_\beta \ar{r}{\inj{\beta}} & X_\alpha \rlap{.}
    \end{tikzcd}
  \end{equation}

  We check that $(X,x)$ is a $\ptdendo{S}$-algebraized $\kappa$-chain.
  For \ref{algebraized-chain:unit}, that $x_{\beta < \alpha} \circ \sigma_{X_\beta} = X_{\beta \preceq \alpha}$, first observe that for $\beta < \alpha$ the diagram
  \begin{equation}
    \label{generate-colimiting-algebraized-chain-local:trapezoid}
    \begin{tikzcd}
      &[-1em] X_{\beta}^\alpha \ar{r}{\sigma_{X_\beta^\alpha}} &[1em] SX_\beta^\alpha \ar{dr}{\inj{\alpha}} &[-1em] { } \\
      X_\beta \ar{ur}{\theta_\beta^\alpha} \ar{rrr}[below]{X_{\beta \preceq \alpha}} &&& X_\alpha
    \end{tikzcd}
  \end{equation}
  commutes by probing with $\inj{\gamma} \co SX_\gamma^\beta \to X_\beta$ for $\gamma < \beta$: we have
  \begin{align*}
    \inj{\alpha} \circ \sigma_{X_\beta^\alpha} \circ \theta_\beta^\alpha \circ \inj{\gamma}
    &= \inj{\alpha} \circ \sigma_{X_\beta^\alpha} \circ x^\alpha_{\gamma < \beta} \circ SX^{\beta \preceq \alpha}_\gamma \\
    &= \inj{\alpha} \circ Sx^\alpha_{\gamma < \beta} \circ \sigma_{SX_\gamma^\alpha} \circ SX^{\beta \preceq \alpha}_\gamma \\
    &= \inj{\alpha} \circ Sx^\alpha_{\gamma < \beta} \circ S\sigma_{X_\gamma^\alpha} \circ SX^{\beta \preceq \alpha}_\gamma \tag{well-pointedness} \\
    &= \inj{\alpha} \circ SX^\alpha_{\gamma \preceq \beta} \circ SX_\gamma^{\beta \preceq \alpha} \\
    &= \inj{\gamma} \circ SX^{\beta \preceq \alpha}_\gamma \\
    &= X_{\beta \preceq \alpha} \circ \inj{\gamma} \rlap{.}
  \end{align*}
  It follows that $x_{\beta < \alpha} \circ \sigma_{X_\beta} = \inj{\beta} \circ S\theta_\beta^\alpha \circ \sigma_{X_\beta} = \inj{\beta} \circ \sigma_{X_\beta^\alpha} \circ \theta_\beta^\alpha = X_{\beta \preceq \alpha}$.

  For \ref{algebraized-chain:natural}, let $\beta < \alpha < \kappa$ and $\beta' < \alpha' < \kappa$ with $\beta \preceq \beta'$ and $\alpha \preceq \alpha'$.
  We can check that the diagram
  \[
    \begin{tikzcd}[column sep=huge]
      X_\beta \ar{d}[left]{\theta^\alpha_\beta} \ar{rr}{X_{\beta \preceq \beta'}} && X_{\beta'} \ar{d}{\theta^{\alpha'}_{\beta'}} \\
      X^\alpha_\beta \ar{r}[below]{X^{\alpha \preceq \alpha'}_\beta} & X^{\alpha'}_\beta \ar{r}[below]{X^{\alpha'}_{\beta\preceq \beta'}} & X^{\alpha'}_{\beta'}
    \end{tikzcd}
  \]
  commutes by probing with $\inj{\gamma} \co SX_\gamma^\beta \to X_\beta$ for all $\gamma < \beta$.
  From this it follows that
  \[
    \begin{tikzcd}[column sep=huge]
      SX_\beta \ar[bend right=90]{dd}[left]{x_{\beta < \alpha}}\ar{d}[left]{S\theta^\alpha_\beta} \ar{rr}{SX_{\beta \preceq \beta'}} && SX_{\beta'} \ar{d}{S\theta^{\alpha'}_{\beta'}} \ar[bend left=90]{dd}{x_{\beta' < \alpha'}} \\
      SX^\alpha_\beta \ar{r}{SX^{\alpha \preceq \alpha'}_\beta} \ar{d}[left]{\inj{\beta}} & SX^{\alpha'}_\beta \ar{r}{SX^{\alpha'}_{\beta\preceq \beta'}}  \ar{dr}[below left]{\inj{\beta}} & SX^{\alpha'}_{\beta'} \ar{d}{\inj{\beta'}} \\
      X_\alpha \ar{rr}[below]{X_{\alpha \preceq \alpha'}} && X_{\alpha'}
    \end{tikzcd}
  \]
  commutes.

  Finally, to see that $(X,x)$ is colimiting, observe that each $\theta_\beta^\alpha$ is an isomorphism, with inverse $X_\beta^\alpha \to X_\beta$ induced by the cocone $(\inj{\gamma} \circ S(X_\gamma^{\alpha\preceq\alpha'})^{-1} \co SX^\alpha_\gamma \to X_\beta)_{\gamma<\beta}$, and these assemble to a natural isomorphism $\theta^\alpha \co X \upharpoonright \alpha \simeq X^\alpha$.
  It thus follows from the definition \eqref{generate-colimiting-algebraized-chain-local:bump} of $x$ that the cocones $(x_{\gamma < \beta})_{\gamma < \beta}$ are colimiting for all $\beta < \alpha$.
\end{proof}

The combination of \cref{algebraized-chains-initial} and \cref{generate-colimiting-algebraized-chain-local} would immediately give us initial algebras, \ie, free algebras on initial objects.
To construct free algebras on arbitrary objects, we pass to a coslice.

\begin{notation}
  When $\ptdendo{T} = (T,\tau)$ is a (well-)pointed endofunctor on $\cat{E}$ and $A$ is an object of $\cat{E}$, we write $\Coslice{A}{\ptdendo{T}}$ for the induced (well-)pointed endofunctor on the coslice $\Coslice{A}{\cat{E}}$, whose endofunctor $\Coslice{A}{T}$ sends $b \co A \to B$ to $\tau_Bb \co A \to TB$ and whose point at $b \co A \to B$ is $\tau_B \co (B,b) \to (TB,\tau_Bb)$.
\end{notation}

\begin{proposition}
  \label{generate-colimiting-algebraized-chain}
  Let $(\cat{E}, \class{M}, \ptdendo{S}) \in \ConfWP{\kappa}$ be a configuration.
  For every $A \in \cat{E}$, we have a colimiting $\Coslice{A}{\ptdendo{S}}$-algebraized $\kappa$-chain $(X,x)$ in $\Coslice{A}{\cat{E}}$ that factors through $\Coslice{A}{\class{M}}$.\footnote{By $\Coslice{A}{\class{M}}$ we mean the coslice of the category $\class{M}$ under $A$, not the full subcategory of $\Coslice{A}{\cat{E}}$ consisting of morphisms in $\class{M}$.}
\end{proposition}
\begin{proof}
  The assumption that $\class{M}$ has colimits of $(1 + \alpha)$-chains in $\cat{E}$ for $\alpha \preceq \kappa$ implies that $\Coslice{A}{\class{M}}$ has colimits of $\alpha$-chains in $\Coslice{A}{\cat{E}}$ for $\alpha \preceq \kappa$: we calculate the colimit of an $\alpha$-chain $X \co (\alpha,\preceq) \to \Coslice{A}{\class{M}}$ by calculating the colimit of the $(1+\alpha)$-chain in $\cat{E}$ that sends $0$ to $A$ and $1 + \beta$ to the object underlying $X_\beta$.
  Apply \cref{generate-colimiting-algebraized-chain-local} with the well-pointed endofunctor $\Coslice{A}{\ptdendo{S}}$.
\end{proof}

\begin{lemma}
  \label{colimiting-algebraized-chain-to-free-algebra}
  Let $(\cat{E}, \class{M}, \ptdendo{S}) \in \ConfWP{\kappa}$ be a configuration and $(X,x)$ be a colimiting $\Coslice{A}{\ptdendo{S}}$-algebraized $\kappa$-chain in $\Coslice{A}{\cat{E}}$ that factors through $\Coslice{A}{\class{M}}$.
  Then $X$ admits a colimit $(B,b) \in \Coslice{A}{\cat{E}}$, $B$ is an $\ptdendo{S}$-algebra, and $b \co A \to B$ exhibits $B$ as the free $\ptdendo{S}$-algebra on $A$.
\end{lemma}
\begin{proof}
  The colimit of a $\kappa$-chain in $\Coslice{A}{\cat{E}}$ is the colimit of a $(1 + \kappa)$-chain in $\cat{E}$.
  Since $\kappa > 0$, the coprojection $(\kappa,\preceq) \to (1 + \kappa,\preceq)$ is final; thus such a colimit can in fact be computed as the colimit of the $\kappa$-chain of codomains in $\cat{E}$.
  Furthermore, if $(X,x)$ is a $\Coslice{A}{\ptdendo{S}}$-algebraized $\kappa$-chain in $\Coslice{A}{\class{M}}$, then projecting codomains produces a $\ptdendo{S}$-algebraized $\kappa$-chain in $\class{M}$.
  By definition of $\ConfWP{\kappa}$, $\cat{E}$ has and $S$ preserves colimits of such chains.
  We conclude that $X$ has a colimit $(B,b) \in \Coslice{A}{\cat{E}}$.

  By \cref{preserved-colimit-of-algebraized-chain-is-algebra}, $(B,b)$ is an $\Coslice{A}{\ptdendo{S}}$-algebra, which implies that $B$ is an $\ptdendo{S}$-algebra.
  If $t : SC \to C$ is any other $\ptdendo{S}$-algebra with a morphism $c \co A \to C$, then we have an $\Coslice{A}{\ptdendo{S}}$-algebraized $\kappa$-chain $(Y,y)$ where $Y$ is the constant diagram at $(C, c) \in \Coslice{A}{\cat{E}}$ and $y_{\beta<\alpha} \co (\Coslice{A}{\ptdendo{S}})Y \to Y$ is $t$.
  By \cref{algebraized-chains-initial}, there is a unique morphism $\varphi \co (X,x) \to (Y,y)$, which induces a unique extension of $c \co A \to C$ along $b \co A \to B$.
\end{proof}

\begin{theorem} \label{wellpointed-free-algebras}
  The functor $\ConfWP{\kappa} \to \Fun_s$ sending $(\cat{E}, \class{M}, \ptdendo{S})$ to the forgetful functor $\Alg{\ptdendo{S}} \to \cat{E}$ lifts through the restriction $\Adju_s \to \Fun_s$ to the right adjoint.
\end{theorem}
\begin{proof}
  Let $(\cat{E}, \class{M}, \ptdendo{S}) \in \ConfWP{\kappa}$ be a configuration with $\ptdendo{S} = (S, \sigma)$ a well-pointed endofunctor.
  By \cref{generate-colimiting-algebraized-chain,colimiting-algebraized-chain-to-free-algebra}, we have a free $\ptdendo{S}$-algebra on every $A \in \cat{E}$.
  This gives us a left adjoint $S^\kappa$ to $U_{\ptdendo{S}}$, completing the construction of the action on objects of $\ConfWP{\kappa} \to \Adju_s$.
  
For functoriality, given a morphism $(F,\gamma) \co (\cat{E}_1, \class{M}_1, \ptdendo{S}_1) \to (\cat{E}_2, \class{M}_2, \ptdendo{S}_2)$, the canonical comparison map $S_2^\kappa  F \to F S_1^\kappa$ is invertible thanks to the colimit-preservation assumptions on $F$: for $A \in \cat{E}$, it sends the unique colimiting $\Coslice{A}{\ptdendo{S}_1}$-algebraized $\kappa$-chain in $\Coslice{A}{\cat{E}_1}$ provided by \cref{generate-colimiting-algebraized-chain} to the unique colimiting $\Coslice{FA}{\ptdendo{S}_2}$-algebraized $\kappa$-chain in $\Coslice{FA}{\cat{E}_2}$.
\end{proof}

As a consequence of \cref{wellpointed-free-algebras}, free monads exist functorially:

\begin{theorem} \label{wellpointed-free-monad}
  The projection $\ConfWP{\kappa} \to \Cat$ lifts to a functor $\ConfWP{\kappa} \to \Mnd_s$ sending $(\cat{E}, \class{M}, \ptdendo{S})$ to the free and algebraically free monad on $\ptdendo{S}$.
\end{theorem}
\begin{proof}
  This is an immediate consequence of \cref{wellpointed-free-algebras}, as the free and algebraically free monad on a pointed endofunctor is given by the monad of the free algebra adjunction \cite[Proposition 22.2 and Theorem 22.3]{kelly:80}.
\end{proof}

\begin{remark}
  \label{kelly-convergence}
  Kelly describes a number of convergence criteria for the free algebra construction on a well-pointed or pointed endofunctor $\ptdendo{S} = (S,\sigma)$, each requiring that \emph{$S$ preserves $\class{E}$-tightness of $(\class{M}',\cat{K})$-cones} for some orthogonal factorization systems $(\class{E},\class{M})$ and $(\class{E}',\class{M}')$ and diagram category (or categories) $\cat{K}$ \cite[\S2.3]{kelly:80}.
  Here an \emph{$(\class{M}',\cat{K})$-cone} is a cocone $\delta \co d \to \Delta e$ under a $\cat{K}$-indexed diagram $d$ for which each $\delta_i \co d_i \to e$ belongs to $\class{M}'$.
  A cocone $\delta \co d \to \Delta e$ is \emph{$\class{E}$-tight} when the induced map $[\delta] \co \colim d \to e$ is in $\class{E}$.

  Our convergence condition on our $S$ and $\class{M}$ is loosely analogous to asking that $S$ preserves isomorphism-tightness of $(\class{M},\kappa)$-cones.
  However, we do not assume that $\class{M}$ is the right class of an orthogonal factorization system, while conversely Kelly does not assume that $\class{M}$ is closed under transfinite compositions.
  We also use the property in a different way.
  Our conditions ensure that the diagram \eqref{wellpointed-free-algebras:0} factors through our $\class{M}$, allowing us to apply the convergence criterion to that diagram directly.
  By contrast, Kelly's argument (see, \eg, the proof of \cite[Proposition 4.1]{kelly:80}) does not require that \eqref{wellpointed-free-algebras:0} factors through his $\class{M}'$; rather, he derives auxiliary diagrams which do factor through $\class{M}'$ and uses the tightness-preservation property with \emph{these} diagrams to arrive indirectly at the convergence of \eqref{wellpointed-free-algebras:0}.
\end{remark}

\subsection{Pointed endofunctors}

For the construction of free algebras and monads on arbitrary pointed endofunctors, we need not only sequential colimits but also pushouts.
We now introduce the notion of \emph{backdrop} that we will use throughout this article:

\begin{definition}
  Let $\class{M}$ be a wide subcategory of a category $\cat{E}$.
  We say that $\class{M}$ is \emph{closed under cobase change in $\cat{E}$} when for every span consisting of morphisms $A \to B$ in $\class{M}$ and $A \to X$ in $\cat{E}$, there is a pushout square of the form
  \[
    \begin{tikzcd}
      A \ar{d}[left]{\class{M} \ni} \ar{r} & X \ar[dashed]{d}{\in \class{M}} \\
      B \ar[dashed]{r} & \pushout Y
    \end{tikzcd}
  \]
  in $\cat{E}$.
\end{definition}

\begin{definition}
  \label{backdrop}
  Let $\cat{E}$ be a category and $\kappa > 0$ be a limit ordinal.
  A wide subcategory $\class{M}$ of $\cat{E}$ is a \emph{$\kappa$-backdrop} when
  \begin{enumerate}[label=(\alph*),ref=\thedefinition(\alph*)]
  \item \label{backdrop:sequence} $\class{M}$ has colimits of $(1 + \alpha)$-chains in $\cat{E}$ for $\alpha < \kappa$;
  \item \label{backdrop:end} $\kappa$-chains in $\class{M}$ have colimits in $\cat{E}$;
  \item \label{backdrop:pushout} $\class{M}$ is closed under cobase change in $\cat{E}$.
  \end{enumerate}
  Given $F \co \cat{E}_1 \to \cat{E}_2$ and $\kappa$-backdrops $\class{M}_1$ and $\class{M}_2$ in $\cat{E}_1$ and $\cat{E}_2$ respectively, we say that $F$ is a \emph{$\kappa$-backdrop-preserving functor} $(\cat{E}_1,\class{M}_1) \to (\cat{E}_2,\class{M}_2)$ when $F$ sends $\class{M}_1$ into $\class{M}_2$, and preserves colimits of $(1+\alpha)$-chains in $\class{M}_1$ for $\alpha < \kappa$, colimits of $\kappa$-chains in $\class{M}_1$, and cobase changes of maps in $\class{M}_1$.
\end{definition}

\begin{example}
  \label{monomorphisms-backdrop}
  For any adhesive \cite{lack-sobocinski:05} and $\kappa$-exhaustive \cite[\S3]{shulman:15} category $\cat{E}$ (\eg, any topos), the class of monomorphisms in $\cat{E}$ is a $\kappa$-backdrop.
\end{example}

\begin{example}
  \label{complemented-backdrop}
  If $\cat{E}$ is a category with coproducts, then the class of complemented monomorphisms in $\cat{E}$ is an $\omega$-backdrop. As noted in \cref{wellpointed-complemented-example}, $\cat{E}$ has colimits of $\omega$-chains of complemented monomorphisms.
  Cobase changes of along complemented monomorphisms can likewise be computed using coproducts and complements:
  \[
    \begin{tikzcd}
      A \ar{d}[left]{f} \ar[tail]{r}{m} & C \ar{d} \\
      B \ar[tail]{r} & \pushout B \sqcup (C\setminus A) \rlap{.}
    \end{tikzcd}
  \]
  Given another category $\cat{C}$, the functor category $\Func{\cat{C}}{\cat{E}}$ has the class of levelwise complemented monomorphisms as an $\omega$-backdrop, since the relevant colimits are computed levelwise.
  In a constructive metatheory without quotients (see \cref{sec:metatheory}), we are interested in particular in the case where $\cat{E} = \Set$.
\end{example}

We recall \emph{Leibniz application} of a natural transformation, an instance of the Leibniz construction exposited by \textcite[\S7]{joyal-tierney:07} and \textcite[\S4]{riehl-verity:14}.

\begin{definition}
  \label{leibniz-application}
  Let $\alpha \co F \to G$ be a natural transformation between functors $F,G \co \cat{C} \to \cat{D}$.
  The \emph{(Leibniz) pushout application} $\leibpushapp{\alpha} \co \Arr{\cat{C}} \to \Arr{\cat{D}}$ and \emph{(Leibniz) pullback application} $\leibpullapp{\alpha} \co \Arr{\cat{C}} \to \Arr{\cat{D}}$, when they exist, are the functors sending $f \co A \to B$ in $\cat{C}$ to the pushout and pullback gap maps
  \[
    \begin{tikzcd}
      FA \ar{d}[left]{Ff} \ar{r}{\alpha_A} & GA \ar{d} \ar[bend left=20]{ddr}{Gf} \\
      FB \ar[bend right=20]{drr}[below]{\alpha_B} \ar{r} & \pushout \object \ar[dashed]{dr}[pos=0.2]{\leibpushapp{\alpha}f} \\[-2em]
      & & GB
    \end{tikzcd}
    \qquad \text{and} \qquad
    \begin{tikzcd}
      FA \ar[bend right=20]{ddr}[below left]{Ff} \ar[dashed]{dr}[below left,pos=0.7]{\leibpullapp{\alpha}f} \ar[bend left=20]{drr}{\alpha_A} && \\[-2em]
      & \object \pullback \ar{d} \ar{r} & GA \ar{d}{Gf} \\
      & FB \ar{r}[below]{\alpha_B} & GB
    \end{tikzcd}
  \]
  respectively.
\end{definition}

\begin{definition} \label{pointed-configurations}
  Let $\kappa > 0$ be a limit ordinal.
  The (large) category $\ConfP{\kappa}$ of \emph{configurations for the free monad sequence on a pointed endofunctor} is defined as follows.
  \begin{enumerate}[label=(\roman*)]
  \item
    An object is a tuple $(\cat{E}, \class{M}, \ptdendo{T})$ of a category $\cat{E}$, a wide subcategory $\class{M} \hookrightarrow \cat{E}$, and a pointed endofunctor $\ptdendo{T} = (T, \tau)$ on $\cat{E}$ such that:
    \begin{enumerate}[label=(\alph*),ref=\thedefinition(\alph*)]
    \item \label{pointed-configurations:backdrop}
      $\class{M}$ is a $\kappa$-backdrop in $\cat{E}$,
    \item \label{pointed-configurations:unit}
      $\tau$ is valued in $\class{M}$,
    \item \label{pointed-configurations:leibniz-application}
      $\class{M}$ is closed under pushout application of $\tau$,
    \item \label{pointed-configurations:convergence}
      $T$ preserves colimits of $\kappa$-chains in $\class{M}$.
    \end{enumerate}
  \item
    A morphism from $(\cat{E}_1, \class{M}_1, \ptdendo{T}_1)$ to $(\cat{E}_2, \class{M}_2, \ptdendo{T}_2)$ is a morphism $(F,\gamma) \co (\cat{E}_1,\ptdendo{T}_1) \to (\cat{E}_2,\ptdendo{T}_2)$ in $\PtdEndo_s$ such that $F$ defines a $\kappa$-backdrop-preserving functor $(\cat{E}_1,\class{M}_1) \to (\cat{E}_2,\class{M}_2)$.
  \end{enumerate}
\end{definition}

\begin{lemma}
  \label{pointed-configurations:functor}
  Let $(\cat{E}, \class{M}, (T,\tau)) \in \ConfP{\kappa}$ be a configuration.
  Then $T$ preserves $\class{M}$.
\end{lemma}
\begin{proof}
  For any $f \co A \to B$ in $\class{M}$, we have the diagram
  \[
    \begin{tikzcd}
      A \ar{d}[left]{f} \ar{r}{\tau_A} & TA \ar{d} \ar[bend left=20]{ddr}{Tf} \\
      B \ar[bend right=20]{drr}[below]{\tau_B} \ar{r} & \pushout \object \ar{dr}[pos=0.2]{\leibpushapp{\tau}f} \\[-2em]
      & & TB \rlap{.}
    \end{tikzcd}
  \]
  The morphism from $TA$ to the pushout object is a cobase change of $f$, so belongs to $\class{M}$ by \ref{pointed-configurations:backdrop}.
  The pushout gap map belongs to $\class{M}$ by \ref{pointed-configurations:leibniz-application}.
  Thus their composite $Tf$ belongs to $\class{M}$.
\end{proof}

\begin{remark}
  \label{pointed-configuration-simplification}
  Let $\cat{E}$ be a category with a wide subcategory $\class{M}$ and let $(T,\tau)$ be a pointed endofunctor  satisfying \ref{pointed-configurations:backdrop} and \ref{pointed-configurations:unit}.
  If $T$ preserves $\class{M}$ and $\tau$ is cartesian, then \ref{pointed-configurations:leibniz-application} holds whenever $\class{M}$ is closed under ``binary unions'' in the sense that for any pullback square of the form
  \[
    \begin{tikzcd}
      A \ar{d}[left]{\class{M} \ni} \pullback \ar{r}{\in \class{M}} & C \ar{d}{\in \class{M}} \\
      B \ar{r}[below]{\in \class{M}} & D
    \end{tikzcd}
  \]
  the pushout gap map is in $\class{M}$.
  In particular, this condition is satisfied when $\class{M}$ is the class of monomorphisms in an adhesive category \cite[Theorem 5.1]{lack-sobocinski:05}.
\end{remark}

We employ the following construction that transfers a pointed endofunctor along an adjunction.

\begin{definition}
  \label{transfer}
  Let $\adjunction{\cat{E}}{F}{G}{\cat{F}}$ be an adjoint pair of functors and let $(T,\Gt)$ be a pointed endofunctor on $\cat{E}$.
  When the pushout
  \begin{equation}
    \label{pointed-endofunctor-transfer-pushout}
    \begin{tikzcd}[column sep=large]
      FG \ar{d}[left]{\epsilon} \ar{r}{F\Gt G} & FTG \ar[dashed]{d} \\
      \Id_{\cat{F}} \ar[dashed]{r}[below]{\Gp} & \pushout P
    \end{tikzcd}
  \end{equation}
  in $\Func{\cat{F}}{\cat{F}}$ exists and is computed pointwise, we say that the pointed endofunctor $(P,\Gp)$ on $\cat{F}$ is the \emph{transfer of $(T,\Gt)$ along $F \dashv G$}.
\end{definition}

\begin{proposition}
  \label{transfer-well-pointed}
  Let $\adjunction{\cat{E}}{F}{G}{\cat{F}}$ be an adjoint pair of functors and let $\ptdendo{S}$ be a well-pointed endofunctor on $\cat{E}$.
  When it exists, the transfer of $\ptdendo{S}$ along $F \dashv G$ is also well-pointed.
\end{proposition}
\begin{proof}
  See \textcite[Proposition 9.2]{kelly:80}.
\end{proof}

\begin{proposition}
  \label{transfer-algebras}
  Let $\adjunction{\cat{E}}{F}{G}{\cat{F}}$ be an adjoint pair of functors and let $\ptdendo{T} = (T,\tau)$ be a pointed endofunctor on $\cat{E}$.
  When the transfer $\ptdendo{P} = (P,\pi)$ of $\ptdendo{T}$ along $F \dashv G$ exists, the square
  \begin{equation}
    \label{transfer-algebras-pullback}
    \begin{tikzcd}[row sep=large, column sep=small]
      \Alg{\ptdendo{P}} \ar{d}[left]{U_{\ptdendo{P}}} \pullback \ar{r} & \Alg{\ptdendo{T}} \ar{d}{U_{\ptdendo{T}}} \\
      \cat{F} \ar{r}[below]{G} & \cat{E}
    \end{tikzcd}
  \end{equation}
  is a pullback (and thus also a 2-pullback \cite[Theorem 1]{joyal-street:93}).
  Here the functor $\Alg{\ptdendo{P}} \to \Alg{\ptdendo{T}}$ sends an algebra $PA \to A$ to the transpose of the composite $FTGA \to PA \to A$.
\end{proposition}
\begin{proof}
  An explicit proof can be found in \textcite[Lemma 10]{seip:24}.
  The case where $\ptdendo{T}$ is well-pointed is in \textcite[Proposition 9.2]{kelly:80}, and the general case is used by \textcite[Proposition 16]{bourke-garner:16} with reference to the proof for idempotent monads in \textcite[\S2]{wolff:78}.
\end{proof}

In other words, the category of objects $A \in \cat{F}$ paired with a $\ptdendo{T}$-algebra structure on $GA$ is itself the category of algebras for a pointed endofunctor.

\begin{definition}
  \label{transfer-chain}
  Let $\adjunction{\cat{E}}{F}{G}{\cat{F}}$ be an adjoint pair of functors and let $\ptdendo{T} = (T,\tau)$ be a pointed endofunctor on $\cat{E}$ admitting a transfer $\ptdendo{P} = (P,\pi)$ along $F \dashv G$.
  Given a $\ptdendo{P}$-algebraized $\alpha$-chain $(X,x)$, write $G(X,x)$ for the $\ptdendo{T}$-algebraized $\alpha$-chain consisting of the $\alpha$-chain $GX \colon (\alpha,\preceq) \to \cat{E}$ and maps
  \[
    \begin{tikzcd}
      TGX_\beta \ar{r}{\gamma_{X_\beta}} & GPX_\beta \ar{r}{Gx_{\beta<\alpha}} &[2em] GX_\beta
    \end{tikzcd}
  \]
  for $\beta < \alpha$, where $\gamma \colon TG \to GP$ is the transpose of the transformation $FTG \to P$ in the pushout \eqref{pointed-endofunctor-transfer-pushout} defining $\ptdendo{P}$.
\end{definition}

\begin{proposition}
  \label{transfer-functoriality}
  Let a strong morphism of adjunctions
  \[
    (U,V,\alpha,\beta) \co (\cat{C}_1, \cat{D}_1, F_1, G_1, \eta_1, \epsilon_1) \to (\cat{C}_2, \cat{D}_2, F_2, G_2, \eta_2, \epsilon_2)
  \]
  in $\Adju_s$ be given together with an extension of $U$ to a strong morphism of pointed endofunctors $(U,\gamma) \co (\cat{C}_1,\ptdendo{T}_1) \to (\cat{C}_2,\ptdendo{T}_2)$ in $\PtdEndo_s$.
  If the transfers $\ptdendo{P}_1$ of $\ptdendo{T}_1$ along $F_1 \dashv G_1$ and $\ptdendo{P}_2$ of $\ptdendo{T}_2$ along $F_2 \dashv G_2$ respectively exist and $V$ preserves the pushouts defining $\ptdendo{P}_1$, then $V$ extends to a morphism $(\cat{D}_1,\ptdendo{P}_1) \to (\cat{D}_2,\ptdendo{P}_2)$ in $\PtdEndo_s$.
\end{proposition}
\begin{proof}
  We have an isomorphism of spans
  \[
    \begin{tikzcd}[row sep=large, column sep=huge]
      V \ar[equal]{dddd} & VF_1G_1 \ar{l}[above]{V\epsilon_1} \ar{r}{VF_1\tau_1 G_1} & VF_1T_1G_1 \\
      & & F_2UT_1G_1 \ar{u}[sloped]{\cong}[right]{\alpha T_1G_1} \ar{dd}[sloped,below]{\cong}{F_2\gamma G_1} \\[-2em]
      & F_2UG_1 \ar{uu}{\alpha G_1}[sloped,below]{\cong}  \ar{ur}[near end]{F_2U\tau_1G_1} \ar{dr}[below left,near end]{F_2\tau_2UG_1} \ar{dd}[sloped]{\cong}[left]{F_2\beta} \\[-2em]
      & & F_2T_2UG_1 \ar{d}[sloped,below]{\cong}{F_2T_2\beta} \\
      V & F_2G_2V \ar{l}{\epsilon_2V} \ar{r}[below]{F_2\tau_2G_2V} & F_2T_2G_2V
    \end{tikzcd}
  \]
  and thus an induced isomorphism of pushouts
  \[
    \begin{tikzcd}
      V \ar[equal]{d} \ar{r}{V\pi_1} & VP_1 \ar[dashed]{d} \\
      V \ar{r}[below]{\pi_2V} & P_2V
    \end{tikzcd}
  \]
  as required.
\end{proof}

Following Kelly, we reduce the construction of a free monad on a pointed endofunctor $\ptdendo{T}$ on $\cat{E}$ to the construction of a free monad on a derived well-pointed endofunctor $\ptdendo{T}^\qoppa$ on a category $\cat{E}^\qoppa$.

\begin{notation}
  Let $\cat{E}$ be a category and $\class{M}$ be a wide subcategory of $\cat{E}$ closed under cobase change.
  Given a pointed endofunctor $\ptdendo{T} = (T, \tau)$ on $\cat{E}$ whose unit is valued in $\class{M}$, write
  \[
    \begin{tikzcd}
      \Arr{\cat{E}} \ar[yshift=0.6em]{r}{\tau_*} \ar[phantom]{r}[font=\small]{\bot} & \Comma{T}{\cat{E}} \ar[yshift=-0.6em]{l}{\tau^!}
    \end{tikzcd}
  \]
  for the adjoint pair where $\tau^! \co T \downarrow \cat{E} \to \cat{E}^\to$ sends $(A, B, f \co TA \to B)$ to $f\tau_A \co A \to B$ and $\tau_*$ sends $f \co A \to B$ to the triple $(A, C, g \co TA \to C)$ defined by the pushout
  \begin{equation} \label{pointed-free-algebras:0.5}
    \begin{tikzcd}
      A \ar{d}[left]{f} \ar{r}{\tau_A} & TA \ar[dashed]{d}{g} \\
      B \ar[dashed]{r} & \pushout C \rlap{,}
    \end{tikzcd}
  \end{equation}
  which exists by closure of $\class{M}$ under cobase change in $\cat{E}$.
\end{notation}

\begin{notation}
  Given a category $\cat{E}$ and wide subcategory $\class{M} \hookrightarrow \cat{E}$, write $\ArrFull{\cat{E}}{\class{M}}$ for the full subcategory of $\Arr{\cat{E}}$ consisting of arrows in $\class{M}$.
\end{notation}

\begin{definition}
  \label{pointed-free-algebras:category}
  Let $\cat{E}$ be a category, $\class{M}$ be a wide subcategory of $\cat{E}$ closed under cobase change, and $\ptdendo{T}$ be a pointed endofunctor on $\cat{E}$.
  Define $\cat{E}^\qoppa$ via the following pullback, which is also a weak 2-pullback since $\class{M}$ is replete \cite[Theorem 1]{joyal-street:93}:
  \begin{equation} \label{pointed-free-algebras:0}
    \begin{tikzcd}
      \cat{E}^\qoppa \pullback \ar[hook,dashed]{d} \ar[dashed]{r} & \ArrFull{\cat{E}}{\class{M}} \ar[hook]{d} \\
      \Comma{T}{\cat{E}} \ar{r}[below]{\tau^!} & \Arr{\cat{E}} \rlap{.}
    \end{tikzcd}
  \end{equation}
  Explicitly, $\cat{E}^\qoppa$ is the full subcategory of $T \downarrow \cat{E}$ of objects $(A, B, f)$ for which $f\tau_A \co A \to B$ is in $\class{M}$.
  The left adjoint $\tau_*$ of $\tau_!$ restricts to a functor $\ArrFull{\cat{E}}{\class{M}} \to \cat{E}^\qoppa$, so we have a restricted adjoint pair
  \[
    \begin{tikzcd}
      \ArrFull{\cat{E}}{\class{M}} \ar[yshift=0.6em]{r}{\tau_*} \ar[phantom]{r}[font=\small]{\bot} & \cat{E}^\qoppa \ar[yshift=-0.6em]{l}{\tau^!}
    \end{tikzcd}
  \]
  that we abusively write with the same notation.
\end{definition}

\begin{notation}
  In the setting of \cref{pointed-free-algebras:category}, write $\class{M}^\qoppa$ for wide subcategory of $\cat{E}^\qoppa$ consisting of morphisms $(u \co A \to A', v \co B \to B') \co (A,B,f) \to (A',B',f')$ with $u, v \in \class{M}$.
\end{notation}

\begin{definition}
  \label{codomain-endofunctor}
  Given a category $\cat{E}$, write $\CodPtdEndo_{\cat{E}} = (\CodEndo_{\cat{E}},\codpt)$ for the well-pointed endofunctor on $\Arr{\cat{E}}$ defined as follows:
  \begin{enumerate}[label=(\roman*)]
  \item $\CodEndo_{\cat{E}}$ sends $f \co A \to B$ to the identity $\id_B \co B \to B$, with the evident functorial action.
  \item $\codpt \co \Id_{\cat{E}} \to \CodEndo_{\cat{E}}$ is given at $f \co A \to B$ by the square
  \[
    \begin{tikzcd}
      A \ar{d}[left]{f} \ar{r}{f} & B \ar[equals]{d} \\
      B \ar[equals]{r} & B \rlap{.}
    \end{tikzcd}
  \]
  \end{enumerate}
\end{definition}

\begin{proposition}
  \label{codomain-endofunctor-algebras}
  The category $\Alg{\CodPtdEndo_{\cat{E}}}$ is isomorphic over $\Arr{\cat{E}}$ to the full subcategory $\ArrFull{\cat{E}}{\cong} \hookrightarrow \Arr{\cat{E}}$ of isomorphisms in $\cat{E}$.
  \qed
\end{proposition}

\begin{lemma}
  \label{pointed-free-algebras:transfer}
  Let $(\cat{E},\class{M},\ptdendo{T}) \in \ConfP{\kappa}$.
  The pointed endofunctor $\CodPtdEndo_{\cat{E}}$ on $\Arr{\cat{E}}$ restricts to $\ArrFull{\cat{E}}{\class{M}}$, and the restricted pointed endofunctor transfers along $\adjunction{\ArrFull{\cat{E}}{\class{M}}}{\tau_*}{\tau^!}{\cat{E}^\qoppa}$ to define a well-pointed endofunctor $\ptdendo{T}^\qoppa$ on $\cat{E}^\qoppa$ whose unit is valued in $\class{M}^\qoppa$.
\end{lemma}
\begin{proof}
  That $\CodPtdEndo_{\cat{E}}$ restricts to $\ArrFull{\cat{E}}{\class{M}}$ is evident.
  Per \cref{transfer,transfer-well-pointed}, $\CodPtdEndo_{\cat{E}}$ transfers to define a well-pointed endofunctor $\ptdendo{T}^\qoppa$ on $\cat{E}^\qoppa$ provided that the following pushout in $\cat{E}^\qoppa$ exists for all $(A, B, f) \in \cat{E}^\qoppa$:
\begin{equation} \label{pointed-free-algebras:2}
  \begin{tikzcd}[column sep=7em]
    (A, B \sqcup_A TA, \inr) \ar{d}[left]{\epsilon_{(A,B,f)}} \ar{r}{\tau_* \codpt_{\tau^!(A,B,f)}} & (B,TB,\id_{TB}) \ar[dashed]{d} \\
    (A, B, f) \ar[dashed]{r} & \pushout (X,Y,k) \rlap{.}
  \end{tikzcd}
\end{equation}
The pushout $X$ of the domain components is trivially $B$; the pushout square is absolute and thus preserved by $T$.
Hence $Y$ is simply computed as a pushout of the codomain components:
\begin{equation} \label{pointed-free-algebras:3}
  \begin{tikzcd}[column sep=huge]
    B \sqcup_A TA \ar{d}[left]{\copair{\id}{f}} \ar{r}{\leibpushapp{\tau}(f\tau_A)} & TB \ar[dashed]{d} \\
    B \ar[dashed]{r}[below]{h} & \pushout Y \rlap{.}
  \end{tikzcd}
\end{equation}
The top map is the pushout application of $\tau$ to $f \tau_A$ and thus in $\class{M}$ by~\ref{pointed-configurations:leibniz-application}, since $f \tau_A$ is in $\class{M}$.
Hence the pushout \eqref{pointed-free-algebras:3} exists and the bottom map $h$ is in $\class{M}$ by~\ref{pointed-configurations:backdrop}.
The map $k \co TY \to X$ is given by functoriality of pushouts in the cube
\begin{equation} \label{pointed-free-algebras:3.5}
\begin{tikzcd}[column sep=3em]
  TA
  \ar[rr, "T(f\tau_A)"]
  \ar[dd, "\inr"]
  \ar[dr, equals]
  \ar[drrr, phantom, "\ulcorner"{pos=0.90,xslant=-1.8,xscale=3}]
&&
  TB
  \ar[dd, equals]
  \ar[dr, equals]
\\&
  TA
  \ar[rr, crossing over, "T(f\tau_A)", near start]
&&
  TB
  \ar[dd, dashed, "k"]
\\
  B \sqcup_A TA
  \ar[rr, "\leibpushapp{\tau}(f\tau_A)", near start]
  \ar[dr, "\copair{\id}{f}"']
  \ar[drrr, phantom, "\ulcorner"{pos=0.90,xslant=-1.8,xscale=3}]
&&
  TB
  \ar[dr, "k"]
\\&
  B
  \ar[from=uu, crossing over, "f", near end]
  \ar[rr, "h"']
&&
  Y \rlap{,}
\end{tikzcd}
\end{equation}
and is thus identified as indicated above with a leg of the bottom pushout square.
From this lower square, we see that $k \tau_B = k \circ \leibpushapp{\tau}(f\tau_A) \circ \inl = h \circ \copair{\id}{f} \circ \inl = h$.
Since we have established that $h$ is in $\class{M}$, this shows that $(X, Y, k) \in \cat{E}^\qoppa$.

Finally, we check that the unit of $\ptdendo{T}^\qoppa$ is valued in $\class{M}^\qoppa$.
At $(A, B, f) \in \cat{E}^\qoppa$, its value is the bottom horizontal map of~\eqref{pointed-free-algebras:2}.
The domain component is simply $f\tau_A \co A \to B$, which is in $\class{M}$ by definition of $\cat{E}^\qoppa$.
The codomain component is the bottom row of~\eqref{pointed-free-algebras:3}, which we have established is in $\class{M}$.
\end{proof}

\begin{lemma}
  \label{pointed-free-algebras:compare-algebras}
  Let $(\cat{E},\class{M},\ptdendo{T}) \in \ConfP{\kappa}$.
  Observe that the forgetful functor $U_{\ptdendo{T}} \co \Alg{\ptdendo{T}} \to \cat{E}$ factors as a composite
  \begin{equation} \label{pointed-free-algebras:0.75}
    \begin{tikzcd}
      (A,f) \ar[mapsto]{r} & (A,A,f) \\[-1.5em]
      \Alg{\ptdendo{T}} \ar{r} & \cat{E}^\qoppa \ar{r} & \cat{E} \\[-1.5em]
      & (A,B,f) \ar[mapsto]{r} & A \rlap{.}
    \end{tikzcd}
  \end{equation}
  Over $\cat{E}^\qoppa$, the category $\Alg{\ptdendo{T}} \to \cat{E}^\qoppa$ is equivalent to $U_{\cat{}\ptdendo{T}} \co \Alg{\cat{}\ptdendo{T}^\qoppa} \to \cat{E}^\qoppa$.
\end{lemma}
\begin{proof}
  Consider the composite square
  \[
    \begin{tikzcd}
      \Alg{\ptdendo{T}} \ar{d} \ar{r} & \ArrFull{\cat{E}}{\cong} \ar[hook]{d} \\
      \cat{E}^\qoppa \ar{d} \ar{r}[below]{\tau^!} & \ArrFull{\cat{E}}{\class{M}} \ar[hook]{d} \\
      \Comma{T}{\cat{E}} \ar{r}[below]{\tau^!} & \Arr{\cat{E}} \rlap{.}
    \end{tikzcd}
  \]
  The outer square is a weak 2-pullback and the  lower square is the weak 2-pullback~\eqref{pointed-free-algebras:0}, so the upper square is a weak 2-pullback by pullback pasting.
  By \cref{transfer-algebras,codomain-endofunctor-algebras}, the square
  \[
    \begin{tikzcd}
      \Alg{\ptdendo{T}^\qoppa} \ar{d} \ar{r} & \ArrFull{\cat{E}}{\cong} \ar[hook]{d} \\
      \cat{E}^\qoppa \ar{r}[below]{\tau^!} & \ArrFull{\cat{E}}{\class{M}}
    \end{tikzcd}
  \]
  is also a weak 2-pullback, so the result follows by uniqueness of weak 2-pullbacks up to equivalence.
\end{proof}

\begin{lemma}
  \label{pointed-free-algebras:composition}
  For any $(\cat{E},\class{M},\ptdendo{T}) \in \ConfP{\kappa}$, $\class{M}^\qoppa$ has colimits of $(1 + \alpha)$-chains in $\cat{E}^\qoppa$ for all $\alpha < \kappa$.
\end{lemma}
\begin{proof}
  Consider a diagram $(A, B, f) \co (1+\alpha,\preceq) \to \cat{E}^\qoppa$ given by functors $A, B \co (1+\alpha,\preceq) \to \cat{E}$ factoring through $\class{M}$ and a natural transformation $f \co TA \to B$ such that $f \circ \tau A$ has components in $\class{M}$.
  We will show that the colimit $(X, Y, k) \defeq \colim_{1+\alpha} (A, B,f)$ exists in $T \downarrow \cat{E}$ and lifts to $\cat{E}^\qoppa$.
  By~\cref{pointed-configurations:functor} and~\ref{pointed-configurations:backdrop}, the colimits of $A$, $B$, $T A$ exist and have coprojections in $\class{M}$.
  We take $X = \colim_{1+\alpha} A$ and compute $Y$ and $k$ via the following pushout:
  \begin{equation} \label{pointed-free-algebras:4}
    \begin{tikzcd}[column sep=huge]
      \colim_{1+\alpha} A \ar{dr}[below left]{\tau \colim_{1+\alpha} A} \ar{r}{\colim_{1+\alpha} \tau A} &[1em] \colim_{1+\alpha} TA \ar{d} \ar{r}{\colim_{1+\alpha} f} & \colim_{1+\alpha} B \ar[dashed]{d} \\
      & T \colim_{1+\alpha} A \ar[dashed]{r}{k} & \pushout Y \rlap{.}
    \end{tikzcd}
  \end{equation}
  Again by~\cref{pointed-configurations:functor} and~\ref{pointed-configurations:backdrop}, the middle vertical map is in $\class{M}$.
  By~\ref{pointed-configurations:backdrop}, the pushout exists and the rightmost vertical map is in $\class{M}$.
  Thus the colimit $(X,Y,k)$ exists in $T \downarrow \cat{E}$.
  By~\ref{pointed-configurations:backdrop}, the composite top row is in $\class{M}$.
  The composite $X \overset{\tau_X}{\longrightarrow} TX \overset{k}{\longrightarrow} Y$ thus factors as a sequence $\colim_{1+\alpha} A \to \colim_{1+\alpha} B \to Y$ of maps in $\class{M}$, so is itself in $\class{M}$.
  Thus the colimit lifts to $\cat{E}^\qoppa$.
\end{proof}

\begin{lemma}
  \label{pointed-free-algebras:end}
  For any $(\cat{E},\class{M},\ptdendo{T}) \in \ConfP{\kappa}$ and $\ptdendo{T}^\qoppa$-algebraized $\kappa$-chain $(X,x)$ such that $X$ and $x$ factor through $\class{M}^\qoppa$, $X$ admits a colimit in $\cat{E}^\qoppa$.
\end{lemma}
\begin{proof}
  Write $X = (A, B, f) \co (\kappa,\preceq) \to \cat{E}^\qoppa$.
  Our colimit is $(X,Y,k)$ where $X = \colim_\kappa A$, $Y = \colim_\kappa B$, and $k$ is the composite
  \[
    \begin{tikzcd}
      T \colim_\kappa A \ar{r}{\cong} & \colim_\kappa TA \ar{r}{\colim_\kappa f} &[2em] \colim_\kappa B \rlap{.}
    \end{tikzcd}
  \]
  To check that $k\tau_X$ is in $\class{M}$, observe that \cref{transfer-chain} gives us a $\CodPtdEndo_{\cat{E}}$-algebraized $\kappa$-chain $\tau^!(X,x)$ in $\ArrFull{\cat{E}}{\class{M}}$, which we can also see as a $\CodPtdEndo_{\cat{E}}$-algebraized $\kappa$-chain in $\Arr{\cat{E}}$.
  This chain has a colimit in $\Arr{\cat{E}}$, namely $\colim_{\alpha < \kappa} f_\alpha\tau_{A_\alpha}$, which is isomorphic to $k\tau_X$.
  Since $\CodPtdEndo_{\cat{E}}$ preserves this colimit, it follows from \cref{preserved-colimit-of-algebraized-chain-is-algebra} that the colimit is a $\CodPtdEndo_{\cat{E}}$-algebra, \ie, an isomorphism.
  Hence $k\tau_X$ is an isomorphism and in particular belongs to $\class{M}$.
\end{proof}

\begin{corollary}
  \label{pointed-to-wellpointed-configuration}
  For any $(\cat{E},\class{M},\ptdendo{T}) \in \ConfP{\kappa}$, we have $(\cat{E}^\qoppa,\class{M}^\qoppa,\ptdendo{T}^\qoppa) \in \ConfWP{\kappa}$.
\end{corollary}
\begin{proof}
  Condition \ref{wellpointed-configurations:unit} is part of \cref{pointed-free-algebras:transfer} and condition \ref{wellpointed-configurations:composition} is \cref{pointed-free-algebras:composition}.
  Condition \ref{wellpointed-configurations:convergence} follows from \cref{pointed-free-algebras:end}, \ref{pointed-configurations:convergence}, and commutativity of colimits.
\end{proof}

\begin{lemma}
  \label{pointed-to-wellpointed-configuration-functor}
  The assignment of \cref{pointed-to-wellpointed-configuration} extends to a functor $\ConfP{\kappa} \to \ConfWP{\kappa}$.
\end{lemma}
\begin{proof}
  Let $(F,\gamma) \co (\cat{E}_1, \class{M}_1, \ptdendo{T_1}) \to (\cat{E}_2, \class{M}_2, \ptdendo{T_2})$ be a morphism in $\ConfP{\kappa}$.
  To see that $F$ lifts to a functor $F^\qoppa \co \cat{E}_1^\qoppa \to \cat{E}_2^\qoppa$, we examine the pullback square~\eqref{pointed-free-algebras:0} defining $\cat{E}_1^\qoppa$ and $\cat{E}_2^\qoppa$.
  Since $(F,\gamma)$ is a map of pointed endofunctors, we have an isomorphism
  \[
    \begin{tikzcd}[sep=large]
      \Comma{T_1}{\cat{E}_1} \ar{d} \ar{r}{\tau_1^!} & \Arr{\cat{E}_1} \ar{d}{\Arr{F}} \\
      \Comma{T_2}{\cat{E}_2} \ar[phantom]{ur}[sloped]{\cong}[above left]{\gamma^!} \ar{r}[below]{\tau_2^!} & \Arr{\cat{E}_2} \rlap{.}
    \end{tikzcd}
  \]
  where the left vertical functor sends $(A,B,f \co T_1A \to B)$ to $(FA,FB,Ff \circ \gamma^{-1}_A \co T_2FA \to FB)$.
  By assumption, $\Arr{F} \co \Arr{\cat{E}_1} \to \Arr{\cat{E}_2}$ restricts to a functor $\ArrFull{(\cat{E}_1)}{\class{M}_1} \to \ArrFull{(\cat{E}_2)}{\class{M}_2}$.
  Together, these induce the lift $F^\qoppa \co \cat{E}_1^\qoppa \to \cat{E}_2^\qoppa$.
  Note that $F^\qoppa$ maps $\class{M}_1^\qoppa$ to $\class{M}_2^\qoppa$, since $F$ maps $\class{M}_1$ to $\class{M}_2$.

  To extend $F^\qoppa$ to a morphism $(\cat{E}_1,\ptdendo{T_1^\qoppa}) \to (\cat{E}_2,\ptdendo{T_2^\qoppa})$ in $\PtdEndo_s$, we use the functoriality of transfer (\cref{transfer-functoriality}).
  Clearly $\Arr{F} \co \ArrFull{(\cat{E}_1)}{\class{M}_1} \to \ArrFull{(\cat{E}_2)}{\class{M}_2}$ defines a strong morphism of pointed endofunctors from $\CodPtdEndo_{\cat{E}_1}$ to $\CodPtdEndo_{\cat{E}_2}$.
  Using that $F$ preserves the pushout~\eqref{pointed-free-algebras:0.5} defining $(\tau_1)_*$, we see $F^\qoppa (\tau_1)_* \cong (\tau_2)_*\Arr{F}$ and indeed that $\Arr{F}$ pairs with $F^\qoppa$ to define a strong morphism of adjunctions from $\adjunction{\ArrFull{(\cat{E}_1)}{\class{M}_1}}{(\tau_1)_*}{(\tau_1)^!}{\cat{E}_1^\qoppa}$ to $\adjunction{\ArrFull{(\cat{E}_2)}{\class{M}_2}}{(\tau_2)_*}{(\tau_2)^!}{\cat{E}_2^\qoppa}$.
  Finally, $F^\qoppa$ preserves the pushouts in the definition of the transfer $\ptdendo{T_1^\qoppa}$ because these are levelwise cobase changes of maps in $\class{M}_1$, see \eqref{pointed-free-algebras:3.5}.
  Thus we have a transferred morphism $(\cat{E}_1,\ptdendo{T_1^\qoppa}) \to (\cat{E}_2,\ptdendo{T_2^\qoppa})$ extending $F^\qoppa$ by \cref{transfer-functoriality}.

  To see that $F^\qoppa$ preserves colimits of $(1+\alpha)$-chains in $\class{M}_1$ for any $\alpha < \kappa$ and of $\ptdendo{T_1^\qoppa}$-algebraized $\kappa$-chains, recall their construction in \cref{pointed-free-algebras:composition,pointed-free-algebras:end} and note that $F$ preserves all of the involved colimits, namely cobase changes of maps in $\class{M}_1$ and colimits of $(1+\alpha)$- and $\kappa$-chains in $\class{M}_1$.
\end{proof}

\begin{theorem} \label{pointed-free-algebras}
The functor $\ConfP{\kappa} \to \Fun_s$ sending $(\cat{E}, \class{M}, \ptdendo{T})$ to the forgetful functor $U_{\ptdendo{T}} \co \Alg{\ptdendo{T}} \to \cat{E}$ lifts through the projection $\Adju_s \to \Fun_s$ of the right adjoint.
\end{theorem}
\begin{proof}
  Combining \cref{pointed-to-wellpointed-configuration-functor} and \cref{wellpointed-free-algebras}, we have a composite functor
  \[
    \begin{tikzcd}
      \ConfP{\kappa} \ar{r} & \ConfWP{\kappa} \ar{r} & \Adju_s
    \end{tikzcd}
  \]
  sending a configuration $(\cat{E},\class{M},\ptdendo{T})$ to the free algebra adjunction for $\ptdendo{T}^\qoppa$.
  Writing $\dom \co \cat{E}^\qoppa \to \cat{E}$ for the functor sending $(A,B,f)$ to $A$, we recall from \cref{pointed-free-algebras:compare-algebras} that we have an equivalence $\Alg{\ptdendo{T}} \simeq \Alg{\ptdendo{T}^\qoppa}$ over $\cat{E}^\qoppa$, providing a factorization
  \[
    \begin{tikzcd}[column sep=large]
      \Alg{\ptdendo{T}} \ar[bend left=20]{rrr}{U_{\ptdendo{T}}} \ar{r}[below]{\simeq} & \Alg{\ptdendo{T}^\qoppa} \ar{r}[below]{U_{\ptdendo{T}^\qoppa}} & \cat{E}^\qoppa \ar{r}[below]{\dom} & \cat{E} \rlap{.}
    \end{tikzcd}
  \]
  The projection $\dom \co \cat{E}^\qoppa \to \cat{E}$ has a left adjoint sending $A$ to $(A,TA, \id_{TA} \co TA \to TA)$, so a left adjoint to $U_{\ptdendo{T}^\qoppa}$ induces a left adjoint to $U_{\ptdendo{T}}$, the object part of our desired functor $\ConfP{\kappa} \to \Adju_s$.

  For the functorial action, given $(F,\gamma) \co (\cat{E}_1,\class{M}_1,\ptdendo{T}_1) \to (\cat{E}_2,\class{M}_2,\ptdendo{T}_2)$, \cref{wellpointed-free-algebras} gives us a strong morphism of adjunctions from the free algebra adjunction for $\ptdendo{T}_1^\qoppa$ to that for $\ptdendo{T}_2^\qoppa$.
  It is straightforward to check that $F^\qoppa$ and $F$ pair to define a strong morphism of adjunctions from the pair with right adjoint $\dom \co \cat{E}_1^\qoppa \to \cat{E}_1$ to the pair with right adjoint $\dom \co \cat{E}_2^\qoppa \to \cat{E}_2$.
  Composing these yields a strong morphism of adjunctions from the free algebra adjunction for $\ptdendo{T}_1$ to that for $\ptdendo{T}_2$.
\end{proof}

\begin{theorem}
  \label{pointed-free-monad}
  
\end{theorem}
\begin{proof}
  As in \cref{wellpointed-free-monad}, this is an immediate consequence of \cref{pointed-free-algebras}, using that the free and algebraically free monad on a pointed endofunctor is given by the monad of the free algebra adjunction \cite[Proposition 22.2 and Theorem 22.3]{kelly:80}.
\end{proof}

\section{The algebraic small object argument}
\label{sec:soa}

We now use \cref{sec:free-monad-sequence} as a tool to analyze the algebraic small object argument and prove our saturation theorems.

We recall the basic theory of \textsc{awfs}'s in \cref{sec:soa:prelims}.
In \cref{sec:soa:config}, we instantiate \citeauthor{bourke-garner:16}'s results with the free monad construction from \cref{sec:free-monad-sequence} to obtain a variation of algebraic small object argument for (possibly non-cocomplete) categories equipped with a backdrop (\cref{soa-backdrop}).
In \cref{sec:soa:notions} we introduce \emph{notions of composable structure}.
A notion of composable structure on a category is a notion of structured morphism such that identities have canonical structure and structured morphisms are closed under composition.
We define what it means for a notion of composable structure to be \emph{left-connected} (following Bourke and Garner) and \emph{cellular} and observe that the double categories $\DCoalg{\comonad{L}}$ and $\DCoalg{\UnderPtd{\comonad{L}}}$ associated to an \textsc{awfs} $(\comonad{L},\monad{R})$ have these properties.
Left-connected and cellular notions of composable structure are the targets for our main theorems.

In \cref{sec:soa:points}, we return to the abstract setting of \cref{sec:free-monad-sequence} and give conditions under which, for a given notion of composable structure, structure on the unit of a pointed endofunctor yields structure on the unit of its free monad (\cref{kelly-pointed-left-category}).
Finally, we prove our main results (\cref{soa-unit-vertical-point,soa-copointed-coalgebras-functor,soa-copointed-coalgebras-double-functor}) in \cref{sec:soa:algebraic-saturation} by applying the results of \cref{sec:soa:points} to the specific case of the free monad construction that produces a cofibrantly generated \textsc{awfs}.

\subsection{Preliminaries}
\label{sec:soa:prelims}

\subsubsection{Algebraic weak factorization systems}

The original definition of \textsc{awfs} is due to \textcite{grandis-tholen:06} under the name \emph{natural weak factorization system}.
The definition we use here includes an additional distributivity condition which was introduced by Garner \cite{garner:09}.
Recall from \cref{sec:intro:results} that an \textsc{awfs} $(\comonad{L},\monad{R})$ on a category $\cat{E}$ consists of a pair  of a comonad $\comonad{L} = (L,\Phi,\Sigma)$ and monad $\monad{R} = (R,\Lambda,\Pi)$ such that $L,R$ define a functorial factorization, $\Phi \co L \to \Id$ and $\Lambda \co \Id \to R$ are of the form
\[
  \begin{tikzcd}
    X \ar[phantom]{dr}{\Phi_f} \ar{d}[left]{Lf} \ar[equals]{r} & X \ar{d}{f} \\
    Ef \ar{r}[below]{Rf} & Y
  \end{tikzcd}
  \qquad
  \begin{tikzcd}
    X \ar[phantom]{dr}{\Lambda_f} \ar{d}[left]{f} \ar{r}{Lf} & Ef \ar{d}{Rf} \\
    Y \ar[equals]{r} & Y
  \end{tikzcd}
\]
respectively, and we have a distributive law \cite{beck:69} of $\comonad{L}$ over $\monad{R}$.

\begin{remark}
  The (co)unit laws imply that the comultiplication $\Sigma$ and multiplication $\Pi$ are of the forms
  \[
    \begin{tikzcd}
      X \ar[phantomcenter]{dr}{\Sigma_f} \ar{d}[left]{Lf} \ar[equals]{r} & X \ar{d}{LLf} \\
      Ef \ar{r}[below]{\delta_f} & ELf
    \end{tikzcd}
    \qquad
    \text{and}
    \qquad
    \begin{tikzcd}
      ERf \ar[phantomcenter]{dr}{\Pi_f} \ar{d}[left]{RRf} \ar{r}{\mu_f} & Ef \ar{d}{Rf} \\
      Y \ar[equals]{r} & Y
    \end{tikzcd}
  \]
  for some $\delta \co E \to EL$ and $\mu \co ER \to E$.
\end{remark}

\begin{proposition}[{\cite[Remark 2.17]{garner:09}~\cite[Remark 2.11]{riehl:11}}]
  An \textsc{awfs} $(\comonad{L},\monad{R})$ has an underlying \textsc{wfs} whose left and right classes may be described either as
  \begin{enumerate}[label=(\roman*)]
  \item the (co)algebras for the (co)pointed endofunctors $\UnderPtd{\comonad{L}}$ and $\UnderPtd{\monad{R}}$ respectively; or
  \item the retracts in $\Arr{\cat{E}}$ of (co)algebras for the (co)monads $\comonad{L}$ and $\monad{R}$ respectively. \qed
  \end{enumerate}
\end{proposition}

\begin{notation}
  \label{awfs-forgetful-double-functors}
  Given an \textsc{awfs} $(\comonad{L},\monad{R})$ on $\cat{E}$, we write $\bm{L} \co \Arr{\cat{E}} \to \Coalg{\comonad{L}}$ and $\bm{R} \co \Arr{\cat{E}} \to \Alg{\monad{R}}$ for the functors sending a map to its cofree $\comonad{L}$-coalgebra and free $\monad{R}$-algebra respectively.
\end{notation}

\begin{notation}
  Fix an \textsc{awfs} $(\comonad{L},\monad{R})$.
  An object of $\Coalg{\UnderPtd{\comonad{L}}}$ is, by definition, a morphism $f$ in $\cat{E}$ equipped with a section of $\Phi_f \co Lf \to f$.
  Such a section is determined by a map $s$ in $\cat{E}$ fitting in the diagram
  \[
    \begin{tikzcd}
      A \ar{d}[left]{f} \ar[equals]{r} & A \ar{d}{Lf} \ar[equals]{r} &[1em] A \ar{d}{f} \\
      B \ar[bend right=25,equals]{rr} \ar[dashed]{r}{s} & Ef \ar{r}{Rf} & B \rlap{.}
    \end{tikzcd}
  \]
  As such, we speak of objects of $\Coalg{\UnderPtd{\comonad{L}}}$ as pairs $(f,s)$.
  An object of $\Coalg{\comonad{L}}$ is such a pair with the additional property that $E(\id_{A},s)s = \delta_fs \co f \to ELf$.
\end{notation}

\subsubsection{Double categories}

Bourke and Garner \cite{bourke-garner:16,bourke-garner:16-ii,bourke:23} observed that the category of (co)monad (co)algebras associated to an \textsc{awfs} can be seen as the category of vertical morphisms of a double category: identities have canonical (co)algebra structures, and (co)algebra structures on a pair of composable maps can themselves be composed.
We recall first some double-categorical terminology, then the double categories of copointed endofunctor and comonad coalgebras induced by an \textsc{awfs}.

\begin{definition}[{{\textcite[\S7.1]{grandis-pare:99}}}]
  A \emph{pseudo double category} $\dcat{A}$ is a ``pseudo category object in the 2-category of categories''.
  By this, one means that $\dcat{A}$ consists of
  \begin{enumerate}
  \item categories $\dcat{A}_0$ and $\dcat{A}_1$;
  \item functors
    \[
      \begin{tikzcd}
        \dcat{A}_0 \ar{r}[description]{\idv} &[1em] \dcat{A}_1 \ar[bend right]{l}[above]{\vdom} \ar[bend left]{l}{\vcod} & \ar{l}[above]{\vcirc} \dcat{A}_1 \times_{\dcat{A}_0} \dcat{A}_1 \rlap{,}
      \end{tikzcd}
    \]
    making the diagrams
    \[
      \begin{tikzcd}[column sep=large]
        & \dcat{A}_0 \\
        \dcat{A}_0 \ar[equals]{ur} \ar[equals]{dr} \ar{r}{\idv} & \dcat{A}_1 \ar{u}[right]{\vdom} \ar{d}{\vcod} \\
        & \dcat{A}_0
      \end{tikzcd}
      \qquad
      \begin{tikzcd}
        \dcat{A}_1 \ar{r}{\vdom} & \dcat{A}_0 \\
        \dcat{A}_1 \times_{\dcat{A}_0} \dcat{A}_1 \ar{u}{\projr} \ar{d}[left]{\projl} \ar{r}{\vcirc} & \dcat{A}_1 \ar{u}[right]{\vdom} \ar{d}{\vcod} \\
        \dcat{A}_1 \ar{r}[below]{\vcod} & \dcat{A}_0
      \end{tikzcd}
    \]
    commute strictly;
  \item natural isomorphisms $\alpha,\lambda,\rho$ witnessing the associativity and unitality of the composition $\vcirc$ and identity $\idv$ and satisfying various coherence laws.
  \end{enumerate}
  We refer to \textcite[\S7.1]{grandis-pare:99} for a complete definition.
  A \emph{double category} is a pseudo double category in which the associator and unitors for $\vcirc$ and $\idv$ are strict equalities, in which case the coherence laws are automatically satisfied.
\end{definition}

We call objects of $\dcat{A}_0$ \emph{objects of $\dcat{A}$}, morphisms of $\dcat{A}_0$ \emph{horizontal morphisms}, objects of $\dcat{A}_1$ \emph{vertical morphisms}, and morphisms in $\dcat{A}_1$ \emph{squares}.
We denote horizontal morphisms of a double category by ordinary arrows, as in $f \co A \to B$.
We write vertical morphisms of a double category in boldface, \eg, $\bm{f}$, and indicate that $\vdom \bm{f} = A$ and $\vcod \bm{f} = B$ by writing a dotted arrow, as in $\bm{f} \co A \verto B$.
We draw squares $\beta \co \bm{f} \to \bm{g}$ as squares, as in
\[
  \begin{tikzcd}
    A \ar[phantom]{dr}{\beta} \ar[verto]{d}[left]{\bm{f}} \ar{r}{h} & B \ar[verto]{d}{\bm{g}} \\
    C \ar{r}[below]{k} & D
  \end{tikzcd}
\]
where $h = \vdom \beta$ and $k = \vcod \beta$, and draw vertical and horizontal composition of squares as vertical and horizontal juxtaposition respectively.\footnote{We never draw any diagram with more than two vertical ``layers'', so there is no chance of misinterpreting vertical juxtaposition as strictly associative.}

\begin{notation}
  Given a pseudo double category $\dcat{A} = (\dcat{A}_0,\dcat{A}_1,\ldots)$, write $\VertArr{\dcat{A}}$ for its category of vertical morphisms $\dcat{A}_1$.
\end{notation}

\begin{definition}
  A double category $\dcat{A}$ is \emph{thin} when every square of $\dcat{A}$ is uniquely determined by its boundary.
\end{definition}

\begin{example}
  For any category $\cat{E}$, there is a thin \emph{double category of squares} $\Sq{\cat{E}}$ whose objects are objects of $\cat{E}$, whose horizontal and vertical morphisms are both the morphisms of $\cat{E}$, and whose squares are commutative squares in $\cat{E}$.
\end{example}

\begin{definition}
  A \emph{pseudo double functor} $F \co \dcat{A} \to \dcat{B}$ between pseudo double categories consists of functors $F_0 \co \dcat{A}_0 \to \dcat{B}_0$ and $F_1 \co \dcat{A}_1 \to \dcat{B}_1$ such that $\vdom F_1 = F_0 \vdom$ and $\vcod F_1 = F_0 \vcod$, together with horizontally invertible comparison squares
  \[
    \begin{tikzcd}[row sep=large]
      FA \ar[phantom]{dr}{\phi^{\idv}_A} \ar[verto]{d}[left]{\idv} \ar[equals]{r} & FA \ar[verto]{d}{F(\idv)} \\
      FA \ar[equals]{r} & FA
    \end{tikzcd}
    \qquad
    \begin{tikzcd}[column sep=large]
      FA \ar[phantom]{ddr}{\phi^{\vcirc}_{\bm{g},\bm{f}}} \ar[verto]{d}[left]{F\bm{f}} \ar[equals]{r} & FA \ar[verto]{dd}{F(\bm{g} \vcirc \bm{f})} \\
      FB \ar[verto]{d}[left]{F\bm{g}} \\
      FC \ar[equals]{r} & FC
    \end{tikzcd}
  \]
  satisfying naturality and coherence conditions; again, we refer to \textcite[\S7.2]{grandis-pare:99} for a complete definition.
  A \emph{double functor} is a pseudo double functor for which the comparison isomorphisms above are identities, in which case the coherence conditions are automatically satisfied.
\end{definition}

\textcite{bourke-garner:16} define a double category for (co)monad (co)algebras associated to an \textsc{awfs}.
The same definitions also yield a double category for (co)pointed endofunctor coalgebras:

\begin{definition}[{{\cite[\S2]{bourke-garner:16}}}]
  For any \textsc{awfs} $(\comonad{L},\monad{R})$ on a category $\cat{E}$, we have a double category $\DCoalg{\UnderPtd{\comonad{L}}}$ in which
  \begin{enumerate}[label=(\roman*)]
  \item objects and horizontal morphisms are objects and morphisms of $\cat{E}$ respectively;
  \item vertical morphisms $(f,s) \co A \verto B$ are morphisms $f \co A \to B$ of $\cat{E}$ equipped with an $\UnderPtd{\comonad{L}}$-coalgebra structure $s \co B \to Ef$;
  \item squares
    \[
      \begin{tikzcd}
        A \ar[verto]{d}[left]{\bm{f}} \ar{r}{h} & C \ar[verto]{d}{\bm{g}} \\
        B \ar{r}[below]{k} & D
      \end{tikzcd}
    \]
    are morphisms $(h,k) \co \bm{f} \to \bm{g}$ in $\Coalg{\UnderPtd{\comonad{L}}}$.
  \end{enumerate}
  In other words, we set $\VertArr{\DCoalg{\UnderPtd{\comonad{L}}}} \defeq \Coalg{\UnderPtd{\comonad{L}}}$.
  
  The vertical identities are given by $\idv_A \defeq (\id_A,L(\id_A)) \co A \verto A$.
  The vertical composition of morphisms $(g,t) \co B \verto C$ and $(f,s) \co A \verto B$ is given by the pair $(gf,u)$ where $u$ is the bottom horizontal composite of the rectangle
  \[
    \begin{tikzcd}
      A \ar{d}[left]{f} \ar[equals]{r} & A \ar{d}{f} \ar[equals]{r} &[2em] A \ar{d}[left]{Lf} \ar[equals]{r} &[5em] A \ar{d}[left]{L(gf)}\ar[equals]{r} & A \ar{d}{LL(gf)} \ar[equals]{r} & A \ar{dd}{L(gf)} \\
      B \ar{d}[left]{g} \ar[equals]{r} & B \ar{d}{Lg} \ar{r}{s} & Ef \ar{d}[left]{L(g \circ Rf)} \ar{r}{E(\id_A,g)} & E(gf) \ar[equals]{dr} \ar{d}[left]{LR(gf)} \ar{r}{\delta_{gf}} & EL(gf) \ar{d}{RL(gf)} \\
      C \ar{r}[below]{t} & Eg \ar{r}[below]{E(s,\id_C)} & E(g \circ Rf) \ar{r}[below]{E(E(\id_A,g),\id_C)} & ER(gf) \ar{r}[below]{\mu_{gf}} & E(gf) \ar[equals]{r} & E(gf)
    \end{tikzcd}
  \]
  which defines a section of $\Phi_{gf}$.
  We leave the unit and associativity laws for vertical composition as a tedious exercise for the reader.
  The forgetful functor from $\Coalg{\UnderPtd{\comonad{L}}}$ extends to a double functor $U_{\UnderPtd{\comonad{L}}} \co \DCoalg{\UnderPtd{\comonad{L}}} \to \Sq{\cat{E}}$.
\end{definition}

Restricting to those vertical morphisms which are coalgebras for the comonad $\comonad{L}$, we likewise have the double category $\DCoalg{\comonad{L}}$ and forgetful double functor $U_{\comonad{L}} \co \DCoalg{\comonad{L}} \to \Sq{\cat{E}}$ as defined by Bourke and Garner.
By the self-duality of \textsc{awfs}'s, we also have double categories $U_{\monad{R}} \co \DAlg{\monad{R}} \to \Sq{\cat{E}}$ and $U_{\UnderPtd{\monad{R}}} \co \DAlg{\UnderPtd{\monad{R}}} \to \Sq{\cat{E}}$ whose vertical morphisms are algebras for the monad of the \textsc{awfs} and its underlying pointed endofunctor.

\subsubsection{Cofibrant generation}

Now we can recall what it means for an \textsc{awfs} $(\comonad{L},\monad{R})$ to be cofibrantly generated by a diagram $u \co \cat{J} \to \Arr{\cat{E}}$, namely that an $\monad{R}$-algebra on a map consists of a coherent choice of lifts against $u$.

\begin{definition}[{{\cite[\S5.2]{bourke-garner:16}}}]
  Given a diagram $u \co \cat{J} \to \Arr{\cat{E}}$, write $\drlp{u} \co \drlp{\cat{J}} \to \Sq{\cat{E}}$ for the double category in which
  \begin{enumerate}[label=(\roman*)]
  \item objects and horizontal morphisms are objects and morphisms of $\cat{E}$ respectively;
  \item vertical morphisms $(f,\psi) \co X \verto Y$ are morphisms in $\cat{E}$ equipped with a choice of filler $\psi(i,\alpha)$ for each lifting problem $\alpha \co u_i \to f$ such that for every $t \co j \to i$ in $\cat{J}$ and $\alpha \co u_i \to f$, the triangle
    \[
      \begin{tikzcd}[row sep=large]
        A_j \ar[phantomcenter]{dr}{u_t} \ar{d} \ar{r} & A_i \ar[phantomcenter]{dr}{\alpha} \ar{d} \ar{r} & X \ar{d}{f} \\
        B_j \ar[dashed,bend right=10]{urr} \ar{r} & B_i \ar[dashed,bend right=15]{ur} \ar{r} & Y
      \end{tikzcd}
    \]
    formed by $\psi(i,\alpha)$ and $\psi(j,\alpha u_t)$ commutes;
  \item squares
    \[
      \begin{tikzcd}
        X \ar[phantomcenter]{dr}{\beta} \ar[verto]{d}[left]{(f,\psi)} \ar{r}{x} & X' \ar[verto]{d}{(f',\psi')} \\
        Y \ar{r}[below]{y} & Y'
      \end{tikzcd}
    \]
    are commutative squares $\beta = (x,y) \co f \to f'$ in $\cat{E}$ such that for each lifting problem $\alpha \co u_i \to f$, we have $\psi'(i,\beta\alpha) = x\psi(i,\alpha)$.
  \end{enumerate}
  For identity and composition of vertical morphisms in $\drlp{\cat{J}}$, we use the standard arguments that a class of maps defined by a lifting property contains all identities and is closed under composition.
\end{definition}

\begin{notation}[{{\emph{cf.}~\cite[Proposition 3.8]{garner:09}}}]
  Given $u \co \cat{J} \to \Arr{\cat{E}}$, we write $\rlp{\cat{J}}$ for the category of vertical morphisms of $\drlp{\cat{J}}$ and $\rlp{u} \co \rlp{\cat{J}} \to \Arr{\cat{E}}$ for the underlying map functor.
\end{notation}

\begin{definition}
  \label{cofibrantly-generated}
  Let $\cat{E}$ be a category and $u \co \cat{J} \to \Arr{\cat{E}}$ be a diagram of arrows.
  An \textsc{awfs} $(\comonad{L},\monad{R})$ on $\cat{E}$ is \emph{cofibrantly generated by $u$} when we have an isomorphism
  \[
    \begin{tikzcd}[column sep=tiny]
      \DAlg{\monad{R}} \ar{dr}[below left,pos=0.4]{U_{\monad{R}}} \ar[dashed]{rr}{\cong} & & \drlp{\cat{J}} \ar{dl}[pos=0.4]{\drlp{u}} \\
      & \Sq{\cat{E}}
    \end{tikzcd}
  \]
  of double categories.
\end{definition}

The projection sending an \textsc{awfs} to its double category of $\monad{R}$-algebras is 2-fully faithful \cite[Proposition 2]{bourke-garner:16}, so the \textsc{awfs} cofibrantly generated by a category is unique up to isomorphism if it exists.
Bourke and Garner also define what it means for an \textsc{awfs} to be generated by a \emph{double} functor $\dcat{J} \to \Sq{\cat{E}}$ \cite[\S6]{bourke-garner:16}, but we will not consider this more general case.

\subsection{Configurations for the algebraic small object argument}
\label{sec:soa:config}

We now revisit Bourke and Garner's presentation of the algebraic small object argument, which simplifies Garner's original presentation.
The construction rests on the following observation:

\begin{proposition}[{{Corollary of \cite[Theorem 6]{bourke-garner:16}}}]
  \label{bourke-garner-recognition}
  Let $u \co \cat{J} \to \Arr{\cat{E}}$ be a diagram.
  If the $\rlp{u} \co \rlp{\cat{J}} \to \Arr{\cat{E}}$ is a monadic functor, then its associated monad is, up to isomorphism, the monad of an \textsc{awfs} cofibrantly generated by $u$.
\qed
\end{proposition}

That is, we can construct an \textsc{awfs} $u \co \cat{J} \to \Arr{\cat{E}}$ by exhibiting $\rlp{u} \co \rlp{\cat{J}} \to \Arr{\cat{E}}$ as the forgetful functor for a category of algebras for a monad.
As a corollary, if $\rlp{u} \co \rlp{\cat{J}} \to \Arr{\cat{E}}$ is the forgetful functor for a category of algebras for a \emph{pointed endofunctor}, then we have an \textsc{awfs} cofibrantly generated by $u$ as soon as we can construct the algebraically free monad on that pointed endofunctor.

Bourke and Garner show that when $\cat{E}$ is locally presentable, we can always exhibit such a pointed endofunctor and construct its algebraically free monad by appeal to Kelly \cite[Proposition 16]{bourke-garner:16}.
We instead apply our results from \cref{sec:free-monad-sequence} assuming a $\kappa$-backdrop in $\cat{E}$ compatible with the generating category and a $\kappa$-smallness condition on the domains of the generators.

\begin{definition}
  When $\cat{E}$ is a category and $\class{M},\class{N}$ are wide subcategories of $\cat{E}$, write $\Class{\cat{E}}{\class{M}}{\class{N}}$ for the wide subcategory of $\Arr{\cat{E}}$ consisting of squares $f \to g$ of the form
  \[
    \begin{tikzcd}[column sep=large]
      A \ar{d}[left]{f} \ar{r}{\in \class{M}} & C \ar{d}{g} \\
      B \ar{r}[below]{\in \class{N}} & D \rlap{.}
    \end{tikzcd}
  \]
\end{definition}

\begin{definition}[{{\textcite{kock:66}}}]
  \label{density-comonad}
  Given a functor $v \co \cat{C} \to \cat{D}$, its \emph{density comonad} $\Den{v} \co \cat{D} \to \cat{D}$ is (when it exists) the pointwise left Kan extension $\Lan_v v \co \cat{D} \to \cat{D}$ of $v$ along itself:
  \[
    \Den{v}(d) = \colim \left(
    \begin{tikzcd}[cramped,ampersand replacement=\&,sep=small]
    \Comma{v}{d} \ar{r}{\proj{}} \& \cat{C} \ar{r}{v} \&[1em] \cat{D}
    \end{tikzcd}\right) \rlap{.}
  \]
\end{definition}

\begin{definition}
  \label{compatible}
  Let $\cat{E}$ be a category, $\kappa > 0$ be a limit ordinal, and $\class{M}$ be a $\kappa$-backdrop in $\cat{E}$.
  Given a diagram $u \co \cat{J} \to \Arr{\cat{E}}$, we say \emph{$u$ is compatible with $\class{M}$} if
  \begin{enumerate}[label=(\alph*),ref=\thedefinition(\alph*)]
  \item \label{backdrop:density} $u$ admits a density comonad $\Den{u} \co \Arr{\cat{E}} \to \Arr{\cat{E}}$ valued in $\class{M}$;
  \item \label{backdrop:density-leibniz} the pushout application $\leibpushapp{\Den{u}} \co \Arr{(\Arr{\cat{E}})} \to \Arr{\cat{E}}$ sends squares in $\LevelClass{\cat{E}}{\class{M}}$ to morphisms in $\class{M}$.
  \end{enumerate}
\end{definition}

Condition \ref{backdrop:density} implies that the pushout application $\leibpushapp{\Den{u}}$ is defined, by assumption that $\class{M}$ is a backdrop.

\begin{example}
  For any cocomplete category $\cat{E}$, diagram $u \co \cat{J} \to \Arr{\cat{E}}$, and limit ordinal $\kappa > 0$, $u$ is compatible with the $\kappa$-backdrop consisting of all arrows in $\cat{E}$.
\end{example}

We use the following proposition to construct uniform fibrations \textsc{awfs}'s in \cref{sec:uniform-fibrations}.

\begin{proposition}
  \label{union-backdrop-compatible}
  Let $\class{M}$ be a $\kappa$-backdrop in a category $\cat{E}$ and suppose $\class{M}$ is closed under binary unions.
  For $u \co \cat{J} \to \Arr{\cat{E}}$ to be compatible with $\class{M}$, it suffices (but is not necessary) that $\Den{u}$ exists, is valued in $\class{M}$, and sends squares in $\LevelClass{\cat{E}}{\class{M}}$ to cartesian squares in $\LevelClass{\cat{E}}{\class{M}}$.
\end{proposition}
\begin{proof}
  Under these hypotheses, given a square $(h,k) \co f \to g$ with $h,k$ in $\class{M}$, $\leibpushapp{\Den{u}}(h,k)$ computes a union of maps in $\class{M}$ and is thus in $\class{M}$ itself.
\end{proof}

\begin{example}
  The $\kappa$-backdrop of monomorphisms (\cref{monomorphisms-backdrop}) in an adhesive and $\kappa$-exhaustive category is closed under binary unions \cite[Theorem 5.1]{lack-sobocinski:05}, so \cref{union-backdrop-compatible} can be used in this case.
  The same goes for the $\omega$-backdrop of complemented monomorphisms (\cref{complemented-backdrop}) in a category with coproducts \cite[Lemma 2.1.7(i)]{gambino-sattler-szumilo:22}, and likewise for the $\omega$-backdrop of levelwise complemented monomorphisms in a functor category $\Func{\cat{C}}{\cat{E}}$ where $\cat{E}$ has coproducts.
\end{example}

\begin{definition}
  \label{small-object}
  Let $\cat{E}$ be a category with a $\kappa$-backdrop $\class{M}$.
  An object $A \in \cat{E}$ is \emph{$(\kappa,\class{M})$-small} when $\Hom{\cat{E}}{A}{-}$ preserves colimits of $\kappa$-chains in $\class{M}$.
\end{definition}

\begin{definition}
  \label{one-step-endofunctor}
  Let $\cat{E}$ be a category, $\kappa > 0$ be a limit ordinal, $\class{M}$ be a $\kappa$-backdrop in $\cat{E}$, and $u \co \cat{J} \to \Arr{\cat{E}}$ be compatible with $\class{M}$.
  We define a pointed endofunctor $\SplPtdEndo[u]_{\cat{E}} = (\SplEndo[u]_{\cat{E}},\splpt[u])$ on $\Arr{\cat{E}}$ by the pushout diagram
  \[
    \begin{tikzcd}[column sep=large]
      \Den{u} \ar{d}[left]{\epsilon} \ar{r}{\codpt\,\Den{u}} & \CodEndo_{\cat{E}} \Den{u} \ar[dashed]{d} \\
       \Id_{\Arr{\cat{E}}} \ar[dashed]{r}[below]{\splpt[u]} & \pushout \SplEndo[u]_{\cat{E}} \rlap{,}
    \end{tikzcd}
  \]
  where $\CodPtdEndo_{\cat{E}}$ is the pointed endofunctor of \cref{codomain-endofunctor} and $\epsilon$ is the counit of the density comonad.
  Note that this pushout exists by assumption that $u$ is compatible with $\class{M}$.
\end{definition}

Unpacked, $\SplEndo[u]_{\cat{E}}$ sends a morphism $f \co A \to B$ to the pushout gap map
\[
  \begin{tikzcd}[sep=large]
    \dom \Den{u}f \ar{d}[left]{\Den{u}f} \ar{r}{\dom \epsilon_f} & A \ar[dashed]{d} \ar[bend left=30]{ddr}{f} \\
    \cod \Den{u}f \ar[bend right=20]{drr}[below]{\cod \epsilon_f} \ar[dashed]{r} & \pushout \object \ar[dashed]{dr}[pos=0.4,description]{\SplEndo[u]_{\cat{E}}f} \\
    & & B \rlap{.}
  \end{tikzcd}
\]
of the counit of the density comonad, while the unit map $\splpt[u] \colon f \to \SplEndo[u]_{\cat{E}}f$ is the right triangle in the diagram above.

\begin{proposition}
  Let $\cat{E}$ be a category, $\kappa > 0$ be a limit ordinal, $\class{M}$ be a $\kappa$-backdrop in $\cat{E}$, and $u \co \cat{J} \to \Arr{\cat{E}}$ be compatible with $\class{M}$.
  Then we have an isomorphism $\rlp{\mathsf{}\cat{J}} \cong \Alg{\SplPtdEndo[u]_{\cat{E}}}$ over $\Arr{\cat{E}}$.
\end{proposition}
\begin{proof}
  A $\SplPtdEndo[u]_{\cat{E}}$-algebra structure $\SplPtdEndo[u]_{\cat{E}}f \to f$ on $f \colon A \to B$ is determined by a map $h \colon \cod \Den{u} f \to A$ fitting into the diagram
  \begin{equation}
    \label{split-is-lifting-diagram}
    \begin{tikzcd}[column sep=large]
      A \ar{d}[left]{f} \ar{r}{\inl} & A \sqcup_{\dom \Den{u} f} \cod \Den{u} f \ar{d}{\SplPtdEndo[u]_{\cat{E}}{f}} \ar[dashed]{r}{\copair{\id_A}{h}} & A \ar{d}{f} \\
      B \ar[equal]{r} & B \ar[equal]{r} & B \rlap{.}
    \end{tikzcd}
  \end{equation}
  Such a map consists, by universal property of the colimit defining $\cod \Den{u}$, of an assignment sending each $i \in \cat{J}$ and $\alpha \colon u_i \to f$ to a map $h\inj{\alpha} \co \cod u_i \to A$, coherently with respect to the morphisms of $\cat{J}$, and the diagram \eqref{split-is-lifting-diagram} requires precisely that the diagrams
  \[
    \begin{tikzcd}[column sep=large]
      \dom u_i \ar{d}[left]{u_i} \ar{r}{\dom \alpha} & A \ar{d}{f} \\
      \cod u_i \ar[dashed]{ur}[description]{h\inj{\alpha}} \ar{r}[below]{\cod \alpha} & B
    \end{tikzcd}
  \]
  commute, \ie, that each output is a solution to the input lifting problem.
  A morphism of $\SplPtdEndo[u]_{\cat{E}}$-algebras is similarly seen to correspond to a morphism of $\rlp{\cat{J}}$.
\end{proof}

\begin{lemma}
  \label{small-to-splendo}
  Let $\cat{E}$ be a cocomplete category with a $\kappa$-backdrop $\class{M}$ and let $u \co \cat{J} \to \Arr{\cat{E}}$ be a diagram compatible with $\class{M}$.
  If $\dom u \co \cat{J} \to \cat{E}$ is levelwise $(\kappa,\class{M})$-small for some $\kappa > 0$, then $\SplEndo[u]_{\cat{E}}$ preserves colimits of $\kappa$-chains in $\DomClass{\cat{E}}{\class{M}}$.
\end{lemma}
\begin{proof}
  We have $\Hom{\Arr{\cat{E}}}{u_i}{-} \cong \Hom{\cat{E}}{\dom u_i}{\dom (-)} \times_{\Hom{\cat{E}}{\dom u_i}{\cod (-)}} \Hom{\cat{E}}{\cod u_i}{\cod (-)}$.
  From the assumption that $\Hom{\cat{E}}{\dom u_i}{-}$ preserves colimits of $\kappa$-chains in $\class{M}$, it thus follows that $\Hom{\Arr{\cat{E}}}{u_i}{-}$ preserves colimits of $\kappa$-chains in $\DomClass{\cat{E}}{\class{M}}$.
  For any $\kappa$-chain $f \co (\kappa,\preceq) \to \DomClass{\cat{E}}{\class{M}}$, we then have $\colim_{\alpha < \kappa}(\Comma{u}{f_\alpha}) \cong \Comma{u}{(\colim_{\alpha < \kappa}f_\alpha)}$.
  The result then follows from the colimit formula for $\Den{u}$ (\cref{density-comonad}).
\end{proof}

\begin{theorem}[Algebraic small object argument with a backdrop]
  \label{soa-backdrop}
  
\end{theorem}
\begin{proof}
  By Bourke and Garner's result (\cref{bourke-garner-recognition}), if the algebraically free monad on $\SplPtdEndo[u]_{\cat{E}}$ exists, then it is the monad of an $\textsc{awfs}$ cofibrantly generated by $u$.
  To construct the free monad on $\SplPtdEndo[u]_{\cat{E}}$, it suffices to check that $(\Arr{\mathsf{}\cat{E}},\DomClass{\mathsf{}\cat{E}}{\class{M}},\SplPtdEndo[u]_{\cat{E}}) \in \ConfP{\kappa}$:
  \begin{description}
  \item{\ref{pointed-configurations:backdrop}}
    $\DomClass{\cat{E}}{\class{M}}$ is a $\kappa$-backdrop in $\Arr{\cat{E}}$ because $\class{M}$ is a $\kappa$-backdrop in $\cat{E}$.
  \item{\ref{pointed-configurations:unit}}
    $\splpt[u]$ is valued in $\DomClass{\cat{E}}{\class{M}}$. This follows from \ref{backdrop:density} by the closure of $\DomClass{\cat{E}}{\class{M}}$ under cobase change in $\Arr{\cat{E}}$.
  \item{\ref{pointed-configurations:leibniz-application}}
    $\DomClass{\cat{E}}{\class{M}}$ is closed under $\leibpushapp{\splpt[u]}$.
    By cocontinuity of Leibniz pushout application in the natural transformation argument \cite[Lemma 4.8]{riehl-verity:14} and the definition of $\SplPtdEndo[u]_{\cat{E}}$, pushout application of $\splpt[u]$ is a cobase change of pushout application of $\codpt\,\Den{u}$.
  By calculation, pushout application of $\codpt\,\Den{u}$ is isomorphic to $\codpt\,\leibpushapp{\Den{u}}$, which preserves $\DomClass{\cat{E}}{\class{M}}$ by \ref{backdrop:density}, so the property follows from the closure of $\DomClass{\cat{E}}{\class{M}}$ under cobase change in $\Arr{\cat{E}}$.
  \item{\ref{pointed-configurations:convergence}}
    $\SplEndo[u]_{\cat{E}}$ preserves colimits of $\kappa$-chains in $\DomClass{\cat{E}}{\class{M}}$ by \cref{small-to-splendo}.
  \end{description}

  \Cref{pointed-free-monad} thus provides an algebraically free monad $\monad{R} = (R,\Lambda,\Pi)$ on $\SplPtdEndo[u]_{\cat{E}}$ whose unit is valued in $\DomClass{\cat{E}}{\class{M}}$, and by \cref{bourke-garner-recognition} we have an \textsc{awfs} $(\comonad{L},\monad{R})$.
\end{proof}

\begin{notation}
  \label{density-to-comonad-transformation}
  In the situation of \cref{soa-backdrop}, we have a transformation $\codpt\, \Den{u} \to \Lambda$ given by the composite
  \[
    \begin{tikzcd}
      \Den{u} \ar{d}[left]{\codpt\,\Den{u}} \ar{r}{\epsilon} & \Id_{\Arr{\cat{E}}} \ar{d}{\splpt[u]} \ar[equals]{r} & \Id_{\Arr{\cat{E}}} \ar{d}{\Lambda}  \\
      \CodEndo_{\cat{E}} \Den{u} \ar{r} & \pushout \SplEndo[u]_{\cat{E}} \ar{r} & R \rlap{.}
    \end{tikzcd}
  \]
  We write $\oneStep{u} \co \Den{u} \to L$ for its whiskering with the domain projection.
\end{notation}

\subsection{Notions of composable structure}
\label{sec:soa:notions}

At this point we begin working in earnest towards a saturation principle for cofibrantly generated \textsc{awfs}'s.
First, we abstract from the situation of $U_{\UnderPtd{\comonad{L}}} \co \DCoalg{\UnderPtd{\comonad{L}}} \to \Sq{\cat{E}}$ to define a class of \emph{composable notions of structure} on maps in $\cat{E}$.
We then define what it means for such a notion of structure to be cellular.

The double categories $\DCoalg{\UnderPtd{\comonad{L}}}$ and $\DCoalg{\comonad{L}}$ (and their duals $\DAlg{\UnderPtd{\monad{R}}}$ and $\DAlg{\monad{R}}$) are what Bourke and Garner call \emph{concrete double categories}, in which the vertical morphisms are horizontal morphisms equipped with some structure.
To suit our desired generality, we work with a slightly different abstraction which we call a \emph{notion of composable structure}, imposing some additional sanity conditions but allowing for non-thin and pseudo double categories.\footnote{%
  The name mimics Shulman's \emph{notions of fibred structure} \cite[\S3]{shulman:19-all}.
  Both axiomatize notions of structure on the morphisms of a category, but \emph{fibred} structures are closed under base change while \emph{composable} structures can be composed.
}
Permitting non-thin double categories is necessary for our application to constructing extension operations (\cref{sec:applications:extension}).

\begin{definition}
  \label{notion-of-composable-structure}
  A \emph{notion of composable structure} on a category $\cat{E}$ is a double functor $U \co \dcat{A} \to \Sq{\cat{E}}$ from a pseudo double category $\dcat{A}$ such that $U_0 \co \dcat{A}_0 \to \cat{E}$ is an identity (in particular $\dcat{A}_0 = \cat{E}$) and $\VertArr{U} \co \VertArr{\dcat{A}} \to \Arr{\cat{E}}$ is a conservative isofibration.
\end{definition}

\begin{notation}
  Following \citeauthor{bourke-garner:16}, when we write a vertical morphism in a notion of composable structure $U \co \dcat{A} \to \Sq{\cat{E}}$ in boldface, \eg $\bm{f} \co A \verto B$, we implicitly bind the same letter without boldface to its underlying morphism, \eg using $f \co A \to B$ for $U\bm{f}$.
\end{notation}

\begin{proposition}
  For any \textsc{awfs} $(\comonad{L},\monad{R})$ on a category $\cat{E}$, the projections $\DCoalg{\comonad{L}} \to \Sq{\cat{E}}$ and $\DCoalg{\UnderPtd{\comonad{L}}} \to \Sq{\cat{E}}$ are notions of composable structure.
\qed
\end{proposition}

\begin{definition}[{\cite[\S3.5]{bourke-garner:16} or \cite[\S2.5]{bourke-garner:16-ii}}]
  A pseudo double category $\dcat{A}$ is \emph{left-connected} when $\idv \co \dcat{A}_0 \to \VertArr{\dcat{A}}$ is left adjoint to $\vdom \co \VertArr{\dcat{A}} \to \dcat{A}_0$.
  In this case, the counit $\leftcnx \co \idv \circ \vdom \to \Id$ at $\bm{f} \co A \verto B$ defines a square
  \begin{equation}
    \label{left-connection}
    \begin{tikzcd}
      A \ar[phantomcenter]{dr}{\leftcnx} \ar[verto]{d}[left]{\idv} \ar[equals]{r} & A \ar[verto]{d}{\bm{f}} \\
      A \ar[dashed]{r} & B
    \end{tikzcd}
  \end{equation}
  which we call a \emph{left connection}.

  Any left-connected pseudo double category $\dcat{A}$ induces a double functor $\dcat{A} \to \Sq{\dcat{A}_0}$ which is the identity on horizontal morphisms and sends $\bm{f} \co A \verto B$ to the dashed morphism of \eqref{left-connection} \cite[\S3.5]{bourke-garner:16}.
  We say that a notion of composable structure $U \co \dcat{A} \to \Sq{\cat{E}}$ is \emph{left-connected} when $\dcat{A}$ is left-connected and $U$ is the double functor induced by the left connection.
\end{definition}

\begin{proposition}[{{see \cite[Theorem 6(ii)]{bourke-garner:16}}}]
  For any \textsc{awfs} $(\comonad{L},\monad{R})$ on $\cat{E}$, the notions of composable structure $U_{\comonad{L}} \co \DCoalg{\comonad{L}} \to \Sq{\cat{E}}$ and $U_{\UnderPtd{\comonad{L}}} \co \DCoalg{\UnderPtd{\comonad{L}}} \to \Sq{\cat{E}}$ are left-connected.
\qed
\end{proposition}

\begin{notation}
  When $U \co \dcat{A} \to \Sq{\cat{E}}$ is a notion of composable structure and $\class{M},\class{N}$ are wide subcategories of $\cat{E}$, write $\DClass{\dcat{A}}{\class{M}}{\class{N}}$ for the wide subcategory of $\VertArr{\dcat{A}}$ given by the pullback
  \[
    \begin{tikzcd}[column sep=large]
      \DClass{\dcat{A}}{\class{M}}{\class{N}} \pullback \ar{d} \ar[hook]{r} & \VertArr{\dcat{A}} \ar{d}{\VertArr{U}} \\
      \Class{\cat{E}}{\class{M}}{\class{N}} \ar[hook]{r} & \Arr{\cat{E}} \rlap{.}
    \end{tikzcd}
  \]
\end{notation}

\begin{definition}
  \label{cellular}
  Let $\class{M}$ be a $\kappa$-backdrop in a category $\cat{E}$.
  A notion of composable structure $U \co \dcat{A} \to \Sq{\cat{E}}$ is \emph{$(\kappa,\class{M})$-cellular} if
  \begin{enumerate}
  \item $(1 + \alpha)$-chains in $\DCodClass{\dcat{A}}{\class{M}}$ have colimits in $\VertArr{\dcat{A}}$ for $\alpha \preceq \kappa$ and $\VertArr{U}$ preserves these;
  \item $\DCodClass{\dcat{A}}{\class{M}}$ is closed under cobase change in $\VertArr{\dcat{A}}$ and $\VertArr{U}$ preserves these pushouts.
  \end{enumerate}
  Equivalently, $U$ is $(\kappa,\class{M})$-cellular when $\DCodClass{\dcat{A}}{\class{M}}$ is a $\kappa$-backdrop in $\VertArr{\dcat{A}}$ and $\VertArr{U}$ is a $\kappa$-backdrop-preserving functor $(\VertArr{\dcat{A}},\DCodClass{\dcat{A}}{\class{M}}) \longrightarrow (\Arr{\cat{E}},\CodClass{\cat{E}}{\class{M}})$.
\end{definition}

\begin{proposition}
  \label{coalgebra-double-category-cellular}
  For any \textsc{awfs} $(\comonad{L},\monad{R})$ on a category $\cat{E}$, the notions of composable structure $U_{\comonad{L}} \co \DCoalg{\comonad{L}} \to \Sq{\cat{E}}$ and $U_{\UnderPtd{\comonad{L}}} \co \DCoalg{\UnderPtd{\comonad{L}}} \to \Sq{\cat{E}}$ are $(\kappa,\class{M})$-cellular for every limit ordinal $\kappa > 0$ and $\kappa$-backdrop $\class{M}$.
\end{proposition}
\begin{proof}
  The forgetful functors associated to a comonad or copointed endofunctor create \emph{any} colimits existing in the base category.
  See, \eg, \textcite[Proposition 4.3.1]{borceux:94-2} for the (co)monad case; the (co)pointed endofunctor case goes by the same argument.
\end{proof}

\subsection{Structured points on free monads}
\label{sec:soa:points}

We want to extend an assignment of $\dcat{A}$-structures from the generators of a cofibrantly generated \textsc{awfs} $(\comonad{L},\monad{R})$ to the left factor $Lf$ of every map $f$.
In the algebraic small object argument, $L$ arises indirectly as the domain component of the unit $\Gh$ of the monad $\monad{R}$.
We abstract away the details of this case and consider the problem of extending an assignment of $\dcat{A}$-structures from the unit of a pointed endofunctor to the unit of its free monad.

We define ``$\dcat{A}$-structures on a unit'' in a somewhat indirect way that will be easiest to manipulate in our proofs; we unpack the intuition in \cref{vertical-point-unpack} below.

\begin{notation}
  Let $\cat{A} \overset{F}{\to} \cat{C} \overset{G}{\leftarrow} \cat{B}$ be a cospan of categories.
  Let $\ptdendo{T} = (T,\tau)$ be a pointed endofunctor on $\cat{A}$ such that $F$ defines a strict morphism of pointed endofunctors $(\cat{A},\ptdendo{T}) \to (\cat{C},\IdPtdEndo{\cat{C}})$.
  We write $\ptdendo{T} \times_{\cat{C}} \cat{B}$ for the pointed endofunctor on $\cat{A} \times_{\cat{C}} \cat{B}$ sending a pair $(A,B) \in \cat{A} \times_{\cat{C}} \cat{B}$ to $(TA,B)$, with evident unit.
  When $\monad{M}$ is a monad and $F$ defines a strict morphism of monads $(\cat{A},\monad{M}) \to (\cat{C},\IdMonad{\cat{C}})$, we write $\monad{M} \times_{\cat{C}} \cat{B}$ for the analogous monad.
\end{notation}

\begin{definition}
  Let $\ptdendo{T}$ be a pointed endofunctor on $\cat{E}$ and $U \co \dcat{A} \to \Sq{\cat{E}}$ be a notion of composable structure.
  An \emph{$\dcat{A}$-point $(\overline{\ptdendo{T}},\theta)$ over $\ptdendo{T}$} consists of a pointed endofunctor $\overline{\ptdendo{T}}$ on $\VertArr{\dcat{A}}$ for which $\vdom \co (\VertArr{\dcat{A}},\overline{\ptdendo{T}}) \to (\cat{E},\Id_{\cat{E}})$ and $\vcod \co (\VertArr{\dcat{A}},\overline{\ptdendo{T}}) \to (\cat{E},\ptdendo{T})$ are strict morphisms of pointed endofunctors, together with a natural isomorphism $\Gth$ making $({\vcirc},\theta) \co (\VertArr{\dcat{A}} \times_{\mathsf{}\cat{E}} \VertArr{\dcat{A}},\overline{\ptdendo{T}} \times_{\cat{E}} \VertArr{\dcat{A}}) \to (\VertArr{\dcat{A}},\overline{\ptdendo{T}})$ a strong morphism of pointed endofunctors.

  If $\Gg \co \ptdendo{T} \to \ptdendo{T'}$ is a morphism of pointed endofunctors on $\cat{E}$ and $(\overline{\ptdendo{T}},\theta)$ and $(\overline{\ptdendo{T}'},\theta')$ are $\dcat{A}$-points over $\ptdendo{T}$ and $\ptdendo{T'}$ respectively, then a \emph{morphism of $\dcat{A}$-points} $(\overline{\ptdendo{T}},\theta) \to (\overline{\ptdendo{T}'},\theta')$ over $\Gg$ is a morphism of pointed endofunctors $\overline{\Gg} \co \overline{\ptdendo{T}} \to \overline{\ptdendo{T'}}$ such that $\vdom \overline{\Gg} = \id$, $\vcod \overline{\Gg} = \Gg$, and
  \[
    \begin{tikzcd}[column sep=7em]
      \overline{T}(\projl{-}) \vcirc (\projr{-}) \ar{d}[left]{\theta} \ar{r}{\overline{\gamma}(\projl{-}) \vcirc (\projr{-})} & \overline{T'}(\projl{-}) \vcirc (\projr{-}) \ar{d}{\theta'} \\
      \overline{T}(\projl{-} \vcirc \projr{-}) \ar{r}[below]{\overline{\gamma}(\projl{-} \vcirc \projr{-})} & \overline{T'}(\projl{-} \vcirc \projr{-})
    \end{tikzcd}
  \]
  commutes.

  Mutatis mutandis, we have definitions of \emph{$\dcat{A}$-point $(\overline{\monad{M}},\theta)$ over a monad $\monad{M}$} and \emph{morphism of $\dcat{A}$-points $\overline{\Gg} \co (\overline{\monad{M}},\theta) \to (\overline{\monad{M'}},\theta')$ over a morphism of monads $\Gg \co \monad{M} \to \monad{M'}$}.
\end{definition}

\begin{remark}
  \label{vertical-point-unpack}
  Let $(T,\tau)$ be a pointed endofunctor and $((\overline{T},\overline{\tau}),\theta)$ be an $\dcat{A}$-point over $(T,\tau)$.
  Since $\theta$ tells us that $\overline{T}(\bm{f}) \cong \overline{T}(\idv_B \vcirc \bm{f}) \cong \overline{T}(\idv_B) \vcirc \bm{f}$ for $\bm{f} \co A \verto B$, we see that $\overline{T}$ is defined up to isomorphism by its restriction to vertical identities.
  For a given $A \in \cat{E}$, $\overline{\tau}_{\idv_A}$ is a square of the form
  \[
    \begin{tikzcd}
      A \ar{d}[left]{\idv_{A}} \ar[equals]{r} & A \ar{d}{\overline{T}(\idv_A)} \\
      A \ar{r}[below]{\tau_A} & TA \rlap{.}
    \end{tikzcd}
  \]
  When $U$ is left-connected, there is only one such square of this form, namely the left connection for $\overline{T}(\idv_A)$.
  Conversely, again when $U$ is left-connected, any functor $\bm{\tau} \co \cat{E} \to \VertArr{\dcat{A}}$ such that $\VertArr{U}\bm{\tau} = \tau$ determines an $\dcat{A}$-point $((\bm{\Gt}_!,\leftcnx_!),\alpha)$ over $\ptdendo{T}$ where $\bm{\Gt}_!(\bm{f}) = \bm{\Gt}_B \vcirc \bm{f}$, $\leftcnx_!$ sends $\bm{f}$ to the square
    \[
      \begin{tikzcd}
        A \ar[phantomcenter]{ddr}{\rho} \ar[verto]{dd}[left]{\bm{f}} \ar[equals]{r} & A \ar[phantomcenter]{dr}{\id_{\bm{f}}} \ar[verto]{d}[left]{\bm{f}} \ar[equals]{r} & A \ar[verto]{d}{\bm{f}} \\
        & B \ar[phantomcenter]{dr}{\leftcnx} \ar[verto]{d}[left]{\idv} \ar[equals]{r} & B \ar[verto]{d}{\bm{\Gt}_B} \\
        B \ar[equals]{r} & B \ar{r}[below]{\Gt_B} & TB
      \end{tikzcd}
    \]
    where $\rho$ is the vertical unitor for $\dcat{A}$, and $\alpha$ is the associator.
    Thus, we can think of an $\dcat{A}$-point over $(T,\tau)$ as a lift of $\tau$ through the vertical arrow category.
\end{remark}

We use \cref{pointed-free-monad}---and in particular fine-grained functoriality---to show that for \emph{cellular} notions of composable structure $U \co \dcat{A} \to \Sq{\cat{E}}$, an $\dcat{A}$-point over a pointed endofunctor induces a canonical $\dcat{A}$-point over its free monad.

\begin{theorem}
  \label{kelly-pointed-left-category}
  Let $(\cat{E},\class{M},\ptdendo{T}) \in \ConfP{\kappa}$ be a configuration for the free monad sequence on a pointed endofunctor.
  Let $U \co \dcat{A} \to \Sq{\cat{E}}$ be a $(\kappa,\class{M})$-cellular notion of composable structure.
  For any $\dcat{A}$-point $(\overline{\ptdendo{T}},\theta)$ over $\ptdendo{T}$, we have the following:
  \begin{enumerate}[label=(\roman*)]
  \item the free monad $\monad{M}$ on $\ptdendo{T}$ admits an $\dcat{A}$-point $(\overline{\monad{M}},\psi)$;
  \item the canonical morphism $\Gg \co \ptdendo{T} \to \UnderPtd{\monad{M}}$ lifts to a morphism of $\dcat{A}$-points $\overline{\Gg} \co (\ptdendo{T},\theta) \to (\UnderPtd{\overline{\monad{M}}},\psi)$;
  \item for any monad morphism $\Ga \co \monad{M} \to \monad{M'}$, $\dcat{A}$-point $(\overline{\monad{M'}},\psi')$ over $\monad{M'}$, and $\overline{\Gb} \co (\overline{\ptdendo{T}},\theta) \to (\UnderPtd{\overline{\monad{M'}}},\psi')$ over $\Ga\Gg \co \ptdendo{T} \to \UnderPtd{\monad{M}}$, there is a unique $\overline{\Ga} \co (\overline{\monad{M}},\psi) \to (\overline{\monad{M'}},\psi')$ over $\Ga$ such that $\overline{\Ga}\,\overline{\Gg} = \overline{\Gb}$.
  \end{enumerate}
\end{theorem}
\begin{proof}
  We check that $\overline{\ptdendo{T}} = (\overline{T},\overline{\tau})$ defines a configuration $(\VertArr{\dcat{A}},\DCodClass{\dcat{A}}{\class{M}},\overline{\ptdendo{T}}) \in \ConfP{\kappa}$.
  \begin{description}
  \item{\ref{pointed-configurations:backdrop}} $\DCodClass{\dcat{A}}{\class{M}}$ is a $\kappa$-backdrop in $\VertArr{\dcat{A}}$ by the assumption that $U$ is $(\kappa,\class{M})$-cellular.
  \item{\ref{pointed-configurations:unit}}
    $\overline{\tau}$ is valued in $\DCodClass{\dcat{A}}{\class{M}}$ because $\Gt$ is valued in $\class{M}$, by definition of $\ConfP{\kappa}$.
  \item{\ref{pointed-configurations:leibniz-application}}
    Given a square $(h,k) \co \bm{f} \to \bm{g}$ in $\DCodClass{\dcat{A}}{\class{M}}$, the pushout application $\leibpush{\overline{\Gt}}(h,k)$ is a square of the form
    \[
      \begin{tikzcd}[column sep=huge, row sep=large]
        \bullet \ar[phantomcenter]{dr}{\leibpush{\overline{\Gt}}(h,k)} \ar[verto]{d}[left]{\bm{g} \sqcup_{\bm{f}} \overline{T}{\bm{f}}} \ar[equals]{r} & \bullet \ar[verto]{d}{\overline{T}{\bm{g}}} \\
        \bullet \ar{r}[below]{\leibpushapp{\Gt}(h,k)} & \bullet \rlap{,}
      \end{tikzcd}
    \]
    so belongs to $\DCodClass{\dcat{A}}{\class{M}}$  by definition of $\ConfP{\kappa}$.
  \item{\ref{pointed-configurations:convergence}}
    The functor $\overline{T}$ preserves colimits of $\kappa$-chains in $\DCodClass{\dcat{A}}{\class{M}}$ because $T$ preserves colimits of $\kappa$-chains in $\class{M}$ (by definition of $\ConfP{\kappa}$) and $\VertArr{U}$ is conservative.
  \end{description}
  By definition, the projections $\vdom,\vcod \co \VertArr{\dcat{A}} \to \dcat{A}$ form a span
  \[
    \begin{tikzcd}[column sep=large]
      (\textsf{}\cat{E},\cong,\IdPtdEndo{\cat{E}}) & \ar{l}[above]{\vdom} (\VertArr{\dcat{A}},\DCodClass{\dcat{A}}{\class{M}},\ptdendo{\overline{T}}) \ar{r}{\vcod} & (\cat{E},\class{M},\ptdendo{T})
    \end{tikzcd}
  \]
  in $\ConfP{\kappa}$.
  By \cref{pointed-free-monad}, applying the free monad construction yields a span
  \begin{equation}
    \label{double-category-monad-span}
    \begin{tikzcd}[column sep=large]
      \IdMonad{\cat{E}} & \ar{l}[above]{\vdom} \overline{\monad{M}} \ar{r}{\vcod} & \monad{M}
    \end{tikzcd}
  \end{equation}
  in $\Mnd_s$ where $\monad{M}$ is the free monad on $\ptdendo{T}$ and $\overline{M}$ is a monad on $\VertArr{\dcat{A}}$.
  Because $\VertArr{U} \co \VertArr{\dcat{A}} \to \Arr{\cat{E}}$ is an isofibration, we can assume without loss of generality that the associated isomorphisms $\vdom \circ \overline{M} \cong \Id \circ \vdom$ and $\vcod \circ \overline{M} \cong M \circ \vcod$ are equalities.

  The argument above also shows that $(\VertArr{\dcat{A}} \times_{\mathsf{}\cat{E}} \VertArr{\dcat{A}}, \DCodClass{\dcat{A}}{\class{M}} \times_{\mathsf{}\cat{E}} \DCodClass{\dcat{A}}{\cong}, \overline{\ptdendo{T}} \times_{\cat{E}} \VertArr{\dcat{A}}) \in \ConfP{\kappa}$.
  We have morphisms of configurations
  \[
    \begin{tikzcd}[row sep=large]
      & (\VertArr{\dcat{A}} \times_{\mathsf{}\cat{E}} \VertArr{\dcat{A}}, \DCodClass{\dcat{A}}{\class{M}} \times_{\mathsf{}\cat{E}} \DCodClass{\dcat{A}}{\cong}, \overline{\ptdendo{T}} \times_{\cat{E}} \VertArr{\dcat{A}}) \ar{dl}{\projl} \ar{d}[left]{(\vcirc,\theta)} \ar{dr}[below left]{\projr} \\
      (\VertArr{\dcat{A}},\DCodClass{\dcat{A}}{\class{M}},\overline{\ptdendo{T}}) & (\VertArr{\dcat{A}},\DCodClass{\dcat{A}}{\class{M}},\overline{\ptdendo{T}}) & (\VertArr{\dcat{A}},\DCodClass{\dcat{A}}{\cong},\Id)
    \end{tikzcd}
  \]
  where the two projections are strict morphisms.
  Note that $\vcirc$ must preserve certain colimits to be a morphism of configurations; this follows from conservativity of $\VertArr{U}$.
  By \cref{pointed-free-monad}, the morphisms of configurations induce corresponding strong morphisms of free monads.
  The strong morphisms associated to the projections force the free monad on $\overline{\ptdendo{T}} \times_{\cat{E}} \VertArr{\dcat{A}}$ to be $\overline{\monad{M}} \times_{\cat{E}} \VertArr{\dcat{A}}$, which makes the strong morphism associated to $(\Gth,\vcirc)$ the desired $({\vcirc},\psi) \co (\VertArr{\dcat{A}} \times_{\mathsf{}\cat{E}} \VertArr{\dcat{A}},\overline{\monad{M}} \times_{\cat{E}} \VertArr{\dcat{A}}) \to (\VertArr{\dcat{A}},\overline{\monad{M}})$.
  The universal property of $(\overline{\monad{M}},\psi)$ described in the theorem statement follows straightforwardly from the universal property of $\overline{\monad{M}}$ as the free monad on $\overline{\ptdendo{T}}$.
\end{proof}

Although our target application of \cref{kelly-pointed-left-category} is to the algebraic small object argument, it also can be used to derive properties of free monads more generally.

\begin{example}
  \label{subobject-vertically-pointed-monad-example}
  Let $\cat{E}$ be an elementary topos and let $\ptdendo{T} = (T,\Gt)$ be a pointed endofunctor on $\cat{E}$ that preserves colimits of $\kappa$-chains for some limit ordinal $\kappa > 0$.
  If $\Gt$ is a cartesian natural transformation valued in monomorphisms, then the unit of the free monad on $\ptdendo{T}$ (which exists by \cref{pointed-free-monad}) is also cartesian and valued in monomorphisms.
\end{example}
\begin{proof}
  By \cref{kelly-pointed-left-category} with $\class{M} \defeq \cat{E}$ and the notion of composable structure $U \co \dcat{A} \to \Sq{\cat{E}}$ whose vertical morphisms are monomorphisms in $\cat{E}$ and whose squares are pullback squares in $\cat{E}$.
  This notion of composable structure is left-connected, so per \cref{vertical-point-unpack} an $\dcat{A}$-point over $\ptdendo{T}$ consists of a lift of $\tau$ through $\VertArr{U}$; the assumptions on $\tau$ say exactly that such a lift exists.
  The existence of a subobject classifier and pullback-stable colimits implies that $\DCodClass{\dcat{A}}{\cat{E}}$ is closed under all colimits.
  Thus the free monad on $\ptdendo{T}$ also admits an $\dcat{A}$-point, which implies in particular that its unit also lifts through $\VertArr{U}$.
\end{proof}

\subsection{The algebraic small object argument as a saturation}
\label{sec:soa:algebraic-saturation}

For our saturation principles, we start with a category $\cat{E}$ with an \textsc{awfs} cofibrantly generated by some $u \co \cat{J} \to \Arr{\cat{E}}$ and are given a notion of composable structure $U \co \dcat{A} \to \Sq{\cat{E}}$.
In order to apply \cref{kelly-pointed-left-category} with the pointed endofunctor $\SplPtdEndo[u]_{\cat{E}}$ on $\Arr{\cat{E}}$, we want to derive a notion of composable structure on the arrow category $\Arr{\cat{E}}$.
For this, we use a \emph{horizontal comma pseudo double category} construction \cite[\S2.5]{grandis-pare:04}.
Here we only need a comma of the form $\DComma{F}{\dcat{B}}$ where $F \co \dcat{A} \to \dcat{B}$.

\begin{definition}
  \label{gluing-double-category}
  Given a pseudo double functor $F \co \dcat{A} \to \dcat{B}$ between pseudo double categories, the \emph{horizontal comma pseudo double category} $\DComma{F}{\dcat{B}}$ is a pseudo double category where
  \begin{enumerate}
  \item an object is a triple $(A,B,f)$ consisting of  where $A \in \dcat{A}$, $B \in \dcat{B}$, and $f \co FA \to B$;
  \item a horizontal morphism $(a,b) \co (A,B,f) \to (A',B',f')$ is a pair of a horizontal morphism $a \co A \to A'$ and a horizontal morphism in $b \co B \to B'$ fitting in the diagram
    \[
      \begin{tikzcd}[row sep=tiny, column sep=small]
        A \ar{rrr}{a} & & &[-1em] A' \\
        & FA \ar{dr}[below left]{f} \ar{rrr}{Fa} & & & FA' \ar{dr}{f'} \\
        & & B \ar{rrr}[below]{b} & & & B' \rlap{,}
      \end{tikzcd}
    \]
    \ie, a morphism of the comma category $\Comma{F_0}{\dcat{B}_0}$;
  \item a vertical morphism $(\bm{a},\bm{b},\gamma) \co (A,B,f) \verto (A',B',f')$ is a triple of a vertical morphism $\bm{a} \co A \verto A'$, vertical morphism $\bm{b} \co B \verto B'$, and square $\gamma$ of $\dcat{B}$ fitting in the diagram
    \[
      \begin{tikzcd}[row sep=tiny, column sep=small]
        A \ar[verto]{ddd}[left]{\bm{a}} \\
        & FA \ar[phantom]{ddddr}{\gamma} \ar[verto]{ddd}[left]{F\bm{a}} \ar{dr}{f} \\
        & & B \ar[verto]{ddd}{\bm{b}}\\
        A' \\
        & FA' \ar{dr}[below left]{f'} \\
        & & B' \rlap{,}
      \end{tikzcd}
    \]
    \ie, an object of the comma category $\Comma{F_1}{\dcat{B}_1}$;
  \item a square is a morphism of the comma category $\Comma{F_1}{\dcat{B}_1}$, \ie, a pair of a square $\alpha$ in $\dcat{A}$ and square $\beta$ in $\dcat{B}$ fitting into the evident commutative diagram in $\dcat{B}_1$.
  \end{enumerate}
  We refer to \textcite[\S2.5]{grandis-pare:04} for the remaining details.
\end{definition}

\begin{definition}
  Let $U \co \dcat{A} \to \Sq{\cat{E}}$ be a notion of composable structure on $\cat{E}$.
  We write $\glueconc{U} \co \GlueDbl{U} \to \Sq{\Arr{\cat{E}}}$ for the \emph{glued} notion of composable structure with $\GlueDbl{U} \defeq \DComma{U}{\Sq{\cat{E}}}$ and $\glueconc{U}$ defined on vertical morphisms by $\glueconc{U}(\bm{a},b,\gamma) \defeq (a,b)$.
  We write $\gluedom \co \GlueDbl{U} \to \dcat{A}$ for the domain projection.
\end{definition}

\begin{proposition}
  \label{gluing-left-connected}
  Let $U \co \dcat{A} \to \Sq{\cat{E}}$ be a notion of composable structure. If $U$ is left-connected, then $\glueconc{U} \co \GlueDbl{U} \to \Sq{\Arr{\cat{E}}}$ is also left-connected.
\qed
\end{proposition}

\begin{lemma}
  \label{gap-cobase-to-opfibration}
  Let $U \co \dcat{A} \to \Sq{\cat{E}}$ be a left-connected notion of composable structure.
  Let $\class{M}$ be a wide subcategory of $\cat{E}$ such that $\DCodClass{\dcat{A}}{\class{M}}$ is closed under cobase change in $\VertArr{\dcat{A}}$ and these pushouts are preserved by $\VertArr{U}$.
  Then given $\bm{f} \co A \verto B$ in $\class{M}$ and $g \co A \to C$, we have a square
  \[
    \begin{tikzcd}
      A \ar[phantomcenter]{dr}{\beta} \ar[verto]{d}[left]{\bm{f}} \ar{r}{g} & C \ar[dashed,verto]{d} \\
      B \ar[dashed]{r} & D
    \end{tikzcd}
  \]
  which is an opcartesian lift of $\bm{f}$ along $g$ in $\vdom \co \VertArr{\dcat{A}} \to \cat{E}$ and is sent to a pushout square by $U$.
\end{lemma}
\begin{proof}
  Given $\bm{f} \co A \verto B$ in $\class{M}$ and $g \co A \to C$, the left connection $\idv_A \to \bm{f}$ belongs to $\DCodClass{\dcat{A}}{\class{M}}$, so we can form its pushout along $\idv_g \co \idv_A \to \idv_C$, yielding a cube of the form
  \[
    \begin{tikzcd}[column sep={4.5em,between origins},row sep=2.4em]
      A \ar[drrr, phantom, "\ulcorner"{pos=0.85,xslant=-1.8,xscale=2.5}] \ar[equals]{dr} \ar[verto]{dd}[left]{\idv_A} \ar{rr}{g} && C \ar[verto={pos=0.7}]{dd}[pos=0.7]{\idv_C} \ar[dashed,equals]{dr} & \\[-1.3em]
      & A \ar[crossing over,dashed]{rr} && C \ar[verto,dashed]{dd}{\bm{h}} \\
      A \ar[drrr, phantom, "\ulcorner"{pos=0.85,xslant=-1.8,xscale=2.5}] \ar{dr}[below left]{f} \ar{rr}[near end]{g} && C \ar[dashed]{dr} \\[-1.3em]
      & B \ar[from=uu,crossing over,verto={pos=0.4},"\bm{f}"{pos=0.4,left}] \ar[dashed]{rr} && D \rlap{.}
    \end{tikzcd}
  \]
  The front face is our opcartesian lift $\beta$.
  Its universal property follows straightforwardly from the universal property of the pushout: for any $t \co C \to C'$ and square $\gamma \co \bm{f} \to \bm{h'}$ under $tg$, we get a cocone
  \[
    \begin{tikzcd}[column sep=large]
      A \ar[verto]{d}[left]{\bm{f}} \ar[phantomcenter]{dr}{\gamma} \ar{r}[above]{tg} & |[yshift=-0.8em]| C'\ar[verto]{d}{\bm{h'}} & |[yshift=-0.4em]| \ar[phantomcenter]{dl}{\leftcnx*} \ar[equals]{l} C' \ar[verto]{d} & C \ar[phantomcenter]{dl}{\idv_t} \ar{l}[above]{t} \ar{d}{\idv_C} \\
      B \ar{r} & |[yshift=-0.8em]| D' & |[yshift=-0.4em]| \ar{l}[below]{h'} C' & \ar{l}[below]{t} C
    \end{tikzcd}
  \]
  under $\bm{f} \leftarrow \idv_A \to \idv_C$ which then induces a factorization of $\gamma$ through $\alpha$.
  That the square is a pushout in $\cat{E}$ follows from pushout pasting, as the back face of the cube is also a pushout in $\cat{E}$.
\end{proof}

We now prove our first main theorem, which exhibits the left-coalgebras of a cofibrantly generated \textsc{awfs} as the least cellular notion of composable structure containing the generators.
Although the statement of the following theorem makes clear that its output is unique with some properties, the useful data of that output is buried in the notion of $\GlueDbl{U}$-point.
We extract this data in \cref{soa-unit-vertical-point}.

\begin{theorem}
  \label{soa-unit-vertical-point-abstract}
  Fix a category $\cat{E}$ equipped with a $\kappa$-backdrop $\class{M}$, a diagram $u \co \cat{J} \to \Arr{\cat{E}}$ compatible with $\class{M}$ such that $\dom u$ is levelwise $(\kappa,\class{M})$-small, and a left-connected, $(\kappa,\class{M})$-cellular notion of composable structure $U \co \dcat{A} \to \Sq{\cat{E}}$.
  If $(\comonad{L},\monad{R})$ is the \textsc{awfs} cofibrantly generated by $u$, then any lift $\bm{D}_{\dcat{A}} \co \Arr{\cat{E}} \to \VertArr{\dcat{A}}$ of $\Den{u}$ through $\VertArr{U}$ induces an essentially unique $\GlueDbl{U}$-point $(\overline{\monad{R}},\theta)$ over $\monad{R}$ with a natural transformation $\Gg \co \bm{D}_{\dcat{A}} \to \gluedom \overline{R}\idv$ such that $\VertArr{U}\Gg = \oneStep{u}$.
\end{theorem}
\begin{proof}
  The \textsc{awfs} cofibrantly generated by $u$ exists by \cref{soa-backdrop}.
  Recall from the proof of \cref{soa-backdrop} that $\monad{R}$ is the free monad on $\SplPtdEndo[u]_{\cat{E}}$.
  We aim to apply \cref{kelly-pointed-left-category} with the configuration $(\Arr{\cat{E}},\DomClass{\cat{E}}{\class{M}},\SplPtdEndo[u]_{\cat{E}}) \in \ConfP{\kappa}$ established in the proof of \cref{soa-backdrop} to obtain a $\GlueDbl{U}$-point over $\monad{R}$.
  Observe that $\glueconc{U} \co \GlueDbl{U} \to \Sq{\Arr{\cat{E}}}$ is $(\kappa,\DomClass{\cat{E}}{\class{M}})$-cellular, as required, by the assumption that $U$ is $(\kappa,\class{M})$-cellular.

  It remains to construct a $\GlueDbl{U}$-point over $\SplPtdEndo[u]_{\cat{E}}$.
  Recall that $\splpt[u]$ is the cobase change of $\codpt\,\Den{u} \co \Den{u} \to \CodEndo_{\cat{E}} \Den{u}$ along the counit $\Ge \co \Den{u} \to \Id_{\Arr{\cat{E}}}$.
  By definition of $\GlueDbl{U}$, $\bm{D}_{\dcat{A}}$ induces a lift of $\codpt\,\Den{u}$ through $\glueconc{U}$:
  \[
    \begin{tikzcd}
      \Arr{\cat{E}} \ar[bend right]{ddr}[below left]{\bm{D}_{\dcat{A}}} \ar[bend left=25]{drr}[pos=0.7]{\codpt\,\Den{u}} \ar[dashed]{dr}[pos=.7]{\bm{D}_{\gluelabel}} \\
      &[-2em] \VertArr{\GlueDbl{U}} \pullback \ar{d} \ar{r} & \Arr{(\Arr{\cat{E}})} \ar{d}{\dom} \\
      & \VertArr{\dcat{A}} \ar{r}[below]{\VertArr{U}} & \Arr{\cat{E}} \rlap{.}
    \end{tikzcd}
  \]
  By \cref{gap-cobase-to-opfibration}, we have opcartesian lifts
  \begin{equation}
    \label{soa-unit-vertical-point:opcartesian-lift}
    \begin{tikzcd}
      \Den{u} \ar[phantomcenter]{dr}{\beta} \ar[verto]{d}[left]{\bm{D}_{\gluelabel}} \ar{r}{\Ge} & \Id_{\Arr{\cat{E}}} \ar[dashed,verto]{d}{\bm{\tau}} \\
      \CodEndo_{\cat{E}} \Den{u} \ar[dashed]{r} & \SplEndo[u]_{\cat{E}} \rlap{,}
    \end{tikzcd}
  \end{equation}
  living over pushout squares in $\Arr{\cat{E}}$, which define a lift $\bm{\tau} \co \Arr{\cat{E}} \to \VertArr{\GlueDbl{U}}$ of $\splpt[u]$ through $\VertArr{{\glueconc{U}}}$.
  Since $\GlueDbl{U}$ is left-connected, such a lift determines a $\GlueDbl{U}$-point $((\bm{\tau}_!,\leftcnx_!),\alpha)$ over $\SplPtdEndo[u]_{\cat{E}}$ as described in \cref{vertical-point-unpack}.

  \Cref{kelly-pointed-left-category} now gives a $\GlueDbl{U}$-point $(\overline{\monad{R}},\theta)$ over $\monad{R}$ and morphism $\overline{\gamma} \co ((\bm{\tau}_!,\leftcnx_!),\alpha) \to (\overline{\monad{R}},\theta)$ over the canonical map $\SplPtdEndo[u]_{\cat{E}} \to \UnderPtd{\monad{R}}$.
  We obtain $\gamma_f \co \bm{D}_{\dcat{A}}(f) \to \gluedom \overline{R}(\idv_f)$ for $f \co A \to B$ as the composite
  \[
    \begin{tikzcd}
      \bm{D}_{\dcat{A}}(f) = \gluedom \bm{D}_{\gluelabel}(f) \ar{r}{\gluedom \beta_{f}} &[2em] \gluedom \bm{\tau}_f \ar{r}{\cong} &\gluedom (\bm{\tau}_f \vcirc \idv_f) \ar{r}{\gluedom \overline{\Gg}_{\idv_f}} &[3em] \gluedom \overline{R}(\idv_{f})\rlap{.}
    \end{tikzcd}
  \]

  To see essential uniqueness, suppose we have a $\GlueDbl{U}$-point $(\overline{\monad{R}}',\theta')$ over $\monad{R}$ and a natural transformation $\Gg' \co \bm{D}_{\dcat{A}} \to \gluedom \overline{R}'\idv$ such that $\VertArr{U}\Gg' = \oneStep{u}$.
  The latter extends to some $\bm{D}_{\gluelabel} \to \overline{R}'\idv$ which by the opcartesian property of $\beta$ factors through a transformation $\bm{\tau} \to \overline{R}'\idv$.
  The latter transformation extends to a morphism of $\GlueDbl{U}$-points $((\bm{\tau}_!,\leftcnx_!),\alpha) \to (\overline{\monad{R}}',\theta')$ over $\SplPtdEndo[u]_{\cat{E}} \to \UnderPtd{\monad{R}}$.
  Finally, the universal property of $(\overline{\monad{R}},\theta)$ implies that there is a morphism $(\overline{\monad{R}},\theta)\to (\overline{\monad{R}}',\theta')$ over the identity on $\monad{R}$.
  Because $\VertArr{U}$ is conversative, it must be an isomorphism.
\end{proof}

As a corollary:

\begin{theorem}
  \label{soa-unit-vertical-point}
  
\end{theorem}
\begin{proof}
  By \cref{soa-unit-vertical-point-abstract}, we have a $\GlueDbl{U}$-point $(\overline{\monad{R}},\theta)$ over $\monad{R}$ and some $\Gg \co \bm{D}_{\dcat{A}} \to \gluedom \overline{R}\idv$ such that $\VertArr{U}\Gg = \oneStep{u}$.
  Recall that $L \co \Arr{\cat{E}} \to \Arr{\cat{E}}$ is the domain component of the unit of $\monad{R}$; we may thus define $\bm{L}_{\dcat{A}} \defeq \gluedom \overline{R}\idv$, in which case $\gamma$ is the desired transformation $\bm{D}_{\dcat{A}} \to \bm{L}_{\dcat{A}}$.
\end{proof}

\begin{proposition}
  \label{soa-unit-vertical-point-multiplication}
  In the situation of \cref{soa-unit-vertical-point}, we have some $\overline{\Gm} \co \bm{L}_{\dcat{A}}R \vcirc \bm{L}_{\dcat{A}} \to \bm{L}_{\dcat{A}}$ lifting $(\id,\Gm) \co LR \vcirc L \to L$.
\end{proposition}
\begin{proof}
  Again using the output of \cref{soa-unit-vertical-point-abstract}, we extract $\overline{\Gm}$ from the multiplication for $\overline{\monad{R}}$ like so:
  \[
    \begin{tikzcd}[column sep={2em}]
     \bm{L}_{\dcat{A}}Rf \vcirc \bm{L}_{\dcat{A}}f = \gluedom (\overline{R}(\idv_{Rf}) \vcirc \overline{R}(\idv_f)) \ar{r}[below]{\cong}{\theta} & \gluedom \overline{R}(\overline{R}(\idv_f)) \ar{r} & \gluedom \overline{R}(\idv_f) = \bm{L}_{\dcat{A}}f \rlap{.}
    \end{tikzcd} \qedhere
  \]
\end{proof}

In the remainder of this section, we consider the problem of extending structure from the \emph{free} left maps to \emph{all} left maps, which is to say all $\UnderPtd{\comonad{L}}$-coalgebras.
The traditional story is that any left map of a \textsc{wfs} is a (codomain) retract of the left factor of its factorization, so any cellular class of maps closed under retracts which contains the generators of a cofibrantly generated \textsc{wfs} will contain every left map.
The same logic applies here: if we are in the situation of \cref{soa-unit-vertical-point} and know that every retract of an $\dcat{A}$-structured map admits an $\dcat{A}$-structure, then we can assign an $\dcat{A}$-structure to every $\UnderPtd{\comonad{L}}$-coalgebra.
In the algebraic situation, however, it is natural to investigate when we can get a functorial or pseudo double functorial assignment of $\dcat{A}$-structures to $\UnderPtd{\comonad{L}}$-coalgebras.

\begin{definition}
  \label{retract-category}
  The \emph{walking retract} is the category generated by the graph
  \[
    \begin{tikzcd}
      \mathsf{0} \ar{r}{\mathsf{s}} & \mathsf{1} \ar{r}{\mathsf{r}} & \mathsf{0}
    \end{tikzcd}
  \]
  and equation $\mathsf{r}\mathsf{s} = \id$.
  Given a category $\cat{E}$, its \emph{category of retracts} $\Retract(\cat{E})$ is the category of functors from the walking retract to $\cat{E}$.
  Write $\projret,\projobj \co \Retract(\cat{E}) \to \cat{E}$ for evaluation at $\mathsf{0}$ and $\mathsf{1}$ respectively and $\Delta \co \cat{E} \to \Retract(\cat{E})$ for the functor sending $A \in \cat{E}$ to the identity retract $A \overset{=}\to A \overset{=}\to A$.
\end{definition}

\begin{definition}
  \label{codomain-retract}
  Given a category $\cat{E}$, its \emph{category of codomain retracts} $\CodRet(\cat{E})$ is the pullback
  \[
    \begin{tikzcd}
      \CodRet(\cat{E}) \pullback \ar[dashed]{d} \ar[dashed]{r} & \Retract(\Arr{\cat{E}}) \ar{d}{\Retract(\dom)} \\
      \cat{E} \ar{r}[below]{\Delta} & \Retract(\cat{E}) \rlap{,}
    \end{tikzcd}
  \]
  \ie, the subcategory of $\Retract(\Arr{\cat{E}})$ consisting of retract diagrams $g \overset{\Ga}{\to} f \overset{\Gb}{\to} g$ in $\Arr{\cat{E}}$ such that $\dom \Ga = \vdom \Gb = \id$.
\end{definition}

\begin{definition}
  \label{lifts-codomain-retracts}
  For a notion of composable structure $U \co \dcat{A} \to \Sq{\cat{E}}$, a \emph{codomain retract lifting operator} is a functor $\VertArr{\dcat{A}} \times_{\Arr{\cat{E}}} \CodRet(\cat{E}) \to \VertArr{\dcat{A}}$, where the domain is the pullback of $\VertArr{U} \co \VertArr{\dcat{A}} \to \Arr{\cat{E}}$ and $\projobj \co \CodRet(\cat{E}) \to \Arr{\cat{E}}$, fitting into the diagram
  \[
    \begin{tikzcd}
      \VertArr{\dcat{A}} \times_{\Arr{\cat{E}}} \CodRet(\cat{E}) \ar{d}[left]{\projr} \ar[dashed]{r} & \VertArr{\dcat{A}} \ar{d}{U} \\
      \CodRet(\cat{E}) \ar{r}[below]{\projret} & \Arr{\cat{E}} \rlap{.}
    \end{tikzcd}
  \]
\end{definition}

\begin{example}
  \label{coalgebra-retract-lifting}
  Given an \textsc{awfs} $(\comonad{L},\monad{R})$ on a category $\cat{E}$, $U_{\UnderPtd{\comonad{L}}} \co \Coalg{\UnderPtd{\comonad{L}}} \to \Sq{\cat{E}}$ admits a codomain retract lifting operator.
\end{example}
\begin{proof}
  Recall that a vertical morphism over $f \in \Arr{\cat{E}}$ in $\Coalg{\UnderPtd{\comonad{L}}}$ is a section $\sigma \co f \to Lf$ of $\Phi_f$.
  Given a retract $g \overset{\alpha}\to f \overset{\beta}\to g$ in $\Arr{\cat{E}}$, it follows from naturality of $\Phi$ that $L\beta \circ \sigma \circ \alpha \co g \to Lg$ is a section of $\Phi_g$, and this assignment is evidently functorial.
\end{proof}

\begin{remark}
  Note that a codomain retract lifting operator only lifts the retract \emph{object}, not entire retract diagrams.
  In \cref{coalgebra-retract-lifting}, for instance, we cannot expect that $(\alpha,\beta)$ exhibits $L\beta \circ \sigma \circ \alpha \co g \to Lg$ as a retract of $\sigma \co f \to Lf$ in $\Coalg{\UnderPtd{\comonad{L}}}$.
\end{remark}

\begin{theorem}
  \label{soa-copointed-coalgebras-functor}
  
\end{theorem}
\begin{proof}
  Under the assumptions above, we can apply \cref{soa-unit-vertical-point} to obtain lifts $\bm{L}_{\dcat{A}} \co \Arr{\cat{E}} \to \VertArr{\dcat{A}}$ of $L \co \Arr{\cat{E}} \to \Arr{\cat{E}}$ through $\VertArr{U}$ such that $\oneStep{u} \co \Den{u} \to L$ lifts to a transformation $\bm{D}_{\dcat{A}} \to \bm{L}_{\dcat{A}}$.
  For any object $\bm{f} = (f,s) \in \Coalg{\UnderPtd{\comonad{L}}}$, we have by definition codomain retract diagram
  \begin{equation}
    \label{canonical-coalgebra-retract}
    \begin{tikzcd}[sep=large]
      f \ar{r}{(\id,s)} & Lf \ar{r}{\Phi_f} & f
    \end{tikzcd}
  \end{equation}
  By assumption, we can lift $\bm{L}_{\dcat{A}}f \in \dcat{A}$ along this retract to obtain $j\bm{f} \in \VertArr{\dcat{A}}$  with $\VertArr{U}(j\bm{f}) = f$.
  Similarly, any morphism $(h,k) \co \bm{f} \to \bm{g}$ in $\Coalg{\UnderPtd{\comonad{L}}}$ induces by definition a morphism of retract diagrams as above and a morphism $\bm{L}_{\dcat{A}}(h,k) \co \bm{L}_{\dcat{A}}f \to \bm{L}_{\dcat{A}}g$, whence a choice of lift $j(h,k) \co j\bm{f} \to j\bm{g}$ with $U(j(h,k)) = (h,k)$.
\end{proof}

To get a (pseudo) \emph{double} functor from $\DCoalg{\UnderPtd{\comonad{L}}}$, we require an additional compositionality condition on the retract lifting operator.
For the sake of simplicity, we now only consider the case where $\dcat{A}$ is thin.

\begin{definition}
  \label{compositional-retract-lifting}
  A codomain retract lifting operator on a thin notion of composable structure $U \co \dcat{A} \to \Sq{\cat{E}}$ is \emph{compositional} if for all diagrams
  \begin{equation}
    \label{compositional-retract-lifting-input}
    \begin{tikzcd}
      & A \ar{dl}[above left]{f'} \ar[verto]{d}{\bm{f}} \ar{dr}{f'} \\
      B' \ar[bend right=20,equals]{rr} \ar{r}{s_B} & B \ar{r}{r_B} & B'
    \end{tikzcd}
    \qquad
    \begin{tikzcd}
      B' \ar[verto]{d}{\bm{g'}} \ar{r}{s_B} & B \ar[verto]{d}{\bm{g}} \ar{r}{r_B} & B' \ar[verto]{d}[left]{\bm{g'}} \\
      C' \ar{r}{u} & C \ar{r}{v} & C'
    \end{tikzcd}
    \qquad
    \begin{tikzcd}
      & B' \ar{dl}[above left]{g''} \ar[verto]{d}{\bm{g'}} \ar{dr}{g''} \\
      C'' \ar[bend right=20,equals]{rr} \ar{r}{s_C} & C' \ar{r}{r_C} & C''
    \end{tikzcd}
  \end{equation}
  such that $r_Cvu = r_C$, the chosen lift of $\bm{g} \vcirc \bm{f}$ along the composite codomain retract $g''f' \to gf \to g''f'$ is isomorphic over $g''f'$ to the vertical composite of the chosen lift of $\bm{g'}$ along $g'' \to g' \to g''$ with the chosen lift of $\bm{f}$ along $f' \to f \to f'$.
\end{definition}

\begin{example}
  \label{coalgebra-retract-lifting-compositional}
  Given an \textsc{awfs} $(\comonad{L},\monad{R})$ on a category $\cat{E}$, the codomain retract lifting operator on $U_{\UnderPtd{\comonad{L}}} \co \Coalg{\UnderPtd{\comonad{L}}} \to \Sq{\cat{E}}$ described in \cref{coalgebra-retract-lifting} is compositional.
\end{example}
\begin{proof}
  Suppose we have diagrams as in \eqref{compositional-retract-lifting-input}. Write $\bm{f} = (f,s) \co A \verto B$,  $\bm{g} = (g,t) \co B \verto C$, and $\bm{g'} = (g',t') \co B' \verto C'$ for the coalgebras.
  Because $(s_B,u) \co \bm{g'} \to \bm{g}$ is a morphism of coalgebras, we have in particular $t \circ u = E(s_B,u) \circ t'$.

  Recall that the coalgebra structure on $(g,t) \vcirc (f,s)$ is given by the composite
  \begin{equation}
    \label{compositional-retract-composite}
    \begin{tikzcd}
      C \ar{r}[below]{t} & Eg \ar{r}[below]{E(s,\id_C)} &[2em] E(g \circ Rf) \ar{r}[below]{E(E(\id_A,g),\id_C)} &[5em] ER(gf) \ar{r}[below]{\mu_{gf}} & E(gf) \rlap{.}
    \end{tikzcd}
  \end{equation}
  Lifting \eqref{compositional-retract-composite} along the retract $g''f' \to gf \to g''f'$ consists in pre-composing with $s_Cu \co C'' \to C$ and post-composing with $E(\id_A,r_Cv) \co E(gf) \to E(g''f')$.
  Writing $s' = E(\id_A,r_B) \circ s \circ s_B$ for the lifted coalgebra structure on $f'$, and $t'' = E(\id_B,r_C) \circ t' \circ s_C$ for the lifted structure on $g''$, we see from the diagram
  \[
    \begin{tikzcd}[row sep=large]
      C'' \ar[bend right=20]{dddr}[below left]{t''} \ar{r}{s_C} & C' \ar{d}{t'} \ar{r}{u} &[2em] C \ar{d}{t} \\
      & Eg' \ar{dd}{E(\id_{B'},r_C)} \ar{r}{E(s_B,u)} & Eg \ar{d}{E(s,\id_C)} \\
      & & E(g \circ Rf) \ar{d}{E(E(\id_A,r_B),r_Cv)} \ar{r}{E(E(\id_A,g),\id_C)} &[6em] ER(gf) \ar{r}{\mu_{gf}} \ar{d}{ER(\id_A,r_Cv)} & E(gf) \ar{d}{E(\id_A,r_Cv)} \\
      & Eg'' \ar{r}[below]{E(s',\id_C)} & E(g'' \circ Rf') \ar{r}[below]{E(E(\id_A,g''),\id_{C'})} & ER(g''f') \ar{r}[below]{\mu_{g''f'}} & E(g''f')
    \end{tikzcd}
  \]
  that the necessary coherence holds.
\end{proof}

\begin{theorem}
  \label{soa-copointed-coalgebras-double-functor}
  Fix a category $\cat{E}$ equipped with a $\kappa$-backdrop $\class{M}$, a diagram $u \co \cat{J} \to \Arr{\cat{E}}$ compatible with $\class{M}$ such that $\dom u$ is levelwise $(\kappa,\class{M})$-small, and a thin, left-connected, $(\kappa,\class{M})$-cellular notion of composable structure $U \co \dcat{A} \to \Sq{\cat{E}}$ with a compositional codomain retract lifting operator.
  Write $(\comonad{L},\monad{R})$ for the \textsc{awfs} cofibrantly generated by $u$, which exists by \cref{soa-backdrop}.
  If $(\comonad{L},\monad{R})$ is the \textsc{awfs} cofibrantly generated by $u$, then any lift $\bm{D}_{\dcat{A}} \co \Arr{\cat{E}} \to \VertArr{\dcat{A}}$ of $\Den{u} \co \Arr{\cat{E}} \to \Arr{\cat{E}}$ through $\VertArr{U}$ induces a pseudo double functor $j \co \DCoalg{\UnderPtd{\comonad{L}}} \to \dcat{A}$ fitting in the diagram
  \[
    \begin{tikzcd}[sep=small]
      \DCoalg{\UnderPtd{\comonad{L}}} \ar[dashed]{rr}{j} \ar{dr}[below left]{U_{\UnderPtd{\comonad{L}}}} && \dcat{A} \ar{dl}{U} \rlap{.} \\
       & \Sq{\cat{E}}
    \end{tikzcd}
  \]
\end{theorem}
\begin{proof}
  Under the assumptions above, we can apply \cref{soa-unit-vertical-point} to obtain lifts $\bm{L}_{\dcat{A}} \co \Arr{\cat{E}} \to \VertArr{\dcat{A}}$ of $L \co \Arr{\cat{E}} \to \Arr{\cat{E}}$ through $\VertArr{U}$.
  Following the proof of \cref{soa-copointed-coalgebras-functor}, we can define $\VertArr{j} \co \Coalg{\UnderPtd{\comonad{L}}} \to \VertArr{\dcat{A}}$ such that $\VertArr{U}\VertArr{j} = \VertArr{U_{\UnderPtd{\comonad{L}}}}$.
  We show that $\VertArr{j}$ extends to a pseudo double functor $j \co \DCoalg{\UnderPtd{\comonad{L}}} \to \dcat{A}$.

  Observe first that $j$ preserves identities up to $\leftcnx_{j(\idv_A)} \co \idv_A \to j(\idv_A)$, which is an isomorphism by conservativity of $\VertArr{U}$.
  It remains to check that $j$ preserves vertical composition up to isomorphism.
  Suppose we have vertically composable coalgebras $\bm{f} = (f,s) \co A \verto B$ and $\bm{g} = (g,t) \co B \verto C$.
  We have a diagram
  \begin{equation}
    \label{soa-double-functor-first-pyramid}
    \begin{tikzcd}[column sep=huge, row sep=large]
      &
      & A \ar{dl}[above left]{f} \ar{d}[description]{Lf} \ar{dr}{f} \\
      & \ar{dl}[above left]{g} B \ar{d}[description]{Lg} \ar{r}[description]{s}
      & Ef \ar{d}[description]{L(g \circ Rf)} \ar{r}[description]{Rf}
      & B \ar{d}[description]{Lg} \ar{dr}{g} \\
      C \ar{r}[below]{t}
      & Eg \ar{r}[below]{E(s,\id_C)}
      & E(g \circ Rf) \ar{r}[below]{E(Rf,\id_C)}
      & Eg \ar{r}[below]{Rg}
      & C
    \end{tikzcd}
  \end{equation}
  where the upper vertical map lifts to $\bm{L}_{\dcat{A}}f \in \VertArr{\dcat{A}}$ and the inner rectangle lifts to a retract diagram
  \[
    \begin{tikzcd}[cramped,column sep=5em]
      \bm{L}_{\dcat{A}}g \ar{r}{\bm{L}_{\dcat{A}}(s,\id_C)} & \bm{L}_{\dcat{A}}(g \circ Rf) \ar{r}{\bm{L}_{\dcat{A}}(Rf,\id_C)} & \bm{L}_{\dcat{A}}g
    \end{tikzcd}
  \]
  in $\VertArr{\dcat{A}}$.
  It follows by compositionality of the codomain retract lifting operator that $j\bm{g} \vcirc j\bm{f}$ is isomorphic to the lift of $\bm{L}_{\dcat{A}}(g \circ Rf) \vcirc \bm{L}_{\dcat{A}}f$ along the codomain retract $gf \to L(g\circ Rf) \circ Lf \to gf$ displayed above.
  Now, recalling the transformation $\overline{\Gm}$ provided by \cref{soa-unit-vertical-point-multiplication}, we consider the composite $\bm{L}_{\dcat{A}}(g \circ Rf) \vcirc \bm{L}_{\dcat{A}}f \to \bm{L}_{\dcat{A}}(gf) \to \bm{L}_{\dcat{A}}(g \circ Rf) \vcirc \bm{L}_{\dcat{A}}f$ given by the diagram
  \[
    \begin{tikzcd}[sep=huge]
      A \ar[phantomcenter]{dr}{\bm{L}_{\dcat{A}}(\id_A,g)} \ar[verto]{d}[left]{\bm{L}_{\dcat{A}}f} \ar[equals]{r}
      &[1.5em] A \ar[phantomcenter]{ddr}{\overline{\Gm}_{gf}} \ar[verto]{d}{\bm{L}_{\dcat{A}}(gf)} \ar[equals]{r}
      &[-2.7em] A \ar[phantomcenter]{ddr}{\lambda} \ar[verto]{dd}[description,pos=.35]{\bm{L}_{\dcat{A}}(gf)} \ar[equals]{r}
      &[-3em] A \ar[phantomcenter]{dr}{\leftcnx} \ar[verto]{d}[left]{\idv} \ar[equals]{r} & A \ar[verto]{d}{\bm{L}_{\dcat{A}}f} \\
      Ef \ar[phantomcenter]{dr}[xshift=0.2em]{\bm{L}_{\dcat{A}}(E(\id_A,g),\id_C)} \ar[verto]{d}[left]{\bm{L}_{\dcat{A}}(g \circ Rf)} \ar{r}[description]{E(\id_A,g)}
      & E(gf) \ar[verto]{d}{\bm{L}_{\dcat{A}}R(gf)}
      & 
      & A \ar[verto]{d}[left]{\bm{L}_{\dcat{A}}(gf)} \ar[phantomcenter]{dr}{\bm{L}_{\dcat{A}}(Lf,\id_C)} \ar{r}[description]{Lf} & Ef \ar[verto]{d}{\bm{L}_{\dcat{A}}(g \circ Rf)} \\
      E(g \circ Rf) \ar{r}[below]{E(E(\id_A,g),\id_C)}
      & ER(gf) \ar{r}[below]{\Gm_{gf}}
      & E(gf) \ar[equals]{r}
      & E(gf) \ar{r}[below]{E(Lf,\id_C)} & E(g \circ Rf) \rlap{.}
    \end{tikzcd}
  \]
  Although this is not itself a retract diagram, post-composing the bottom horizontal row with $Rg \circ E(Rf,\id_C) \co E(g \circ Rf) \to C$ yields
  \begin{align*}
    Rg \circ E(f,\id_C) \circ \mu_{gf} \circ E(E(\id_A,g),\id_C)
    &= R(gf) \circ \mu_{gf} \circ E(E(\id_A,g),\id_C) \\
    &= RR(gf) \circ E(E(\id_A,g),\id_C) \\
    &= R(g \circ Rf) \\
    &= Rg \circ E(Rf,\id_C).
  \end{align*}
  Thus a second application of compositionality (with a trivial top retract) implies that the lift of $\bm{L}_{\dcat{A}}(g \circ Rf) \vcirc \bm{L}_{\dcat{A}}f$ along \eqref{soa-double-functor-first-pyramid} is isomorphic to the lift of $\bm{L}_{\dcat{A}}(gf)$ along the composite retract $gf \to L(g\circ Rf) \circ Lf \to L(gf) \to L(g\circ Rf) \circ Lf \to gf$, which is by definition $j(\bm{g} \vcirc \bm{f})$.
\end{proof}

\section{Applications}
\label{sec:applications}

\subsection{Uniform box-filling fibrations}
\label{sec:uniform-fibrations}

To apply the theorems of \cref{sec:soa:algebraic-saturation} effectively, we must be able to compute the density comonads of generating diagrams.
We illustrate how this proceeds using the example of \textsc{awfs}'s for \emph{uniform fibrations}, which appear in semantics of homotopy type theory \cite{gambino-sattler:17}.
For simplicity we consider only ``biased'' uniform fibrations, in \citeauthor{awodey:23}'s terminology \cite[\S3]{awodey:23}, and not unbiased or equivariant variations which are better-behaved in some settings \cite{cavallo-mortberg-swan:20,awodey:23,accrs:24}.

\begin{definition}
  \label{uniform-fibration-configuration}
  A \emph{uniform fibration configuration} $(I,t)$ in a presheaf category $\cat{E} = \PSh{\cat{C}}$ consists of
  \begin{enumerate}
  \item a \emph{cofibration classifier}: a monomorphism $t \co \CofT \mono \Cof$;
  \item a \emph{functorial cylinder}: a functor $I \co \cat{C} \to \cat{C}$ with monomorphism-valued natural transformations $\delta^0,\delta^1 \colon \Id_{\cat{C}} \to I$ and a Yoneda extension $\lan{I} \co \cat{E} \to \cat{E}$ such that
    \begin{enumerate}[label=(\roman*)]
    \item $\lan{I}$ preserves pullbacks;
    \item the induced natural transformations $\delta^0,\delta^1 \co \Id_{\cat{E}} \to \lan{I}$ are cartesian.
    \end{enumerate}
  \end{enumerate}
  We write $\subst{I}$ for the right adjoint to $\lan{I}$ and ${\partial_i} \co \subst{I} \to \Id_{\cat{E}}$ for the conjugate of $\delta^i \co \Id_{\cat{E}} \to \lan{I}$.
\end{definition}

\begin{definition}
  \label{decode-characteristic}
  Given $f \colon Y \to X$ in $\cat{E} = \PSh{\cat{C}}$, write $\phi^f \co \Slice{\cat{E}}{X} \to \Arr{\cat{E}}$ for the functor that sends an object $a \co B \to X$ to the base change
  \[
    \begin{tikzcd}
      A \pullback \ar[dashed,mono]{d}[left]{\phi^f(a)} \ar[dashed]{r} & Y \ar{d}{f} \\
      B \ar[below]{r}[below]{a} & X \rlap{.}
    \end{tikzcd}
  \]
  Write $u^f \co \catEl_{\cat{C}}{X} \to \Arr{\cat{E}}$ for the composite
  \begin{tikzcd}[cramped]
    \catEl_{\cat{C}} X \ar{r}{\yo}  & \Slice{\cat{E}}{X} \ar{r}{\phi^f} & \Arr{\cat{E}}
  \end{tikzcd}.
\end{definition}

\begin{definition}
  \label{uniform-fibration-generators}
  Fix $\cat{E} = \PSh{\cat{C}}$ with a uniform fibration configuration $(t,I)$ as in \cref{uniform-fibration-configuration}.
  Write $\boxDiagram{t}{I} \defeq \copair{\leibpushapp{{\delta^0}}u^t}{\leibpushapp{{\delta^1}}u^t}$ for the diagram
  \[
    \begin{tikzcd}[row sep=large]
      \textstyle\catEl_{\cat{C}} \Cof \ar{dr}[below left]{\leibpushapp{{\delta^0}}u^t} \ar{r}{\inl} & \textstyle(\catEl_{\cat{C}} \Cof) \sqcup (\catEl_{\cat{C}} \Cof) \ar[dashed]{d}[pos=0.4]{\boxDiagram{t}{I}} & \textstyle\catEl_{\cat{C}} \Cof \ar{l}[above]{\inr} \ar{dl}{\leibpushapp{{\delta^1}}u^t} \rlap{.} \\
      & \Arr{\cat{E}}
    \end{tikzcd}
  \]
  The \emph{uniform fibration \textsc{awfs}} on $(t,I)$, when it exists, is the \textsc{awfs} cofibrantly generated by $\boxDiagram{t}{I}$.
  We call the left maps of the \textsc{awfs} \emph{trivial cofibrations}.
  We write $\TCof{t}{I}$ and $\DTCof{t}{I}$ for the category and double category of copointed endofunctor coalgebras respectively.
\end{definition}

For a monomorphism $m \colon A \mono B$, we think of $\leibpushapp{{\delta^i}}(m)$ as an ``generalized open box'': its codomain is the $B$-shaped cylinder $I_!B$, and its domain, the pushout $B \sqcup_{A} I_!A$, comprises one end of the cylinder (determined by $i \in \braces{0,1}$) together with the $A$-shaped sub-part of its interior and other end.
Such open boxes have long been used for example in simplicial \cite[\S IV.2]{gabriel-zisman:67} and cubical \cite[\S8.4]{cisinski:06} homotopy theory.

Classically, the uniform fibration \textsc{awfs} exists for any choice of configuration, as presheaf categories are locally presentable.
Constructively, even without quotients in $\Set$, we can obtain it via \cref{soa-backdrop} under additional finiteness and decidability assumptions on $(t,I)$.

\begin{definition}
  A map $f \co Y \to X$ in $\cat{E} = \PSh{\cat{C}}$ is \emph{locally $(\kappa,\class{M})$-small} if for every $a \in \catEl_{\cat{C}} X$ the domain of $u^f(a)$ is $(\kappa,\class{M})$-small.
  We say $f$ is \emph{locally finite} if for every $a \in \catEl_{\cat{C}} X$, the domain of $u^f(a)$ is a finite colimit of representables.
\end{definition}

A locally finite map is locally $(\kappa,\class{M})$-small for all $\kappa$ and $\class{M}$, since representables are $(\kappa,\class{M})$-small and the $(\kappa,\class{M})$-small objects are closed under finite colimits.

\begin{definition}
  \label{finitary-uniform-fibration-configuration}
  A uniform fibration configuration $(t,I)$ is \emph{finitary} when
  \begin{enumerate}
  \item $t \co \CofT \mono \Cof$ is levelwise complemented and locally finite.
  \item $I_!$ preserves levelwise complemented morphisms and $\delta^0, \delta^1$ are valued in levelwise complemented monomorphisms.
  \end{enumerate}
\end{definition}

\begin{example}
  \label{cubical-sets-example}
  Consider presheaves on the category $\cat{C} = \FinSet_+$ of inhabited finite sets, which are called \emph{symmetric simplicial sets} (see, \eg, \textcite{grandis:01}).
  The category $\cat{C}$ is the idempotent completion of the full subcategory $\square_{\mathrm{B}} \hookrightarrow \cat{C}$ of objects of the form $\braces{0,1}^n$ for $n \ge 0$ \cite[Proposition 8.11]{campion:23}, which is known as the \emph{Boolean cube category} \cite{buchholtz-morehouse:17}; thus $\PSh{\cat{C}} \simeq \PSh{\square_{\mathrm{B}}}$ can also be called the category of \emph{Boolean cubical sets}.

  The functor $(-) \times \braces{0,1} \colon \cat{C} \to \cat{C}$ defines a functorial cylinder whose left Kan extension $(-) \times \ival \colon \PSh{\cat{C}} \to \PSh{\cat{C}}$ is product with the representable $\ival = \yo{\braces{0,1}} \in \PSh{\cat{C}}$.
  For a cofibration classifier, we can take the \emph{levelwise complemented subobject classifier} $\top_{\mathrm{lc}} \co 1 \mono \Omega_{\mathrm{lc}}$, with $(\Omega_{\mathrm{lc}})_I$ defined to be the set of sieves on $I \in \cat{C}$ with decidable membership and $\top_{\mathrm{lc}}$ to select the total sieve.
  In a classical metatheory, $\Omega$ is simply the subobject classifier.
  Every morphism in $\cat{C}$ factors as a split epimorphism followed by a monomorphism, so every decidable sieve on $I \in \cat{C}$ is generated by a (necessarily) finite collection of monomorphisms, meaning that every levelwise complemented subobject is a finite colimit of representables.\footnote{This is related to the fact that $\cat{C}$ is an Eilenberg--Zilber category \cite[Theorem 8.12]{campion:23}.}
  Thus $1 \mono \Omega_{\mathrm{lc}}$ is locally finite, and we have a finitary uniform fibration configuration $(\top_{\mathrm{lc}},(-) \times \braces{0,1})$.
  
  The uniform fibration \textsc{wfs} on this configuration is the (trivial cofibration, fibration) \textsc{wfs} of a Quillen model structure whose cofibrations are those maps classified by $1 \mono \Omega_{\mathrm{lc}}$ \cite{gambino-sattler:17,sattler:17}, and $\PSh{\cat{C}}$ admits a constructive interpretation of homotopy type theory in which types are the uniform fibrations \cite{cohen-coquand-huber-mortberg:15,orton-pitts:18,licata-orton-pitts-spitters:18}.
\end{example}

This example is one of many where $\PSh{\cat{C}}$ is a category of \emph{cubical sets}, that is, of presheaves over a monoidal category whose objects are powers of an interval object (see, \eg, \textcite{grandis-mauri:03} or \textcite{buchholtz-morehouse:17}).
  
\begin{example}
  \label{de-morgan-example}
  \Textcite{cohen-coquand-huber-mortberg:15} give a model of homotopy type theory in \emph{De Morgan cubical sets}, presheaves over the Lawvere theory $\square_{\mathrm{DM}}$ of De Morgan algebras.
  Product with the generating object of the Lawvere theory defines a functorial cylinder; as in boolean cubical sets, its left Kan extension is product $(-) \times \ival$ with an interval object $\ival \in \PSh{\square_{\mathrm{DM}}}$.
  \citeauthor{cohen-coquand-huber-mortberg:15}~interpret types as the uniform fibrations for a cofibration classifier they call the \emph{face lattice} \cite[\S4.1]{cohen-coquand-huber-mortberg:15}, which is the minimal subclassifier $\Cof \subseteq \Omega_{\mathrm{lc}}$ classifying the face inclusions $\ival^k \times \delta^i \times \ival^{n-k} \co \ival^n \mono \ival^{n+1}$ and closed under finite intersection and union.
  The face lattice excludes for example the diagonal subobject $\Delta_{\ival} \co \ival \mono \ival \times \ival$.
  It is locally finite essentially by construction.

  This example is notable in our context because it is not generated by any set, even in a classical metatheory.
  Suppose for sake of contradiction that we had a generating set $S \subseteq \Arr{\PSh{\square_{\mathrm{DM}}}}$ of trivial cofibrations, so that every trivial cofibration is a cell complex of maps in $S$.
  For any cardinal $\lambda$, we can attach $\lambda$ copies of the 2-cube along their diagonals to obtain an object
  \[
    \begin{tikzcd}
      \coprod_\lambda \ival \ar[mono]{d}[left]{\coprod_\lambda \Delta_{\ival}} \ar{r}{\nabla} & \ival \ar[mono,dashed]{d}{m} \\
      \coprod_\lambda (\ival \times \ival) \ar[dashed]{r}[below]{q} & \pushout W_\lambda \rlap{.}
    \end{tikzcd}
  \]
  The composite
  \begin{tikzcd}[cramped]
    1 \ar[mono]{r}{\delta_0} & \ival \ar[mono]{r}{m} & W_\lambda
  \end{tikzcd}
  is classified by the face lattice and indeed a trivial cofibration, although $m$ is not.
  By our working assumption, $m\delta_0$ can be written as a cell complex of maps in $S$, in particular as a transfinite composite of maps $i_\alpha \co A_\alpha \mono A_{\alpha+1}$.
  Any stage of the transfinite composite that contains the interval $m \co \ival \mono W_\lambda$ must contain \emph{all} of $W_\lambda$, since we cannot glue on a square along its diagonal.
  Since $\ival$ is tiny (that is, $\Hom{\PSh{\square_{\mathrm{DM}}}}{\ival}{-}$ preserves all colimits), the least $\alpha$ such that $A_\alpha$ contains $\ival$ must be a successor ordinal $\alpha = \beta + 1$, and so we have a pushout square
  \[
    \begin{tikzcd}[column sep=large]
      \coprod_{i \in I} C_i \ar[mono]{d}[left]{\coprod_{i \in I} f_i} \ar{r}{[c_i]_{i \in I}} & A_\beta \ar[mono]{d} \\
      \coprod_{i \in I} D_i \ar[below]{r}[below]{[d_i]_{i \in I}} & W_\lambda
    \end{tikzcd}
  \]
  where the maps $f_i$ belong to $S$.
  There must be exactly one $i \in I$ such that the image of $d_i$ contains the image of $q \co \coprod_\lambda (\ival \times \ival) \to W_\lambda$, in which case we have a lower bound $\card{\Hom{\PSh{\square_{\mathrm{DM}}}}{\ival^2}{D_i}} \ge \lambda$ on the number of 2-cubes in $D_i$.
  But while $\lambda$ was arbitrary, the set $\set{\card{\Hom{\PSh{\square_{\mathrm{DM}}}}{\ival^2}{D}}}{(f \co C \to D) \in S}$ is bounded by some fixed cardinal; thus we have a contradiction.
\end{example}

In \cref{uniform-fibrations} below we shall construct the uniform fibration \textsc{awfs} on any finitary configuration.
We first need to establish the existence of the density comonad for $\boxDiagram{t}{I}$.

\begin{notation}
  \label{ut-density-adjoints}
  The functor $\phi^t$ of \cref{decode-characteristic} decomposes as a sequence of left adjoints:
  \[
    \begin{tikzcd}
      \Slice{\cat{E}}{\Cof} \ar{r}{\Slice{\id_{(-)}}{\Cof}} &[2em] \Slice{\Arr{\cat{E}}}{\id_\Cof} \ar{r}{\subst{(t,\id_\Cof)}} &[2em] \Slice{\Arr{\cat{E}}}{t} \ar{r}{\sum_t} & \Arr{\cat{E}} \rlap{.}
    \end{tikzcd}
  \]
  We write $\nu^t \co \Arr{\cat{E}} \to \Slice{\cat{E}}{\Cof}$ for its right adjoint, which is thus given by the composite
  \[
    \begin{tikzcd}
      \Arr{\cat{E}} \ar{r}{(-) \times t} &[2em] \Slice{\Arr{\cat{E}}}{t} \ar{r}{\ran{(t,\id_\Cof)}} &[2em] \Slice{\Arr{\cat{E}}}{\id_\Cof} \ar{r}{\Slice{\dom}{\id_\Cof}} &[2em] \Slice{\cat{E}}{\Cof} \rlap{.}
    \end{tikzcd}
  \]
\end{notation}

\begin{lemma}
  \label{lan-adjoint}
  Let $\adjunction{F}{\cat{C}}{\cat{D}}{G}$ and let $u \co \cat{J} \to \cat{C}$ be a diagram whose density comonad $\Den{u}$ is defined.
  Then $F\Den{u}G$ is the density comonad of $Fu$.
\end{lemma}
\begin{proof}
  We have $\Lan_{Fu} (Fu) \cong F (\Lan_{Fu} u)$ because left adjoints preserve left Kan extensions \cite[Theorem X.5.1]{mac-lane:98}, and $\Lan_{Fu} u \cong (\Lan_u u)G$ by the isomorphism of comma categories $\Comma{Fu}{D} \cong \Comma{u}{GD}$ natural in $D \in \cat{D}$.
\end{proof}

\begin{corollary}
  \label{cofibration-density-calculation}
  For any $t \co \CofT \mono \Cof$ in $\cat{E} = \PSh{\cat{C}}$, the density comonad of $u^t$ is $\phi^t\nu^t \co \Arr{\cat{E}} \to \Arr{\cat{E}}$.
\end{corollary}
\begin{proof}
  The Yoneda embedding $\yo \co \catEl_{\cat{C}} \Cof \to \Slice{\cat{E}} \Cof$ is a dense functor, so its density comonad is the identity \cite[Proposition 1 and Corollary 3]{mac-lane:98}.
  Since $u^t = \phi^t\yo$ by definition, it follows from \cref{lan-adjoint} that $\phi^t\Den{\yo}\nu^t$ is the density comonad of $u^t$.
\end{proof}

\begin{lemma}
  \label{lan-coproduct}
  Let $S$ be a set and $u_i \co \cat{J}_i \to \cat{C}$ for $i \in S$ be a family of diagrams in a cocomplete category $\cat{E}$.
  If $u \co \coprod_{i \in S} \cat{J} \to \cat{C}$ is the induced map from the coproduct, then the density comonoad of $u$ is the functor $C \mapsto \coprod_{i \in S} \Den{u_i}C$.
\end{lemma}
\begin{proof}
  The density comonad $\Den{u}$ is computed at $C \in \cat{C}$ by the colimit
  \begin{align*}
    \Den{u}(C)
    &\cong \colim \left(
      \begin{tikzcd}[cramped,ampersand replacement=\&,sep=small]
      \Comma{u}{C} \ar{r}{\proj{}} \& \coprod_{i \in S} \cat{J}_i \ar{r}{u} \&[1em] \cat{E}
      \end{tikzcd}\right) \\
    &\cong \textstyle\coprod_{i \in S} \colim \left(
      \begin{tikzcd}[cramped,ampersand replacement=\&,sep=small]
        \Comma{u_i}{C} \ar{r}{\proj{}} \& \cat{J_i} \ar{r}{u_i} \&[1em] \cat{E}
      \end{tikzcd}\right) \\
    &\cong \textstyle\coprod_{i \in S} \Den{u_i}(C) \rlap{.} \qedhere
  \end{align*}
\end{proof}

\begin{corollary}
  \label{trivial-cofibration-density-calculation}
  For a uniform fibration configuration $(t,I)$ on a presheaf category $\cat{E}$, the density comonad associated to $\boxDiagram{t}{I} \co \Arr{\cat{E}} \to \Arr{\cat{E}}$ is given by $\Den{\boxDiagram{t}{I}}f \cong \leibpushapp{{\delta^0}}\Den{u^t}(\leibpullapp{{\partial_0}}(f)) \sqcup \leibpushapp{{\delta^1}}\Den{u^t}(\leibpullapp{{\partial_1}}(f))$.
\end{corollary}
\begin{proof}
  By \cref{lan-adjoint,lan-coproduct,cofibration-density-calculation}.
\end{proof}

We also need to know that $\Den{\boxDiagram{t}{I}}$ interacts well with levelwise complemented monomorphisms.

\begin{notation}
  Given a presheaf category $\cat{E}$, write $\LCMono{\cat{E}} \hookrightarrow \cat{E}$ for its wide subcategory of levelwise complemented monomorphisms.
  Write $\CartArr{\cat{E}} \hookrightarrow \Arr{\cat{E}}$ for the wide subcategory of pullback squares, $\CartMono{\cat{E}} \hookrightarrow \CartArr{\cat{E}}$ for its full subcategory whose objects are the monomorphisms in $\cat{E}$, and $\CartLCMono{\cat{E}} \hookrightarrow \CartLCMono{\cat{E}}$ for the further full subcategory of maps in $\LCMono{\cat{E}}$.
\end{notation}

\begin{lemma}
  \label{pushforward-levelwise-complemented}
  If $f \colon Y \to X$ is a locally finite morphism in $\cat{E} = \PSh{\cat{C}}$, then the functorial action of the pushforward $\ran{f} \colon \Slice{\cat{E}}{Y} \to \Slice{\cat{E}}{X}$ (that is, the right adjoint to pullback) preserves $\LCMono{\cat{E}} \hookrightarrow \cat{E}$.
\end{lemma}
\begin{proof}
  A monomorphism $m \colon A \mono B$ is in $\LCMono{\cat{E}}$ if and only if for every $d \co \yo c \to B$, we can decide whether $d$ lifts through $m$.
  Let $m \colon (A,a) \mono (B,b)$ be a morphism in $\Slice{\cat{E}}{Y}$ whose underlying morphism is in $\LCMono{\cat{E}}$ and write $\ran{f}m \colon \ranob{f}A \mono \ranob{f}B$ for the morphism in $\cat{E}$ underlying its pushforward along $f$.
  For $d \co \yo c \to \ranob{f}B$, we see by transposition that $d$ lifts through $\ran{f}m$ if and only if we have a lift as indicated in the diagram
  \[
    \begin{tikzcd}[column sep=large]
      \yo c \times_{X} Y \ar{dr}[below left,pos=.4]{\projr} \ar[dashed]{r} \ar[bend left]{rr}{d^\dagger} & A \ar{d}[left,pos=.4]{a} \ar[mono]{r}[pos=.4]{m} & B \ar{dl}[pos=.3]{b} \rlap{,} \\
        & Y
    \end{tikzcd}
  \]
  \ie, if and only if $d^\dagger \colon \yo \times_X Y \to B$ lifts through $m$.
  The object $\yo c \times_X Y$ is the domain of $u^f(d)$, thus a finite colimit of representables by assumption.
  If $(\inj{i} \colon \yo c_i \to \yo c \times_X Y)_{i \in I}$ is the colimit cocone, then $d^\dagger$ lifts through $m$ if and only if $d^\dagger\inj{i} \colon \yo c_i \to Y$ lifts through $m$ for all $i \in I$, and this we can decide by assumption.
\end{proof}

\begin{lemma}
  \label{cofibrations-mono}
  Let $(t,I)$ be a uniform fibration configuration in a presheaf category $\cat{E}$.
  The density comonad $\Den{u^t} \co \Arr{\cat{E}} \to \Arr{\cat{E}}$ lifts through $\CartMono{\cat{E}} \hookrightarrow \Arr{\cat{E}}$ and preserves monomorphisms.
  If $(t,I)$ is finitary, then the density comonad furthermore lifts through $\CartLCMono{\cat{E}} \hookrightarrow \Arr{\cat{E}}$ and preserves $\LevelClass{\cat{E}}{\LCMono{\cat{E}}}$.
\end{lemma}
\begin{proof}
  By \cref{cofibration-density-calculation} we have $\Den{u^t} \cong \phi^t\nu^t$.
  The functor $\phi^t$ sends all morphisms in $\Slice{\cat{E}}{\Cof}$ to cartesian squares by pullback pasting, so $\phi^t$ and thus $\Den{u^t}$ lifts through $\CartArr{\cat{E}} \hookrightarrow \Arr{\cat{E}}$.
  Likewise, $\phi^t$ is valued in monomorphisms and in $\CartLCMono{\cat{E}}$ if $(t,I)$ is finitary, so $\Den{u^t}$ lifts through $\CartMono{\cat{E}} \hookrightarrow \Arr{\cat{E}}$ and through $\CartLCMono{\cat{E}} \hookrightarrow \Arr{\cat{E}}$ if $(t,I)$ is finitary.

  We know $\nu^t$ preserves monomorphisms as a right adjoint, and $\phi^t$ preserves monomorphisms by cancellation, so $\Den{u^t}$ preserves monomorphisms.
  Suppose $(t,I)$ is finitary.
  Inspecting the decompositions of $\phi^t$ and $\nu^t$ in \cref{ut-density-adjoints}, we can see that $\Den{u^t}$ will preserve $\LevelClass{\cat{E}}{\LCMono{\cat{E}}}$ as long as the action of $\ran{(t,\id_{\Cof})} \colon \Slice{\Arr{\cat{E}}}{t} \to \Slice{\Arr{\cat{E}}}{\id_{\Cof}}$ preserves $\LevelClass{\cat{E}}{\LCMono{\cat{E}}}$.
  This follows from \cref{pushforward-levelwise-complemented} applied with the presheaf category $\Arr{\cat{E}} \simeq \PSh{\Arr{\cat{C}}}$ and the morphism $(t,\id_{\Cof}) \co t \to \id_{\Cof}$, which is locally finite if $t$ is.
\end{proof}

\begin{corollary}
  \label{trivial-cofibrations-mono}
  For any uniform fibration configuration $(t,I)$ on a presheaf category $\cat{E}$, if the monomorphisms form a $\kappa$-backdrop (\cref{monomorphisms-backdrop}) then the diagram $\boxDiagram{t}{I}$ is compatible with this backdrop.
  If $(t,I)$ is finitary, then $\boxDiagram{t}{I}$ is compatible with the backdrop of levelwise complemented monomorphisms (\cref{complemented-backdrop}).
\end{corollary}
\begin{proof}
  For the first claim, it suffices by \cref{union-backdrop-compatible} to check that $\Den{\boxDiagram{t}{I}}$ lifts through $\CartMono{\cat{E}} \hookrightarrow \Arr{\cat{E}}$ and preserves levelwise monomorphisms.
  By \cref{cofibrations-mono,trivial-cofibration-density-calculation}, it suffices in turn to show that each $\leibpushapp{{\delta^i}} \co \Arr{\cat{E}} \to \Arr{\cat{E}}$ restricts to a functor $\CartMono{\cat{E}} \to \CartMono{\cat{E}}$ and that this restriction preserves monomorphisms; note that the functors $\leibpullapp{{\partial_i}}$ are right adjoints and thus automatically preserve monomorphisms.

  For $i \in \braces{0,1}$ and a monomorphism $m \co A \to B$ in $\cat{E}$, $\leibpushapp{{\delta^i}}(f)$ is defined as the pushout gap map
  \[
    \begin{tikzcd}[column sep=large]
      A \ar{d}[left]{m} \ar{r}{\delta^i_A} & \lan{I}A \ar{d} \ar[bend left=20]{ddr}{\lan{I}m} \\
      B \ar[bend right=20]{drr}[below]{\delta^i_B} \ar{r} & \pushout \object \ar{dr}[pos=0.2]{\leibpushapp{{\delta^i}}(m)} \\[-2em]
      & & \lan{I}B \rlap{.}
    \end{tikzcd}
  \]
  Combined with the assumptions that $\delta^i$ is cartesian and that $\lan{I}$ preserves pullbacks and thus monomorphisms, this makes $\leibpushapp{{\delta^i}}(m)$ the union of the monomorphisms $\delta^i_B$ and $\lan{I}m$ \cite[Theorem 5.1]{lack-sobocinski:05} and thus a monomorphism.

  For any $f \co B' \to B$, $\leibpushapp{{\delta^i}}$ applied to the pullback of $m$ along $f$ is the pullback of $\leibpushapp{\delta^i}(m)$ along $\lan{I}f$, by pullback stability of colimits in $\cat{E}$ together with the assumptions that $\lan{I}$ preserves pullbacks and $\delta_i$ is cartesian.
  Thus $\leibpushapp{\delta^i}$ lifts to a functor $\CartMono{\cat{E}} \to \CartMono{\cat{E}}$.

  Given a levelwise monomorphism $(k,\ell) \co m \to n$ in $\CartMono{\cat{E}}$, the codomain component of $\leibpushapp{\delta^i}(k,\ell)$ is a monomorphism because $\lan{I}$ preserves monomorphisms, whence the domain component is a monomorphism by cancellation.

  We now assume $(t,I)$ is finitary and check the second claim.
  It suffices by \cref{union-backdrop-compatible} to check that $\Den{\boxDiagram{t}{I}}$ lifts through $\CartLCMono{\cat{E}} \hookrightarrow \Arr{\cat{E}}$ and preserves $\LevelClass{\cat{E}}{\LCMono{\cat{E}}}$.
  The first property follows from the assumptions that $\delta^i$ is valued in and $I_!$ preserves $\LCMono{\cat{E}}$ and the closure of levelwise complemented monomorphisms under binary union \cite[Lemma 2.1.7(i)]{gambino-sattler-szumilo:22}.
  For the second, we first observe that $\leibpullapp{{\partial_i}}$ preserves $\LevelClass{\cat{E}}{\LCMono{\cat{E}}}$ because $\subst{I}$ preserves $\LCMono{\cat{E}}$ and complemented monomorphisms are closed under finite limits \cite[Lemma 2.1.7(ii)]{gambino-sattler-szumilo:22}.
  Thus it is enough, as in the monomorphism case, to check that the restriction of $\leibpushapp{{\delta^i}}$  to $\CartLCMono{\cat{E}}$ preserves $\LevelClass{\cat{E}}{\LCMono{\cat{E}}}$, and this follows from cancellation for levelwise complemented monomorphisms.
\end{proof}

\begin{theorem}
  \label{uniform-fibrations}
  
\end{theorem}
\begin{proof}
  By \cref{soa-backdrop} applied either with the $\kappa$-backdrop of monomorphisms or with the $\omega$-backdrop $\LCMono{\cat{E}}$ and with the diagram $\boxDiagram{t}{I}$, which is suitable by \cref{trivial-cofibrations-mono}.
  For $i \in \braces{0,1}$ and $a \co \yo c \to \Cof$, the domain of $\leibpushapp{{\delta^i}}u^t(a)$ is the pushout
  \[
    \begin{tikzcd}
      \dom u^t(a) \ar[mono]{d} \ar{r}{\delta^i_A} & \lan{I}(\dom u^t(a)) \ar[mono]{d} \\
      \yo c \ar{r} & \pushout \object \rlap{.}
    \end{tikzcd}
  \]
  Since $(\kappa,\class{M})$-small objects are closed under finite colimits, the smallness condition on $\boxDiagram{t}{I}$ is thus satisfied in each case.
\end{proof}

In the following sections, we use uniform fibration \textsc{awfs}'s to illustrate applications of the theorems of \cref{sec:soa:algebraic-saturation}.
To avoid becoming repetitive we focus on the case where $(t,I)$ is finitary.

\subsection{Action of a functor on left maps}
\label{sec:applications:action}

Suppose we have a functor $F \co \cat{E} \to \cat{F}$ and a \textsc{wfs} $(\class{L},\class{R})$ on $\cat{E}$ generated by a set $S$ using Quillen's small object argument, and we want to show that $F$ sends $\class{L}$ into some saturated class of maps $\class{A}$---perhaps the left class of another \textsc{wfs} on $\cat{F}$.
The saturation characterization of $\class{L}$ tells us that if $F$ preserves cell complex structure, then it is enough to check that $F$ sends $S$ into $\class{A}$.
For our first application, we prove an analogue of this result for \textsc{awfs}'s cofibrantly generated by a diagram.

\begin{definition}
  Let $F \co \cat{E} \to \cat{F}$ be a functor and $U \co \dcat{A} \to \Sq{\cat{F}}$ be a notion of composable structure.
  We define the notion of composable structure $\subst{F}U \co \subst{F}\dcat{A} \to \Sq{\cat{E}}$ by pulling back along $F$:
  \begin{equation}
    \label{preimage-vertical-structure-pullback}
    \begin{tikzcd}[column sep=large]
      \VertArr{(\subst{F}\dcat{A})} \ar[dashed]{d}[left]{\VertArr{(\subst{F}U)}} \pullback \ar[dashed]{r} & \VertArr{\dcat{A}} \ar{d}{\VertArr{U}} \\
      \Arr{\cat{E}} \ar{r}[below]{\Arr{F}} & \Arr{\cat{F}} \rlap{.}
    \end{tikzcd}
  \end{equation}
  That is, a vertical morphism $A \verto B$ in $\subst{F}\dcat{A}$ consists of a morphism $f \co A \to B$ in $\dcat{D}$ together with a vertical morphism $\bm{g} \co FA \verto FB$ in $\dcat{A}$ such that $U\bm{g} = Ff$.
  Likewise, a square in $\subst{F}\dcat{A}$ consists of a commutative square in $\cat{E}$ together with a square of $\dcat{A}$ over its image by $F$.
  Identities and composition are given in the evident way.
  The diagram \eqref{preimage-vertical-structure-pullback} extends to a double functor
  \[
    \begin{tikzcd}[column sep=large]
      \subst{F}\dcat{A} \ar[dashed]{d}[left]{\subst{F}U} \pullback \ar[dashed]{r} & \dcat{A} \ar{d}{U} \\
      \Sq{\cat{E}} \ar{r}[below]{\Sq{F}} & \Sq{\cat{F}} \rlap{.}
    \end{tikzcd}
  \]
\end{definition}

\begin{proposition}
  \label{preimage-left-connected}
  Let $U \co \dcat{A} \to \Sq{\cat{F}}$ be a notion of composable structure and $F \co \cat{E} \to \cat{F}$ be a functor.
  If $U$ is left-connected, then so is $\subst{F}U$.
\qed
\end{proposition}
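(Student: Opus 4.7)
The plan is to reduce the claim to the observation that the strict pullback~\eqref{preimage-vertical-structure-pullback} defining $\subst{F}\dcat{A}$ puts the vertical arrows and squares of $\subst{F}\dcat{A}$ sitting over a given datum in $\Sq{\cat{D}}$ in bijection with the vertical arrows and squares of $\dcat{A}$ sitting over the $F$-image of that datum in $\Sq{\cat{E}}$. This is the only content required: the definition of $\subst{F}U$ as a strict pullback means that $\subst{F}U$-structures on a given morphism of $\cat{D}$ are literally $U$-structures on its $F$-image, so any fibrewise property of $U$ transports mechanically to $\subst{F}U$.

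First I would recall the precise formulation of left-connectedness used earlier in the paper: for a notion of composable structure $V \co \dcat{B} \to \Sq{\cat{X}}$, it amounts to a connectedness (and non-emptiness) condition on a category of $V$-structures on each morphism of $\cat{X}$, whose objects are vertical arrows of $\dcat{B}$ lying over that morphism and whose morphisms are squares of $\dcat{B}$ lying over the identity square. I would then verify that the strict pullback \eqref{preimage-vertical-structure-pullback} restricts, for each $f \co A \to B$ in $\cat{D}$, to an isomorphism between the category of $\subst{F}U$-structures on $f$ and the category of $U$-structures on $Ff$. This is an immediate consequence of the universal property of the pullback applied levelwise (to objects and to morphisms of the fibre categories).

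Given this isomorphism, the conclusion follows: for every $f$ of $\cat{D}$, the category of $\subst{F}U$-structures on $f$ is isomorphic to the category of $U$-structures on $Ff$, which is non-empty and connected by the left-connectedness of $U$ applied to $Ff$. I do not expect any genuine obstacle; the only care required is the bookkeeping needed to match up the paper's precise formulation of ``left-connected'' with the pullback description of fibres of $\subst{F}U$, after which the proof reduces to a single line, which presumably explains the \texttt{\textbackslash qed} already appearing in the statement.
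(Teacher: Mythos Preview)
Your approach is correct and is precisely what the paper's bare \qed encodes: the strict pullback defining $\subst{F}\dcat{A}$ identifies vertical arrows and squares over a datum in $\Sq{\cat{D}}$ with those over its $F$-image in $\Sq{\cat{E}}$, so any condition phrased in terms of these fibres transfers automatically.

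One caveat worth flagging: your guess at the content of ``left-connected'' is off. Judging from the proof of \cref{extension-dcat-left-connected}, the condition is not that the category of structures on each morphism is non-empty and connected, but rather that for every vertical arrow $\bm{g}$ and every commutative square in the base from an identity to $U\bm{g}$, there is a \emph{unique} square in $\dcat{A}$ from the vertical identity to $\bm{g}$ lying over it. This is still fibrewise in exactly the sense you need---squares in $\subst{F}\dcat{A}$ over a given base square biject with squares in $\dcat{A}$ over its $F$-image, and the vertical identity in $\subst{F}\dcat{A}$ projects to the vertical identity in $\dcat{A}$---so your transport argument goes through unchanged once the correct definition is substituted.
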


\begin{proposition}
  \label{preimage-cellular}
  Let $U \co \dcat{A} \to \Sq{\cat{F}}$ be a notion of composable structure and $F \co (\cat{E},\class{M}) \to (\cat{F},\class{N})$ be a $\kappa$-backdrop-preserving functor.
  If $U$ is $(\kappa,\class{N})$-cellular, then $\subst{F}U \co \subst{F}\dcat{A} \to \Sq{\cat{E}}$ is $(\kappa,\class{M})$-cellular.
\end{proposition}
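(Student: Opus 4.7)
The plan is to transfer the cellular structure witnessed on $\dcat{A}$ by $U$ down to $\subst{F}\dcat{A}$ via the pullback \eqref{preimage-vertical-structure-pullback}, using $\kappa$-backdrop-preservation of $F$ to ensure the transferred structure is $(\kappa,\class{M})$-cellular rather than merely $(\kappa,\class{N})$-cellular on the nose.

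Concretely, I would fix a vertical morphism $(f,\bm{g}) \co A \verto B$ in $\subst{F}\dcat{A}$, so that $Ff = U\bm{g}$. The hypothesis on $U$ furnishes a $(\kappa,\class{N})$-cellular presentation of $\bm{g}$ in $\dcat{A}$, which lies over a corresponding cellular decomposition of $Ff$ in $\Arr{\cat{E}}$. Because $\subst{F}\dcat{A}$ is defined as a pullback at the level of arrows, this decomposition of $\bm{g}$ pairs with a cellular decomposition of $f$ in $\Arr{\cat{D}}$---taken along maps in $\class{M}$---to yield a presentation of $(f,\bm{g})$ in $\subst{F}\dcat{A}$. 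That the components are $(\kappa,\class{M})$-small and the attaching maps lie in $\class{M}$ is exactly what $\kappa$-backdrop-preservation of $F$ is designed to supply, carrying the relevant smallness and membership conditions between the two backdrops. Functoriality and coherence of the resulting structure on $\subst{F}U$ then follow from functoriality of the pullback construction.

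The main obstacle I foresee is the bookkeeping around colimits: the $\kappa$-filtered colimits witnessing cellularity on $\dcat{A}$ must pull back along $\Arr{F}$ to give $\kappa$-filtered colimits in $\Arr{\cat{D}}$ whose cells live in $\class{M}$ at $(\kappa,\class{M})$-size. Once the definition of ``$\kappa$-backdrop-preserving'' is unpacked and shown to deliver this compatibility, the remainder is a diagram chase through \eqref{preimage-vertical-structure-pullback}. I do not expect any further subtlety: cellularity is stable under the kind of strict fiber-product construction used to form $\subst{F}\dcat{A}$, provided the backdrop-compatibility hypothesis does the work it was introduced to do.
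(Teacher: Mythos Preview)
Your proposal misreads the definition of $(\kappa,\class{N})$-cellular. It is not an assertion that each vertical morphism of $\dcat{A}$ admits a cell-complex decomposition; it is a closure property of the notion of composable structure $U$ itself: the relevant colimits (cobase changes along maps in $\DCodClass{\dcat{A}}{\class{N}}$ and $\alpha$-indexed colimits in that class for $\alpha \le \kappa$) exist in $\VertArr{\dcat{A}}$ and are preserved by $\VertArr{U}$. Nothing in the hypothesis furnishes a decomposition of a given $\bm{g}$, and there is certainly no given cellular decomposition of the arbitrary underlying $f \in \Arr{\cat{D}}$ to pair with it. The references to ``$(\kappa,\class{M})$-small components'' and ``$\kappa$-filtered colimits'' are likewise not part of the notion.

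The argument the paper gives is the one you gesture at in your final sentence but never carry out. One observes that the cospan defining the pullback \eqref{preimage-vertical-structure-pullback}, restricted to the subcategories of maps with codomain in the backdrop, becomes a cospan of $\kappa$-backdrop-preserving functors: $\Arr{F}$ is one because $F$ is $\kappa$-backdrop-preserving, and $\VertArr{U}$ is one precisely because $U$ is $(\kappa,\class{N})$-cellular. Hence the relevant colimits in the pullback $\VertArr{(\subst{F}\dcat{A})}$ exist, are computed componentwise, and are preserved by the projection $\VertArr{(\subst{F}U)}$. That is the whole proof; no decomposition of individual vertical morphisms enters.
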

\begin{proof}
  The projections from $\VertArr{(\subst{F}\dcat{A})}$ to $\Arr{\cat{E}}$ and $\VertArr{\dcat{A}}$ send $\DCodClass{(\subst{F}\dcat{A})}{\class{M}}$ into $\CodClass{\cat{E}}{\class{M}}$ and $\DCodClass{\dcat{A}}{\class{N}}$ respectively, and the cospan of the defining pullback \eqref{preimage-vertical-structure-pullback} extends to a diagram of $\kappa$-backdrop-preserving functors
  \[
    \begin{tikzcd}[sep=large]
       & (\VertArr{\dcat{A}},\DCodClass{\dcat{A}}{\class{N}}) \ar{d}{\VertArr{U}} \\
      (\Arr{\cat{E}},\CodClass{\cat{E}}{\class{M}}) \ar{r}[below]{\Arr{F}} & (\Arr{\cat{F}},\CodClass{\cat{F}}{\class{N}}) \rlap{.}
    \end{tikzcd}
  \]
  It follows that that the relevant colimits in $\VertArr{(\subst{F}\dcat{A})}$ can be computed levelwise and that the pullback projection $\VertArr{(\subst{F}U)}$ preserves them, as required.
\end{proof}

\begin{proposition}
  \label{preimage-retracts}
  Let $F \co \cat{E} \to \cat{F}$ be a functor and $U \co \dcat{A} \to \Sq{\cat{F}}$ be a notion of composable structure.
  Any (compositional) codomain retract lifting operator on $U$ induces a (compositional) codomain retract lifting operator on $\subst{F}U$.
\qed
\end{proposition}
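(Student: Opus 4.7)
My plan is to reason directly via the universal property of the defining pullback \eqref{preimage-vertical-structure-pullback}. A codomain retract lifting operator on $U$ provides, for every suitable piece of retract-datum whose underlying vertical structure lives in $\VertArr{\dcat{A}}$, a canonical lift to a vertical morphism of $\dcat{A}$. I want to induce the analogous operator on $\subst{F}U$ by transporting each such piece of data through the pullback, solving the $\dcat{A}$-side using the given operator and letting the pullback universal property do the assembly.

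Given a codomain retract-datum in $\subst{F}\dcat{A}$, its image under the right-hand projection to $\VertArr{\dcat{A}}$ is a retract-datum on which the given operator on $U$ acts, producing a vertical morphism $\bm{g}$ in $\dcat{A}$. Its image under the lower projection to $\Arr{\cat{D}}$ is a codomain retract diagram in $\cat{D}$, which already carries its own underlying vertical morphism $f \co A \to B$ in $\cat{D}$---no operator needs to be invoked on this side. Since the pair of retract-data was compatible to begin with, i.e., the $\cat{E}$-side square is $\Arr{F}$ applied to the $\cat{D}$-side square, the vertical morphisms $Ff$ and $U\bm{g}$ coincide. The universal property of the pullback \eqref{preimage-vertical-structure-pullback} then assembles $(f,\bm{g})$ into a vertical morphism of $\subst{F}\dcat{A}$, which is the required lift.

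For the compositional variant, I would observe that composition in $\subst{F}\dcat{A}$ is computed componentwise---separately in $\cat{D}$ and in $\dcat{A}$---so any axiom asserting compatibility of the induced operator with composition reduces to the corresponding axiom for the given operator on $U$ together with the tautological fact that composition in $\cat{D}$ respects itself. The main thing to verify is thus purely bookkeeping: that every retract-datum in $\subst{F}\dcat{A}$ decomposes uniquely into a compatible pair of retract-data and that the assembled lift genuinely solves the retract problem. Both checks follow immediately from functoriality of $F$ applied to the retract diagram and from the explicit description of vertical morphisms in $\subst{F}\dcat{A}$ recalled in the defining pullback, so the proposition reduces to a formal pullback argument with no serious obstacle---fitting with the author's decision to discharge it with $\qed$ on the statement.
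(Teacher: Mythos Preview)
Your argument is correct and is exactly the evident one the paper has in mind; since the paper discharges the proposition with a bare \qed, there is nothing further to compare. One small wording quibble: the map you produce on the $\cat{D}$ side should be the \emph{retract} $f'$ rather than the original $f$, so that the compatibility you need is $U\bm{g} = Ff'$---but this is clearly what you intend, and the rest of your bookkeeping is fine.
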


We now deduce the following generalization of \cref{soa-copointed-coalgebras-functor,soa-copointed-coalgebras-double-functor}.

\begin{theorem}
  \label{preimage-coalgebras-functor}
  
\end{theorem}
\begin{proof}
  We apply \cref{soa-copointed-coalgebras-functor} with the $\kappa$-backdrop $\class{M}$ and the notion of composable structure
  \[
    \subst{F}U \co \subst{F}\dcat{A} \to \Sq{\cat{E}} \rlap{,}
  \]
  which is $(\kappa,\class{M})$-cellular by \cref{coalgebra-double-category-cellular,preimage-cellular} and admits retract lifting by \cref{preimage-retracts}.
  We obtain a pseudo double functor
  \[
    \begin{tikzcd}[row sep=large]
      \Coalg{\UnderPtd{\comonad{L}}} \ar{dr}[below left]{U_{\UnderPtd{\comonad{L}}}} \ar[dashed]{rr} &[-1em]&[-1em] \VertArr{\subst{F}\dcat{A}} \ar{dl}{\VertArr{\subst{F}U}} \ar{r} & \VertArr{\dcat{A}} \ar{d}{\VertArr{U}} \\
      & \Arr{\cat{E}} \ar{rr}[below]{\Arr{F}} & & \Arr{\cat{F}} \rlap{.}
    \end{tikzcd}
  \]
  The top horizontal composite is the desired functor $j \co \Coalg{\UnderPtd{\comonad{L}}} \to \VertArr{\dcat{A}}$.
  When $\dcat{A}$ is thin and its codomain retract lifting operator is compositional, we can instead apply \cref{soa-copointed-coalgebras-double-functor} to get a pseudo double functor.
\end{proof}

\begin{remark}
  Note that the analogous theorem for Quillen's small object argument only requires $F$ to preserve cobase changes and transfinite composites of maps in the left class, whereas our theorem requires this for the larger class $\class{M}$.
  It is not clear to us whether the stronger result can be derived from ours when the generating diagram is discrete.
\end{remark}

A typical application is to show that that a backdrop-preserving functor sends the left maps of one \textsc{awfs} to left maps of another \textsc{awfs}.
In the specific case of a uniform fibration $\textsc{awfs}$, the density comonad condition reduces to a natural requirement on pushout applications of the endpoints $\delta_i$ to cofibrations:

\begin{example}
  \label{functor-preserves-left-maps-uniform}
  Let $(t,I)$ be a finitary uniform fibration configuration in a presheaf category $\cat{E} = \PSh{\cat{C}}$.
  Let $(\comonad{L},\comonad{R})$ be an \textsc{awfs} on a category $\cat{F}$ and let $F \co (\cat{E},\LCMono{\cat{E}}) \to (\cat{F},\class{N})$ be an $\omega$-backdrop-preserving functor.
  If $F\leibpushapp{\delta_i}\phi^t \co \Slice{\cat{E}}{\Cof} \to \Arr{\cat{F}}$ lifts through $\Coalg{\UnderPtd{\comonad{L}}}$ for $i \in \braces{0,1}$, then $\Sq{F} \co \Sq{\cat{E}} \to \Sq{\cat{F}}$ lifts to a double functor $\DTCof{t}{I} \to \DCoalg{\UnderPtd{\comonad{L}}}$.
\end{example}
\begin{proof}
  The notion of composable structure $\DCoalg{\UnderPtd{\comonad{L}}}$ with the backdrop $\class{N}$ satisfies the requirements of \cref{preimage-coalgebras-functor} by \cref{coalgebra-double-category-cellular,coalgebra-retract-lifting-compositional}.
  The result follows using \cref{trivial-cofibrations-mono,trivial-cofibration-density-calculation}.
  Note that the pseudo double functor we obtain is necessarily a double functor because $\VertArr{U_{\UnderPtd{\comonad{L'}}}}$ is amnestic, \ie, any isomorphism it sends to an identity is itself an identity.
\end{proof}

\begin{example}
  In work currently in progress, we use \cref{functor-preserves-left-maps-uniform} to analyze exponentiation by quotients of representables in cubical set categories.
  We briefly sketch the situation here, using the Boolean cubical sets from \cref{cubical-sets-example}.

  For this form of cubical set, we have an isomorphism $\sigma \co \ival \cong \ival$ that swaps $0$ and $1$.
  Exponentiation by the quotient $\ival/\sigma$ in $\PSh{\cat{C}}$ defines a functor $(-)^{\ival/\sigma} \co \PSh{\cat{C}} \to \PSh{\cat{C}}$ that does not preserve all colimits, but turns out to preserve colimits of $\omega$-chains and pushouts along monomorphisms.
  It moreover preserves $\ival$: the inclusion of constant maps $\ival \to \ival^{\ival/\sigma}$ is an isomorphism.
  It follows from \cref{functor-preserves-left-maps-uniform} that $(-)^{\ival/\sigma}$ preserves the trivial cofibrations of any finitary uniform fibration configuration for which $(-)^{\ival}$ (and thus $(-)^{\ival/\sigma}$) preserves cofibrations.
  Ken Brown's lemma \cite[Lemma 1.1.12]{hovey:99} then implies that $(-)^{\ival/\sigma}$ preserves weak equivalences between cofibrant objects.

  This implies that the model structure mentioned in \cref{cubical-sets-example} does not coincide, in classical set theory, with Cisinski's \emph{test model structure} on this presheaf category \cite{cisinski:06,buchholtz-morehouse:17}.
  Indeed, the object $\ival/\sigma$ is contractible in the test model structure but cannot be contractible in the uniform fibration model structure: the argument above would then imply that $(\ival/\sigma)^{\ival/\sigma}$ is also contractible, but $(\ival/\sigma)^{\ival/\sigma}$ is isomorphic to $1 \sqcup \ival/\sigma$ and thus patently non-contractible.

  In the case of \emph{cartesian cubical sets}, an unpublished note of \textcite{coquand:18:counterexample} describes an argument by the second author that the quotient of the 2-cube by the reflection along its diagonal is not contractible in a uniform fibration model structure, using an explicit inductive description of the (trivial cofibration, fibration) factorization for that model structure.
  Our saturation principle abstracts the inductive process involved in this argument.
\end{example}

\subsection{Extension operations}
\label{sec:applications:extension}

As a second application, we consider the construction of \emph{extension operations}.
The general pattern is that we have some notion of structure on objects of our category and want to show that any structure on the domain of a left map induces a compatible structure on its codomain.
The motivating example is the extension of \emph{fibrations along trivial cofibrations}, where the structure on an object is a fibration over it:

\begin{example}
  A Quillen premodel category \cite{barton:19} has the \emph{fibration extension property} when for any trivial cofibration $m \co A \tcof B$ and fibration $p \co X \fib A$ over its domain, there is a square
  \[
    \begin{tikzcd}
      X \pullback \ar[fib]{d}[left]{p} \ar[dashed]{r} & Y \ar[dashed,fib]{d} \\
      A \ar[tcof]{r}[below]{m} & B
    \end{tikzcd}
  \]
  exhibiting $p$ as a pullback of a fibration over $B$.
\end{example}

The property is used in homotopical semantics of type theory, for example by \textcite[Proposition 2.21]{cisinski:14} to build interpretations of type theory from Quillen model categories, and conversely by \textcite{sattler:17} and others \cite{awodey:23,cavallo-sattler:23,accrs:24} to build Quillen model categories from or in conjunction with interpretations of type theory.
It is, in particular, connected to fibrancy of universes classifying fibrations: under some conditions, a fibration $p \co \widetilde{U} \fib U$ will have a fibrant base object $U$ if and only if $U$-small fibrations extend along trivial cofibrations \cite[Remark 7.6]{sattler:17}~\cite[Proposition 2.2.4]{stenzel:19}.
It and related properties also appear in homotopy theory more broadly (\eg, \cite[Lemma 1.7.1]{joyal-tierney:99}~\cite[Corollary 5.2.11]{cisinski:19}).

In these situations, one often first shows explicitly that fibrations extend along \emph{generating} trivial cofibrations.
When the extension property can be reformulated as fibrancy of a hierarchy of universes, extension along generators immediately gives extension along all trivial cofibrations.
If one is primarily interested in building a model category, however, it would be conceptually cleaner not to depend on a stratification of the category by such a hierarchy.

Instead, one can use a decomposition of trivial cofibrations by some kind of small object argument, as is done in \textcite[Corollary 7.4]{sattler:17} and \textcite[Corollary 4.2.7]{gambino-sattler-szumilo:22}.
We show in this section how to carry out such an argument for a class of trivial cofibrations generated using the \emph{algebraic} small object argument, where previous work was limited to factorization systems generated by Quillen's argument.

\begin{remark}
  An alternative approach to the following result is described in unpublished work of the second author \cite{sattler-free-monad}, using the fine-grained functoriality of \cref{sec:free-monad-sequence} directly in place of this article's saturation machinery.
  That approach has the advantage of avoiding pushforward manipulations by working with individual extension problems in place of extension operations.
\end{remark}

We consider notions of structure on objects of a category given as a Grothendieck fibration over that category.
We first fix some notation.

\begin{notation}
  Let $P \co \cat{E} \to \cat{B}$ be a Grothendieck fibration.
  \begin{itemize}
  \item For $a \in \cat{B}$, we write $\cat{E}_a$ for the fiber of $P$ over $a$.
  \item Similarly, for $u \co a \to b$ in $\cat{B}$ we write $\Arr{\cat{E}}_u$ for the fiber of $\Arr{P} \co \Arr{\cat{E}} \to \Arr{\cat{B}}$ over $u$.
  \item We assume that our fibrations come with a choice of cartesian lifts (\ie, are \emph{cloven}) and write $\overline{u}e \co \subst{u}e \to e$ for the chosen cartesian lift of some $e \in \cat{E}_a$ along $u \co b \to a$.
  \item We write $\FibOb{\cat{B}}{P} \co \FibOb{\cat{B}}{\cat{E}} \to \cat{B}$ for the restriction of $P$ to the wide subcategory $\FibOb{\cat{B}}{\cat{E}}$ consisting of $P$-cartesian morphisms (sometimes called the \emph{fibration of objects}).
  \end{itemize}
\end{notation}

We will also need an analogue of the concept of \emph{Van Kampen colimit} \cite{lack-sobocinski:04,cockett-xuo:07,heindel-sobocinski:11} in the setting of a Grothendieck fibration.

\begin{definition}
  \label{van-kampen}
  Let $P \co \cat{E} \to \cat{B}$ be a Grothendieck fibration.
  We say that a colimit cocone $\beta \co b \to \Delta b_0$ under a diagram $b \co \cat{K} \to \cat{B}$ is \emph{Van Kampen for $P$} when
  \begin{enumerate}[label=(\alph*),ref=\thedefinition(\alph*)]
  \item \label{vk-existence} every $e \co \cat{K} \to \FibOb{\cat{B}}{\cat{E}}$ over $b$ admits a colimit cocone in $\FibOb{\cat{B}}{\cat{E}}$ over $\beta$;
  \item \label{vk-restrict}
    given $e \co \cat{K} \to \FibOb{\cat{B}}{\cat{E}}$, every cocone $\eta \co e \to \Delta e_0$ in $\FibOb{\cat{B}}{\cat{E}}$ over $\beta$ is a colimit cocone in $\cat{E}$.
  \end{enumerate}
\end{definition}

\begin{remark}
  The cocone $\beta$ is Van Kampen for $P$ exactly if the corresponding pseudofunctor $\cat{E}_{(-)} \co \op{\cat{B}} \to \Cat$ sends it to a bilimit of categories.
\end{remark}

A Van Kampen colimit in the usual sense in a category $\cat{E}$ with pullbacks is then a colimit which is Van Kampen for the codomain fibration $\cod \co \Arr{\cat{E}} \to \cat{E}$.

\subsubsection{Cartesian pushforwards}

We want to associate left maps with \emph{operations} that take a structure on the domain as input and output an extension to the codomain.
To define a category of such operations, we make use of pushforwards in $\Cat$.
Recall that a functor $P \co \cat{E} \to \cat{B}$ is called \emph{exponentiable} if the pullback functor $\subst{P} \co \Slice{\Cat}{\cat{B}} \to \Slice{\Cat}{\cat{E}}$ has a right adjoint $\ran{P}$, and that any Grothendieck fibration is exponentiable \cite[Lemme 4.3, Th\'eor\`eme 4.4]{giraud:64}.
Given $F \co \cat{F} \to \cat{E}$, we write $\ran{P}F \co \ranob{P}{F} \to \cat{B}$ for the application of the right adjoint and call this the \emph{pushforward of $F$ along $P$}.

In fact we want a variation on the pushforward with a stronger condition on morphisms.

\begin{definition}
  Let $P \co \cat{E} \to \cat{B}$ and $Q \co \cat{F} \to \cat{B}$ be Grothendieck fibrations and $V \co (\cat{F},Q) \to (\cat{E},P)$ be a fibered functor over $\cat{E}$.
  The \emph{cartesian pushforward} $\ranc{P}{V} \co \rancob{P}{V} \to \cat{B}$ is the pullback
  \[
    \begin{tikzcd}
      \rancob{P}{V} \pullback \ar[dashed,bend left=20]{rr}{\ranc{P}{V}} \ar[dashed]{d} \ar[dashed]{r} & \prod_P V \ar{d} \ar{r}[below]{\ran{P}V} & \cat{B} \rlap{,} \\
      \prod_{\FibOb{\cat{B}}{P}} \FibOb{\cat{B}}{V} \ar{r} & \prod_{\FibOb{\cat{B}}{P}} V
    \end{tikzcd}
  \]
  where $\FibOb{\cat{B}}{V} \co \FibOb{\cat{B}}{P} \to \FibOb{\cat{B}}{Q}$ is the restriction of $V$ to cartesian arrows over $\cat{B}$.
\end{definition}

The objects $\xi \in \ranob{P}{V}$ over $b \in \cat{B}$ correspond, by transposition, to sections $\xi^\dagger \co \cat{E}_b \to \cat{F}_b$ of the restriction $V_b \co \cat{F}_b \to \cat{E}_b$ of $V$ to the fibers over $b$.
The category $\rancob{P}{V}$ has the same objects: they are sections $\cat{E}_b \to \cat{F}_b$ which preserve cartesian morphisms over $\cat{B}$, but this is no requirement since any vertical cartesian morphism is an isomorphism.
Its morphisms are however more restrictive than those of $\ranob{P}{V}$, as we now unpack.

\begin{proposition}
  \label{pushforward-morphisms}
  Let $P \co \cat{E} \to \cat{B}$ and $V \co \cat{F} \to \cat{E}$ be a functor.
  Given a morphism $\alpha \co b \to b'$ in $\cat{B}$ and objects $\xi \in \left(\ranob{P}{V}\right)_b$ and $\xi' \in \left(\ranob{P}{V}\right)_{b'}$, the morphisms $\beta \co \xi \to \xi'$ in $\ranob{P}{V}$ over $\alpha$ correspond to natural transformations
  \[
    \begin{tikzcd}[row sep=tiny,column sep=large]
      \cat{E}_b \ar{dr}{\xi^\dagger} \\
      { } \ar[phantom,pos=.3]{r}{\Downarrow \beta^{\ddagger}} & \cat{F} \\
      \cat{E}_{b'} \ar{uu}{\subst{\alpha}} \ar{ur}[below right]{\xi'^\dagger}
    \end{tikzcd}
  \]
  such that $V \beta^{\ddagger}_e = \overline{\alpha} e \co \subst{\alpha}e \to e$ for $e \in \cat{E}_{b'}$.
\end{proposition}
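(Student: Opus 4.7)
The plan is to unpack the universal property of the pushforward $\ran{P}V$ via the adjunction $\subst{P} \dashv \ran{P}$ and then recognize the resulting data as the stated natural transformation. Viewing $\alpha \co b \to b'$ as a functor $[1] \to \cat{B}$ from the walking arrow, a morphism $\beta \co \xi \to \xi'$ in $\ranob{P}{V}$ over $\alpha$ corresponds under the adjunction to a functor $\widetilde{\beta} \co [1] \times_{\cat{B}} \cat{E} \to \cat{F}$ over $\cat{E}$.

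Next I would analyze the pullback category $[1] \times_{\cat{B}} \cat{E}$: its objects are pairs $(i, e)$ with $P(e) = \alpha(i)$; its vertical hom-sets reproduce the fibers $\cat{E}_b$ and $\cat{E}_{b'}$, and its ``crossing'' hom-sets $(0, e_0) \to (1, e_1)$ consist of morphisms $e_0 \to e_1$ in $\cat{E}$ lying over $\alpha$. A functor $\widetilde{\beta}$ over $\cat{E}$ restricts on the two fibers to sections of $V_b$ and $V_{b'}$, which by transposition are exactly the data $\xi^\dagger$ and $\xi'^\dagger$; the remaining content is its action on crossing morphisms.

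The key observation is that every crossing morphism factors uniquely as $f = \overline{\alpha}e_1 \circ \widetilde{f}$, where $\widetilde{f} \co e_0 \to \subst{\alpha}e_1$ is vertical and $\overline{\alpha}e_1 \co \subst{\alpha}e_1 \to e_1$ is the chosen cartesian lift. Thus the restriction of $\widetilde{\beta}$ to crossing morphisms is completely determined by its values on the family of chosen cartesian lifts $\{\overline{\alpha}e\}_{e \in \cat{E}_{b'}}$. These values are morphisms $\beta^{\ddagger}_e \co \xi^\dagger(\subst{\alpha}e) \to \xi'^\dagger(e)$ in $\cat{F}$ lying over $\overline{\alpha}e$---which is exactly the condition $V(\beta^{\ddagger}_e) = \overline{\alpha}e$---and their functoriality with respect to vertical morphisms in $\cat{E}_{b'}$ is precisely naturality of the family as a transformation $\xi^\dagger \circ \subst{\alpha} \Rightarrow \xi'^\dagger$.

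The main step requiring care is the verification that every such natural transformation reassembles, together with $\xi^\dagger$ and $\xi'^\dagger$, into a genuine functor $\widetilde{\beta}$. This amounts to checking that the factorization $f = \overline{\alpha}e_1 \circ \widetilde{f}$ produces a well-defined, composition-preserving assignment on crossing morphisms once the components $\beta^\ddagger_e$ have been chosen; each required identity reduces, via the $1$-universal property of cartesian lifts, to an instance of the naturality of $\beta^\ddagger$ combined with the assumption that $V$ sends $Q$-cartesian morphisms to $P$-cartesian morphisms.
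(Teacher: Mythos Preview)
Your approach is exactly what the paper's one-line proof (``use the universal property of $\ran{P}V$ to characterize the functors $\2 \to \ranob{P}{V}$'') is gesturing at, and the unpacking is essentially correct.

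Two small inaccuracies to clean up, both stemming from confusion with the hypotheses of the \emph{subsequent} corollary rather than this proposition. First, in this proposition $V \co \cat{F} \to \cat{E}$ is merely a functor; there is no fibration $Q$ on $\cat{F}$, so the transposes $\xi^\dagger, \xi'^\dagger$ are functors $\cat{E}_b \to \cat{F}$ and $\cat{E}_{b'} \to \cat{F}$ lying over the inclusions into $\cat{E}$, not sections of restrictions $V_b$, $V_{b'}$ to fibers of some $Q$. Second, and for the same reason, your final clause ``combined with the assumption that $V$ sends $Q$-cartesian morphisms to $P$-cartesian morphisms'' invokes a hypothesis that is neither present nor needed here. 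The functoriality check in the reverse direction reduces entirely to the naturality of $\beta^\ddagger$: for $g$ vertical in $\cat{E}_{b'}$ and $f$ crossing, the identity $g \circ \overline{\alpha}e_1 = \overline{\alpha}e_1' \circ \subst{\alpha}g$ together with naturality gives $\widetilde{\beta}(g \circ f) = \widetilde{\beta}(g) \circ \widetilde{\beta}(f)$ directly. Drop the reference to $Q$ and the argument is complete.
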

\begin{proof}
  Use the universal property of $\ran{P}{V}$ to characterize the functors $\2 \to \ranob{P}{V}$.
\end{proof}

\begin{corollary}
  \label{cartesian-pushforward-morphisms}
  Let $P \co \cat{E} \to \cat{B}$ and $Q \co \cat{F} \to \cat{B}$ be Grothendieck fibrations and $V \co (\cat{F},Q) \to (\cat{E},P)$ be a fibered functor over $\cat{E}$.
  Given a morphism $\alpha \co b \to b'$ in $\cat{B}$ and objects $\xi \in \left(\rancob{P}{V}\right)_b$ and $\xi' \in \left(\rancob{P}{V}\right)_{b'}$, the morphisms $\beta \co \xi \to \xi'$ in $\rancob{P}{V}$ over $\alpha$ are those natural transformations as in \cref{pushforward-morphisms} which are valued in $Q$-cartesian morphisms. These correspond in particular to natural isomorphisms $\theta \co \xi^\dagger\subst{\alpha} \to \subst{\alpha}\xi'^\dagger$ such that
  \[
    \begin{tikzcd}
      \cat{E}_{b} \ar[phantom]{dr}{\theta} \ar{r}{\xi^\dagger} & \cat{F}_b \ar[phantomcenter]{dr}{\cong} \ar{r}{V_b} & \cat{E}_b \\
      \cat{E}_{b'} \ar{u}{\subst{\alpha}} \ar{r}[below]{\xi'^\dagger} & \cat{F}_{b'} \ar{u}{\subst{\alpha}} \ar{r}[below]{V_{b'}} & \cat{E}_{b'} \ar{u}[right]{\subst{\alpha}}
    \end{tikzcd}
    \quad
    =
    \quad
    \begin{tikzcd}
      \cat{E}_b \ar[equals]{r} & \cat{E}_b \\
      \cat{E}_{b'} \ar{u}{\subst{\alpha}} \ar[equals]{r} & \cat{E}_{b'} \ar{u}[right]{\subst{\alpha}} \rlap{.}
    \end{tikzcd}
  \]
\end{corollary}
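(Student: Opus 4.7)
The plan is to combine the characterization of morphisms in $\ranob{P}{V}$ from \cref{pushforward-morphisms} with the defining pullback of $\rancob{P}{V}$. By the universal property of that pullback, a morphism $\beta \co \xi \to \xi'$ over $\alpha$ in $\rancob{P}{V}$ is the data of a morphism in $\ranob{P}{V}$ and one in $\prod_{\FibOb{\cat{B}}{P}} \FibOb{\cat{B}}{V}$, both over $\alpha$, with matching images in $\prod_{\FibOb{\cat{B}}{P}} V$. Applying \cref{pushforward-morphisms} to each pushforward translates these into a natural transformation $\beta^\dagger$ as in \cref{pushforward-morphisms} whose restriction along the wide subcategory of vertical isomorphisms in $\cat{E}_{b'}$ factors through $\FibOb{\cat{B}}{\cat{E}} \hookrightarrow \cat{E}$ and through $\FibOb{\cat{B}}{\cat{F}} \hookrightarrow \cat{F}$. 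Because these inclusions are wide, the factorization is entirely a condition on components: each $\beta^\dagger_e$ must be a $Q$-cartesian morphism.

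For the second assertion, I would factor each $Q$-cartesian morphism $\beta^\dagger_e \co \xi^\dagger \subst{\alpha} e \to \xi'^\dagger e$ through the chosen cartesian lift $\overline{\alpha} \xi'^\dagger e \co \subst{\alpha} \xi'^\dagger e \to \xi'^\dagger e$ using the universal property of cartesian morphisms. This yields a unique vertical morphism $\theta_e \co \xi^\dagger \subst{\alpha} e \to \subst{\alpha} \xi'^\dagger e$ in $\cat{F}_b$, which is automatically a vertical isomorphism since its source and target are both $Q$-cartesian lifts of $\alpha$ with codomain $\xi'^\dagger e$. Naturality of $\theta$ in $e$ follows from naturality of $\beta^\dagger$ together with uniqueness of the factorization. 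The image constraint $V \beta^\dagger_e = \overline{\alpha} e$ from \cref{pushforward-morphisms} then rewrites, using that $V$ is a fibered functor so that $V$ of the cartesian lift in $\cat{F}$ equals the cartesian lift in $\cat{E}$ up to the pseudofunctoriality coherence $V_b \subst{\alpha} \cong \subst{\alpha} V_{b'}$, into precisely the pasting equation displayed in the statement.

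The main difficulty is organizational: one must carefully track the two cleavages (for $P$ and for $Q$) along with the preservation coherence of $V$, so that this coherence is correctly identified with the $\cong$ label appearing in the displayed diagram. Beyond this bookkeeping the argument is a direct application of the universal property of cartesian morphisms and of the strict pullback of categories.
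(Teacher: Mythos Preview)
Your proposal is correct and follows exactly the natural line of argument: unwind the defining pullback of $\rancob{P}{V}$, apply \cref{pushforward-morphisms} to each leg, and observe that the extra datum from $\prod_{\FibOb{\cat{B}}{P}} \FibOb{\cat{B}}{V}$ reduces to a componentwise $Q$-cartesianness condition, after which the bijection with natural isomorphisms $\theta$ follows from the universal property of cartesian lifts. The paper gives no explicit proof of this corollary, treating it as immediate from \cref{pushforward-morphisms} and the definition of the cartesian pushforward; your write-up is a faithful elaboration of that implicit argument, including the correct identification of the $\cong$ in the displayed pasting with the canonical coherence $V_b\,\subst{\alpha} \cong \subst{\alpha}\,V_{b'}$ arising from $V$ being fibred.
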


Since the goal is to build a cellular notion of composable structure for extension operations, we are interested in colimits in pushforward categories.
Here we need Van Kampen colimits in the base.

\begin{notation}
  Fix a Grothendieck fibration $P \co \cat{E} \to \cat{B}$, a diagram $b \co \cat{K} \to \cat{B}$, and a cocone $\beta \co b \to \Delta b_0$ in $\cat{B}$.
  Then we have a functor and natural transformation
  \[
    \begin{tikzcd}[column sep=small,row sep=large]
      \cat{K} \times_{\cat{B}} \cat{E} \ar{dr}[below left]{\subst{P}b} & {} & \ar[dashed]{ll}[above]{\corr{P}{\beta}} \cat{K} \times \cat{E}_{b_0} \ar{dl}{\subst{P}\Delta b_0} \\
      & \cat{E} \ar[phantom]{u}[yshift=0.7em]{\lift{P}\beta}[yshift=-0.3em]{\Rightarrow}
    \end{tikzcd}
  \]
  as follows: $\corr{P}{\beta}$ sends  $(k,e)$ to $(k,\subst{\beta_k}e)$ and $\lift{P}{\beta}$ sends $(k,e)$ to $\overline{\beta_k}e \co \subst{\beta}_ke \to e$.
\end{notation}

\begin{lemma}
  \label{pushforward-colimit}
  Let $P \co \cat{E} \to \cat{B}$ and $Q \co \cat{F} \to \cat{B}$ be Grothendieck fibrations and $V \co (\cat{F},Q) \to (\cat{E},P)$ be a fibered functor over $\cat{E}$  which is itself an isofibration.
  Let a small category $\cat{K}$ and diagram
  \[
    \begin{tikzcd}[row sep=small]
      \cat{K} \ar{dr}[below left]{b} \ar{rr}{d} && \rancob{P}V \ar{dl}{\ranc{P}V} \\
      & \cat{B}
    \end{tikzcd}
  \]
  be given, and suppose that $b \co \cat{K} \to \cat{B}$ admits a colimit that is Van Kampen for $P$ and $Q$.
  Then the colimit of $d \co \cat{K} \to \rancob{P}V$ exists and is preserved by $\ranc{P}V$.
\end{lemma}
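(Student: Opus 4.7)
The plan is to build the colimit cocone $\eta \co d \to \Delta \xi_0$ in $\rancob{P}{V}$ by constructing $\xi_0 \in \rancob{P}{V}$ over $b_0 \defeq \colim b$ pointwise in $e \in \cat{E}_{b_0}$, using the Van Kampen hypotheses and \cref{cartesian-pushforward-morphisms}. For each such $e$, the morphism data of $d$ assembles into a $\cat{K}$-indexed diagram $\tilde e_\bullet$ in $\FibOb{\cat{B}}{\cat{F}}$ lying over $b$, with $\tilde e_k \defeq d_k^\dagger(\subst{\beta_k}e)$ and with the transition along $\kappa \co k \to k'$ obtained by composing the canonical iso $d_k^\dagger(\subst{\beta_k}e) \cong d_k^\dagger(\subst{b_\kappa}\subst{\beta_{k'}}e)$ with the natural iso component of the morphism $d_k \to d_{k'}$ supplied by \cref{cartesian-pushforward-morphisms}, followed by the chosen cartesian lift of $b_\kappa$ to $\tilde e_{k'}$. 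Applying \cref{vk-existence} to $Q$ yields a colimit cocone in $\FibOb{\cat{B}}{\cat{F}}$ over $\beta$ with some apex $\tilde \xi_0(e)$; pushing along $V$ (which preserves $Q$-cartesian morphisms) produces a cocone in $\FibOb{\cat{B}}{\cat{E}}$ whose underlying diagram is the canonical cartesian one over $\beta$ with tip $e$, so by \cref{vk-restrict} applied to $P$ we obtain a unique comparison iso $\iota_e \co V \tilde\xi_0(e) \cong e$ in $\cat{E}_{b_0}$.

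Next I would strictify: since $V$ is an isofibration and is fibered over $\cat{B}$, each $\iota_e$ lifts to a vertical iso $\xi_0^\dagger(e) \cong \tilde\xi_0(e)$ in $\cat{F}_{b_0}$ with $V \xi_0^\dagger(e) = e$ on the nose, which is exactly the strict splitting condition required by objects of $\rancob{P}{V}$. Functoriality of $\tilde\xi_0$ in $e$ follows from a second application of \cref{vk-restrict}: a vertical $f \co e \to e'$ in $\cat{E}_{b_0}$ induces vertical morphisms $d_k^\dagger(\subst{\beta_k}f) \co \tilde e_k \to \tilde{e'}_k$ in $\cat{F}$, which compose with the colimit cocone of $\tilde{e'}_\bullet$ to give a cocone in $\cat{F}$ under $\tilde e_\bullet$; the Van Kampen upgrade of this colimit to $\cat{F}$ then produces a unique vertical mediator $\tilde\xi_0(e) \to \tilde\xi_0(e')$, transported to $\xi_0^\dagger$ via the chosen lifts. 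The transported colimit cocone components $d_k^\dagger(\subst{\beta_k}e) \to \xi_0^\dagger(e)$ remain $Q$-cartesian over $\beta_k$ and natural in $e$, so by \cref{cartesian-pushforward-morphisms} they assemble into morphisms $d_k \to \xi_0$ over $\beta_k$, and naturality in $k$ gives the desired cocone $\eta$.

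For the universal property, a competing cocone $\eta' \co d \to \Delta \zeta$ with $\zeta$ over some $b_0'$ and underlying base cocone $\beta' \co b \to \Delta b_0'$ yields a unique mediator $\gamma \co b_0 \to b_0'$ with $\gamma \beta = \beta'$. For each $e' \in \cat{E}_{b_0'}$, the components of $\eta'$ supply $Q$-cartesian morphisms $d_k^\dagger(\subst{\beta'_k}e') \to \zeta^\dagger(e')$ over $\beta'_k = \gamma \beta_k$; identifying $\subst{\beta'_k}e' \cong \subst{\beta_k}\subst{\gamma}e'$ turns these into a cocone in $\FibOb{\cat{B}}{\cat{F}}$ under the diagram computing $\xi_0^\dagger(\subst{\gamma}e')$, inducing a unique $Q$-cartesian factorization $\xi_0^\dagger(\subst{\gamma}e') \to \zeta^\dagger(e')$ over $\gamma$. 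Naturality in $e'$ (again via the $\cat{F}$-colimit property) packages these into the unique extension $\bar\eta \co \xi_0 \to \zeta$ in $\rancob{P}{V}$ over $\gamma$. Preservation by $\ranc{P}{V}$ is automatic, since $\ranc{P}{V}(\xi_0) = b_0$ and $\ranc{P}{V}\eta = \beta$ by construction.

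I expect the main obstacle to be the tension between the strictness demands of $\rancob{P}{V}$ (strict sections of $V$, and morphism components whose $V$-image is literally the chosen cartesian lift) and the up-to-iso nature of Van Kampen colimits, which produce apices that only split $V$ up to isomorphism. The isofibration hypothesis on $V$ is precisely the tool needed to resolve this, and the delicate bookkeeping is to choose the lifts coherently enough in $e$ that $\xi_0^\dagger$ is a functor and the cocone $\eta$ is natural. A secondary subtlety is to verify that the data provided by \cref{cartesian-pushforward-morphisms} actually assembles $\tilde e_\bullet$ into a $\cat{K}$-indexed diagram in the restricted subcategory $\FibOb{\cat{B}}{\cat{F}}$, so that the Van Kampen hypotheses apply as stated.
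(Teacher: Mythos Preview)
Your proposal is correct and follows essentially the same route as the paper's proof: build the apex section pointwise in $e \in \cat{E}_{b_0}$ by taking colimits in $\FibOb{\cat{B}}{\cat{F}}$ via \cref{vk-existence} for $Q$, use \cref{vk-restrict} for $P$ to see that $V$ carries the apex to something isomorphic to $e$, strictify using the isofibration hypothesis on $V$, and verify the universal property by the same transposition. The one organizational difference is that the paper packages all of your pointwise diagrams $\tilde e_\bullet$ at once as the single functor $d^\dagger \circ \corr{P}{\beta} \co \cat{K} \times \cat{E}_{b_0} \to \cat{F}$ obtained from the adjunction transpose of $d$, which gives the transition maps in $\cat{K}$ and the naturality in $e$ simultaneously rather than requiring your separate assembly via \cref{cartesian-pushforward-morphisms} and your second appeal to \cref{vk-restrict} for functoriality; your more explicit unpacking is not wrong, just longer.
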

\begin{proof}
  By assumption, we have a colimit cocone $\beta \co b \to \Delta b_0$.
  Consider the transpose $d^\dagger \co \cat{K} \times_{\cat{B}} \cat{E} \to \cat{F}$.
  For each $e \in \cat{E}_{b_0}$, the composite
  \[
    \begin{tikzcd}
      \cat{K} \ar{r}{\cat{K} \times e} &[1em] \cat{K} \times \cat{E}_{b_0} \ar{r}{\corr{P}{\beta}} &[1em] \cat{K} \times_{\cat{B}} \cat{E}
    \end{tikzcd}
  \]
  factors through $\cat{K} \times_{\cat{B}} \FibOb{\cat{B}}{\cat{E}}$.
  Because $d$ is a diagram in the cartesian pushforward, its transpose $d^\dagger$ restricts to a functor $\cat{K} \times_{\cat{B}} \FibOb{\cat{B}}{\cat{E}} \to \FibOb{\cat{B}}{\cat{F}}$.
  From the diagram
  \[
    \begin{tikzcd}[row sep=large]
      \cat{K} \ar{dr}[below left]{b} \ar{r}{\corr{P}{\beta}(\cat{K} \times e)} &[5em] \cat{K} \times_{\cat{B}} \FibOb{\cat{B}}{\cat{E}} \ar{r}{d^\dagger} & \FibOb{\cat{B}}{\cat{F}} \ar{dl}{\FibOb{\cat{B}}{Q}} \\
      & \cat{B}
    \end{tikzcd}
  \]
  it follows by \ref{vk-existence} that the family $d^\dagger \circ \corr{P}{\beta} \co \cat{K} \times \cat{E}_{b_0} \to \cat{F}$ admits colimits pointwise in $e \in \cat{E}_{b_0}$, defining a family of colimits $f_0 \co \cat{E}_{b_0} \to \FibOb{\cat{B}}{\cat{F}}$ and colimit cocones $\phi \co d^\dagger \circ \corr{P}{\beta} \to f_0\projr$.
  Since $V$ is a fibred functor, the images of these cocones by $V$ land in $\FibOb{\cat{B}}{\cat{E}}$.
  By \ref{vk-restrict}, then, they are also colimit cocones; since $Vd^\dagger \circ \corr{P}{\beta} = \projr \circ \corr{P}{\beta} \co \cat{K} \times \cat{E}_{b_0} \to \cat{E}$, this means that $Vf_0 \cong \subst{P}b_0 \co \cat{E}_{b_0} \to \cat{E}$.
  Since $V$ is an isofibration, we can assume without loss of generality that $Vf_0 = \subst{P}b_0$ and that $V\phi = \lift{P}{\beta}$.
  In this case, $f_0$ and $\phi$ correspond (using the description of \cref{pushforward-morphisms}) to an object $f_0^\dagger \in \rancob{P}{V}$ and cocone $\phi^{\ddagger} \co d \to \Delta f_0^\dagger$.
  
  It remains to check that $\phi^{\ddagger}$ is a colimit cocone.
  Let $\xi \co d \to \Delta x$ be a cocone under $d$.
  Write $b_x = \ranc{P}V(x) \in \cat{B}$ and $\beta' = \ranc{P}V \circ \xi \co b \to \Delta b_x$; we have a unique morphism $[\beta'] \co b_0 \to b_x$ with $\Delta [\beta'] \circ \beta' = \beta$.
  By \cref{pushforward-morphisms}, $\xi$ corresponds to a transformation $\xi^\ddagger \co d^\dagger \circ \corr{P}{\beta'} \to x^\dagger\projr$ with $V\xi^\ddagger = \lift{P}{\beta'}$.
  By the universal properties of $\phi(-,\subst{[\beta']}e)$ for $e \in \cat{E}_{b_x}$, $\xi^\ddagger$ induces a natural transformation $f_0\subst{[\beta']} \to x^\dagger$, valued in $Q$-cartesian morphisms, which transposes to the desired morphism $f_0^\dagger \to x$ in $\rancob{P}{V}$ over $[\beta']$.
\end{proof}

\subsubsection{Saturation for extension operations}

\begin{definition}
  Given a Grothendieck fibration $P \co \cat{F} \to \cat{E}$, write $\FullCartArr{\cat{F}} \hookrightarrow \Arr{\cat{F}}$ for the full subcategory of $\Arr{\cat{F}}$ consisting of the $P$-cartesian morphisms and $\FullCartArr{P} \co \FullCartArr{\cat{F}} \to \Arr{\cat{E}}$ for the restriction of $P$ to this category.
\end{definition}

\begin{proposition}
  If $P \co \cat{F} \to \cat{E}$ is a Grothendieck fibration, then $\FullCartArr{P} \co \FullCartArr{\cat{F}} \to \Arr{\cat{E}}$ is also a Grothendieck fibration.
\end{proposition}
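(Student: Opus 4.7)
The plan is to build cartesian lifts for $\FullCartArr{P}$ directly from cartesian lifts provided by $P$ itself, applied separately at the domain and codomain of the arrow. Fix an object $f \co X \to Y$ of $\FullCartArr{\cat{F}}$ (so $f$ is $P$-cartesian) and a morphism $(h,k) \co u \to Pf$ in $\Arr{\cat{E}}$, with $h \co \dom u \to PX$, $k \co \cod u \to PY$, and $Pf \circ h = k \circ u$.

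First I would choose $P$-cartesian lifts $\bar h \co \subst{h}X \to X$ of $h$ and $\bar k \co \subst{k}Y \to Y$ of $k$. The cartesianness of $\bar k$ together with the fact that $f \bar h$ lies over $Pf \circ h = k \circ u$ yields a unique $g \co \subst{h}X \to \subst{k}Y$ over $u$ with $\bar k g = f \bar h$. Since $f$ and $\bar h$ are both $P$-cartesian, so is their composite $\bar k g = f \bar h$; combined with the cartesianness of $\bar k$, the standard cancellation property for cartesian morphisms (composite cartesian, second factor cartesian $\Rightarrow$ first factor cartesian) forces $g$ to be $P$-cartesian as well. Thus $g$ is an object of $\FullCartArr{\cat{F}}$, and by construction $(\bar h, \bar k) \co g \to f$ is a morphism of $\FullCartArr{\cat{F}}$ lifting $(h,k)$.

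The remaining task is to verify this lift is $\FullCartArr{P}$-cartesian. Given another $P$-cartesian $g' \co X'' \to Y''$, a morphism $(\phi,\psi) \co g' \to f$ of $\FullCartArr{\cat{F}}$, and a morphism $(\alpha,\beta) \co Pg' \to u$ in $\Arr{\cat{E}}$ with $h\alpha = P\phi$ and $k\beta = P\psi$, cartesianness of $\bar h$ and $\bar k$ supplies unique $\alpha' \co X'' \to \subst{h}X$ over $\alpha$ with $\bar h \alpha' = \phi$ and $\beta' \co Y'' \to \subst{k}Y$ over $\beta$ with $\bar k \beta' = \psi$. I would then check that $(\alpha',\beta')$ is an arrow morphism, i.e., $g \alpha' = \beta' g'$: both composites lie over $u \alpha = \beta \circ Pg'$ and become $f \phi = \psi g'$ after post-composition with $\bar k$, so the uniqueness part of the universal property of $\bar k$ forces them to agree. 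Uniqueness of $(\alpha',\beta')$ itself is immediate from the uniqueness in the defining cartesian lifts.

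The whole argument is elementary cartesian-morphism bookkeeping, so I do not anticipate a serious obstacle. The only mildly delicate point is keeping track of which morphism is cartesian with respect to which of the two fibrations ($P$ versus $\FullCartArr{P}$) at each step of the verification.
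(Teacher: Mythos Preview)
Your argument is correct and is exactly the approach the paper takes: the paper's proof is a one-liner invoking the cancellation property for cartesian morphisms, and you have unpacked the standard verification that this one line stands for.
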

\begin{proof}
  By the cancellation properties of cartesian morphisms (if $g$ is cartesian and $gf$ is cartesian, then $f$ is cartesian).
\end{proof}

\begin{definition}[Category of extension operations]
  \label{extension-vertical-morphisms}
  Given a Grothendieck fibration $P \co \cat{F} \to \cat{E}$, define $\VertArr{U_P} \co \Ext{P} \to \Arr{\cat{E}}$ to be the cartesian pushforward
  \[
    \begin{tikzcd}
      \FullCartArr{\cat{F}} \ar{d}[left]{\leibpullapp{\dom}(P)} & \rancob{\projl}{\leibpullapp{\dom}(P)} \ar[dashed]{d} & \\
      \Arr{\cat{E}} \times_{\cat{E}} \cat{F} \ar{r}[below]{\projl} & \Arr{\cat{E}} 
    \end{tikzcd}
  \]
  of $\leibpullapp{\dom}(P) \defeq \pair{\FullCartArr{P}}{\dom} \co \FullCartArr{\cat{F}} \to \Arr{\cat{E}} \times_{\cat{E}} \cat{F}$, seen as a fibred functor between the Grothendieck fibrations $\FullCartArr{P} \co \FullCartArr{\cat{F}} \to \Arr{\cat{E}}$ and $\projl \co \Arr{\cat{E}} \times_{\cat{E}} \cat{F} \to \Arr{\cat{E}}$.
\end{definition}

An object of $\Ext{P}$ over a morphism $f \co A \to B$ in $\Arr{\cat{E}}$ thus corresponds to a section $\bm{f} \co \cat{F}_A \to \FullCartArr{\cat{F}}_f$ of the domain projection $\dom \co \FullCartArr{\cat{F}}_f \to \cat{F}_A$, \ie, a functor that extends any $\cat{F}$-structure over $A$ to an $\cat{F}$-structure over $B$ together with a $P$-cartesian morphism over $f$ from the input to the output structure.

\begin{definition}[Double category of extension operations]
  \label{extension-double-category}
  Let $P \co \cat{F} \to \cat{E}$ be a Grothendieck fibration.
  We extend $\VertArr{U_P} \co \Ext{P} \to \Arr{\cat{E}}$ of \cref{extension-vertical-morphisms} to a notion of composable structure $U_P \co \DExt{P} \to \Sq{\cat{E}}$ by taking the vertical identity functor $\idv_{(-)} \co \cat{E} \to \Ext{P}$ to be the transpose of the diagram
    \[
      \begin{tikzcd}
        \cat{F} \ar{dr}[below left]{\pair{\id_{P(-)}}{\Id_{\cat{F}}}} \ar{rr}{\id_{(-)}} & & \FullCartArr{\cat{F}} \ar{dl}{\leibpullapp{\dom}(P)} \\
        & \Arr{\cat{E}} \times_{\cat{E}} \cat{F}
      \end{tikzcd}
    \]
  and vertical composition $\star \co \Ext{P} \times_{\cat{E}} \Ext{P} \to \Ext{P}$ to be the transpose of the diagram
    \[
      \begin{tikzcd}[row sep=large]
        \Ext{P} \times_{\cat{E}} \Ext{P} \times_{\cat{E}} \cat{F} \ar{r}{\Ext{P} \times_{\cat{E}} \epsilon} \ar{dr}[below left]{\VertArr{U_P} \times_{\cat{E}} \VertArr{U_P} \times_{\cat{E}} \cat{F}} &[2em] \Ext{P} \times_{\cat{E}} \FullCartArr{\cat{F}} \ar{rr}{\pair{\epsilon(\Ext{P} \times_{\cat{E}} \dom)}{\projr}} && \FullCartArr{\cat{F}} \times_{\cat{F}} \FullCartArr{\cat{F}} \ar{r}{\circ} &[-1em] \FullCartArr{\cat{F}} \ar{ddll}{\leibpullapp{\dom}(P)} \rlap{.} \\
        & \Arr{\cat{E}} \times_{\cat{E}} \Arr{\cat{E}} \times_{\cat{E}} \cat{F} \ar{dr}[below left]{(\circ) \times_{\cat{E}} \cat{F}}  \\[-1em]
        & & \Arr{\cat{E}} \times_{\cat{E}} \cat{F}
      \end{tikzcd}
    \]
    Conservativity of $\VertArr{U_P}$ follows from the description of morphisms in $\Ext{P}$ in \cref{cartesian-pushforward-morphisms}.
\end{definition}

\begin{proposition}
  \label{extension-dcat-left-connected}
  $U_P \co \DExt{P} \to \Sq{\cat{E}}$ is left-connected.
\end{proposition}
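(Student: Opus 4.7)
The plan is to unfold the left-connectedness condition in the sense used earlier in the paper (cf.\ \cref{preimage-left-connected}), which amounts to showing that certain categories of vertical decompositions of a morphism in $\DExt{P}$ along a given factorization in $\cat{E}$ are connected. Using the explicit description of morphisms in a cartesian pushforward from \cref{cartesian-pushforward-morphisms}, I would first spell out what such decompositions are: a vertical morphism of $\DExt{P}$ over $f \co A \to B$ is a section $\bm{f} \co \cat{F}_A \to \FullCartArr{\cat{F}}_f$, and a morphism between two such sections is pinned down by a single invertible natural transformation of reindexing functors satisfying a compatibility condition with $P$.

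Next, given a factorization $f = g \circ h$ through some object $C$, I would unpack a decomposition of $\bm{f}$ as a pair $(\bm{g}, \bm{h})$ of extension operations whose vertical composite (defined via composition of cartesian morphisms) recovers $\bm{f}$. Pointwise in $X \in \cat{F}_A$, such a decomposition amounts to a factorization of the $P$-cartesian morphism $\bm{f}(X)$ over $f$ as a composite of $P$-cartesian morphisms over $h$ and $g$ respectively. The key observation is that, by the universal property of cartesian lifts, any such pointwise factorization is unique up to a canonical vertical isomorphism in $\cat{F}_C$. These pointwise canonical isomorphisms then assemble by naturality of reindexing into a natural isomorphism between the intermediate parts $\bm{h}$ and $\bm{h}'$ of any two decompositions.

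The main obstacle will be verifying that this canonical comparison is in fact a morphism in the relevant category of decompositions. Concretely, one must check that the connecting natural isomorphism has image an identity under $P$ and is compatible with $\bm{f}$ after post-composing with the $\bm{g}$-components, in the sense required by \cref{cartesian-pushforward-morphisms}. The former is automatic from cartesian vertical isomorphisms being identities on the base; the latter follows because both decompositions have the same vertical composite $\bm{f}$, so the canonical comparison is forced by the universal property of the cartesian lifts over $g$. Once these compatibility checks are in place, the decomposition category is not merely connected but a contractible groupoid, and left-connectedness follows.
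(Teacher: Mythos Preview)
Your unfolding of ``left-connected'' does not match the notion used in the paper. In the paper's sense, $U_P \co \DExt{P} \to \Sq{\cat{E}}$ is left-connected when, for every commuting square in $\cat{E}$ with left edge an identity and every lift $\bm{g}$ of its right edge to a vertical morphism of $\DExt{P}$, there is a \emph{unique} square in $\DExt{P}$ over it with left edge the vertical identity $\idv_A$ and right edge $\bm{g}$. The paper's proof checks exactly this: by \cref{cartesian-pushforward-morphisms} such a square is a natural isomorphism $\theta$ landing in $\FullCartArr{\cat{F}}_{\id_A}$ and restricting to the identity on domains, and since cartesian morphisms over an identity are isomorphisms, any two objects of $\FullCartArr{\cat{F}}_{\id_A}$ with the same domain are related by a unique domain-fixing isomorphism, so $\theta$ exists and is unique.

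What you argue instead is that, for a vertical morphism $\bm{f}$ over $f$ and a factorization $f = g \circ h$ in $\cat{E}$, the category of vertical decompositions $\bm{f} \cong \bm{g} \star \bm{h}$ is a contractible groupoid. That is a statement about factorizations of a single vertical arrow, not about fillers of squares whose left side is a vertical identity; the sentence ``which amounts to showing that certain categories of vertical decompositions \ldots\ are connected'' is precisely where the argument departs from what is required. Even granting that your decomposition claim is true for $\DExt{P}$ (and your sketch for it is plausible), it does not yield left-connectedness in the relevant sense. The correct argument is much shorter: drop the decomposition analysis and verify uniqueness of the filling square directly as above.
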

\begin{proof}
  A square with boundary of the form
  \[
    \begin{tikzcd}
      A \ar[verto]{d}[left]{\idv_A} \ar{r}{h} & B \ar[verto]{d}{\bm{g}} \\
      A \ar{r}[below]{k} & C
    \end{tikzcd}
  \]
  in $\DExt{P}$ consists by \cref{cartesian-pushforward-morphisms} of a natural isomorphism $\theta$ such that
  \[
    \begin{tikzcd}[sep=large]
      \cat{F}_{A} \ar[phantom]{dr}{\theta} \ar{r}{\id_{(-)}} & \FullCartArr{\cat{F}}_{\id_A} \ar[phantomcenter]{dr}{\cong} \ar{r}{\dom} & \cat{F}_A \\
      \cat{F}_{B} \ar{u}{\subst{h}} \ar{r}[below]{\bm{g}^\dagger} & \FullCartArr{\cat{F}}_g \ar{u}[description]{\subst{(h,k)}} \ar{r}[below]{\dom} & \cat{F}_B \ar{u}[right]{\subst{h}}
    \end{tikzcd}
    \quad
    =
    \quad
    \begin{tikzcd}[sep=large]
      \cat{F}_A \ar[equals]{r} & \cat{F}_A \\
      \cat{F}_B \ar{u}{\subst{h}} \ar[equals]{r} & \cat{F}_B \ar{u}[right]{\subst{h}} \rlap{.}
    \end{tikzcd}
  \]
  Because any vertical cartesian morphism is an isomorphism, any pair of objects of $\FullCartArr{\cat{F}}_{\id_A}$ are related by a unique domain-fixing isomorphism.
  Thus there is a unique such square provided $gh = k$.
\end{proof}

\begin{proposition}
  \label{extension-dcat-retracts}
  $U_P \co \DExt{P} \to \Sq{\cat{E}}$ admits a codomain retract lifting operator.
\end{proposition}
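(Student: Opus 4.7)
The plan is to construct the codomain retract lifting operator explicitly, using the characterization of morphisms in $\Ext{P}$ from \cref{cartesian-pushforward-morphisms} and the cartesian lifts in the Grothendieck fibration $P$.

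A codomain retract presentation exhibits $f' \co A \to B'$ as a retract of $f \co A \to B$ via $s \co B' \to B$ and $r \co B \to B'$ with $rs = \id_{B'}$, $sf' = f$, and $rf = f'$. Given a vertical morphism $\bm{f} \in \DExt{P}$ over $f$, corresponding under \cref{cartesian-pushforward-morphisms} to an extension operation $\bm{f}^\dagger \co \cat{F}_A \to (\FullCartArr{\cat{F}})_f$, I would produce the lifted $\bm{f'} \in \DExt{P}$ over $f'$ as follows. For each $X \in \cat{F}_A$, write $\phi_X \co X \to Y_X$ for the cartesian morphism over $f$ assigned by $\bm{f}^\dagger$. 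Since $f = sf'$, the chosen cartesian lift $\overline{s} Y_X \co \subst{s} Y_X \to Y_X$ over $s$ uniquely factors $\phi_X$ as $\overline{s} Y_X \circ \psi_X$ with $\psi_X \co X \to \subst{s} Y_X$ over $f'$; cartesian cancellation ensures $\psi_X$ is itself cartesian. I set $\bm{f'}^\dagger(X) \defeq \psi_X$, and functoriality in $X$ follows from the uniqueness in cartesian factorizations.

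The retract witness in $\DExt{P}$ is then assembled from the families $(\overline{s} Y_X)_{X \in \cat{F}_A}$ and $(\overline{r} \subst{s} Y_X)_{X \in \cat{F}_A}$ of cartesian lifts, which by \cref{cartesian-pushforward-morphisms} arrange into squares in $\DExt{P}$ over the retract maps of $f$. The equation $rs = \id_{B'}$ then yields the required retract identity via the canonical natural isomorphism $\subst{r}\subst{s} \cong \Id_{\cat{F}_{B'}}$.

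I expect the main obstacle to be bookkeeping rather than mathematical depth: verifying the triangle identity of \cref{cartesian-pushforward-morphisms} for each constructed square, and checking naturality in $\bm{f}$ and in the retract datum. Each such verification reduces to the fact that vertical $P$-cartesian morphisms are isomorphisms, together with the standard functoriality of cloven cartesian lifts.
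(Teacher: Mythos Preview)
Your core construction coincides with the paper's: factoring $\phi_X$ through the cartesian lift $\overline{s}Y_X$ to obtain the cartesian $\psi_X$ over $f'$ is precisely what the paper means by ``post-composing with $\subst{(\id_A,s)}$'', and the paper likewise appeals to uniqueness of cartesian factorizations for functoriality.

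One point of divergence: the paper's proof stops after producing $\bm{f'}$ and noting functoriality in the retract datum; it does \emph{not} lift the retract squares themselves to $\DExt{P}$. Your additional step of assembling a retract witness is therefore not required by the notion of codomain retract lifting operator in play here. This matters because your sketch of the retraction square is not quite right: to obtain a morphism $\bm{f} \to \bm{f'}$ in $\Ext{P}$ over $(\id_A,r)$ you would need, for each $X$, a $P$-cartesian map $Y_X \to \subst{s}Y_X$ over $r$, but the cartesian lift $\overline{r}(\subst{s}Y_X)$ has domain $\subst{r}\subst{s}Y_X \cong \subst{(sr)}Y_X$, which need not be isomorphic to $Y_X$ since $sr$ is only an idempotent on $B$, not the identity. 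The section square over $(\id_A,s)$ that you describe is fine, but the retraction square generally does not exist in $\Ext{P}$. Since the paper's argument shows this lift is unnecessary, your proposal remains correct once that paragraph is dropped.
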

\begin{proof}
  Suppose we have a codomain retract diagram
  \[
    \begin{tikzcd}
      & A \ar{dl}[above left]{f'} \ar{d}{f} \ar{dr}{f'} \\
      B' \ar{r}[below]{s} & B \ar{r}[below]{r} & B'
    \end{tikzcd}
  \]
  and a vertical morphism over $f$, which is to say a section $\xi \co \cat{F}_A \to \FullCartArr{\cat{F}}_f$ of the domain projection.
  Post-composing with $\subst{(\id_A,s)} \co \FullCartArr{\cat{F}}_f \to \FullCartArr{\cat{F}}_{f'}$ yields a vertical morphism over $f'$.\footnote{Here we abusively write $\subst{(\id_A,s)}$ for the functor which performs the canonical lift along $s$ in the codomain and is the identity in the domain, though we have not assumed that $\subst{\id_A}$ is the identity.}
  This operation can clearly be made functorial in the retract diagram.
\end{proof}

\begin{lemma}
  \label{pullback-van-kampen}
  Let $P \co \cat{E} \to \cat{B}$ be a Grothendieck fibration, $F \co \cat{C} \to \cat{B}$ be a functor, and $c \co \cat{K} \to \cat{C}$ be a diagram.
  If $\psi \co c \to \Delta c_0$ is a colimit cocone such that $F\psi$ is a Van Kampen colimit cocone for $P$, then $\psi$ is Van Kampen for $\subst{F}P$.
\end{lemma}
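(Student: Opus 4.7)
The plan is to verify the two conditions of \cref{van-kampen} for $\psi \co c \to \Delta c_0$ with respect to the pullback fibration $\subst{F}P \co \subst{F}\cat{E} \to \cat{C}$ by reducing each to the corresponding property of $F\psi$ for $P$. The key structural observation is that a morphism in $\subst{F}\cat{E}$ is $\subst{F}P$-cartesian exactly when its $\cat{E}$-component is $P$-cartesian, so the canonical projection restricts to a functor $G \co \FibOb{\cat{C}}{\subst{F}\cat{E}} \to \FibOb{\cat{B}}{\cat{E}}$ lying over $F$. Any diagram $e \co \cat{K} \to \FibOb{\cat{C}}{\subst{F}\cat{E}}$ over $c$ thus pushes forward to a diagram $Ge$ over $Fc$, to which we can apply the hypothesis on $F\psi$.

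For \ref{vk-existence}, I would take a colimit cocone $\mu \co Ge \to \Delta e_0$ in $\FibOb{\cat{B}}{\cat{E}}$ over $F\psi$, available by \ref{vk-existence} for $F\psi$, and lift it back to $\subst{F}\cat{E}$ by pairing it componentwise with $\psi$. To verify the universal property of the resulting cocone in $\FibOb{\cat{C}}{\subst{F}\cat{E}}$, use the colimit property of $\psi$ in $\cat{C}$ to extract a unique mediating morphism $u \co c_0 \to c_0'$ in the base, restrict the competing $\cat{E}$-side cocone along the chosen cartesian lift over $Fu$, and invoke universality of $\mu$ to produce a vertical cartesian factorization; composing with the cartesian lift then yields the desired morphism in $\FibOb{\cat{C}}{\subst{F}\cat{E}}$ over $u$.

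For \ref{vk-restrict}, let $\eta$ be any cocone in $\FibOb{\cat{C}}{\subst{F}\cat{E}}$ over $\psi$ and consider a competing cocone $\eta' \co e \to \Delta(c'_0, e'_0)$ in $\subst{F}\cat{E}$ (not assumed cartesian). Then $G\eta$ is a cocone in $\FibOb{\cat{B}}{\cat{E}}$ over $F\psi$, and so is a colimit in $\cat{E}$ by \ref{vk-restrict} for $F\psi$. Extract a mediating map $w \co c_0 \to c'_0$ in $\cat{C}$ from $\psi$ being a colimit and a mediating map $v \co e_0 \to e'_0$ in $\cat{E}$ from $G\eta$ being a colimit. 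Assembling them into a morphism of $\subst{F}\cat{E}$ requires the compatibility $Pv = Fw$, and this is the main technical point.

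I expect this compatibility to follow from the uniqueness clause of the universal property of the colimit $F\psi$ in $\cat{B}$ (which holds because $F\psi$, being Van Kampen, is in particular a colimit there): both $Pv$ and $Fw$ factor the base cocone induced by $\eta'$ through $F\psi$, so they must coincide. Uniqueness of the assembled morphism $(w,v)$ then reduces componentwise to uniqueness of $w$ and $v$ in their respective universal properties.
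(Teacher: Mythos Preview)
Your argument is correct and unpacks what the paper records in one line: the isomorphism $\FibOb{\cat{C}}{(\cat{C} \times_{\cat{B}} \cat{E})} \cong \cat{C} \times_{\cat{B}} \FibOb{\cat{B}}{\cat{E}}$ (your ``key structural observation''), from which both conditions follow by the componentwise colimit argument you describe. Your use of the fact that $F\psi$ is itself a colimit cocone to force $Pv = Fw$ is exactly the compatibility check implicit in the paper's ``straightforward consequence.''
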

\begin{proof}
  This is a straightforward consequence of the fact that $\FibOb{\cat{C}}{(\cat{C} \times_{\cat{B}} \cat{E})} \cong \cat{C} \times_{\cat{B}} \FibOb{\cat{B}}{\cat{E}}$.
\end{proof}

\begin{lemma}
  \label{fullcartarr-van-kampen}
  Let $P \co \cat{E} \to \cat{B}$ be a Grothendieck fibration and $d \co \cat{K} \to \Arr{\cat{B}}$ be a diagram.
  If $\delta \co d \to \Delta d_0$ is a colimit cocone such that ${\dom} \circ \delta$ and ${\cod} \circ \delta$ are Van Kampen for $P$, then $\delta$ is Van Kampen for $\FullCartArr{P} \co \FullCartArr{\cat{E}} \to \Arr{\cat{B}}$.
\end{lemma}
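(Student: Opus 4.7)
The plan is to verify conditions \ref{vk-existence} and \ref{vk-restrict} of Van Kampen for $\FullCartArr{P}$ directly, reducing each to the corresponding Van Kampen condition for $P$ applied to $\dom \circ \delta$ and $\cod \circ \delta$. The key preliminary observation is that an object of $\FibOb{\Arr{\cat{B}}}{\FullCartArr{\cat{E}}}$ is simply a $P$-cartesian morphism of $\cat{E}$, while a $\FullCartArr{P}$-cartesian square is one all of whose sides are $P$-cartesian in $\cat{E}$; this again uses the cancellation properties of cartesian morphisms noted in the preceding proposition.

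Condition \ref{vk-restrict} is straightforward: given a cocone $\eta \co e \to \Delta \phi_0$ in $\FibOb{\Arr{\cat{B}}}{\FullCartArr{\cat{E}}}$ over $\delta$, its projections $\dom \circ \eta$ and $\cod \circ \eta$ are cocones in $\FibOb{\cat{B}}{\cat{E}}$ over $\dom \circ \delta$ and $\cod \circ \delta$ respectively, and hence are colimit cocones in $\cat{E}$ by the hypothesis that both are Van Kampen for $P$. Since colimits in $\Arr{\cat{E}}$ are computed component-wise, $\eta$ is a colimit cocone in $\Arr{\cat{E}}$; its apex $\phi_0$ lies in the full subcategory $\FullCartArr{\cat{E}} \hookrightarrow \Arr{\cat{E}}$, so $\eta$ is also a colimit cocone in $\FullCartArr{\cat{E}}$.

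For condition \ref{vk-existence}, given a diagram $e \co \cat{K} \to \FibOb{\Arr{\cat{B}}}{\FullCartArr{\cat{E}}}$ over $d$ with $e_k = (\phi_k \co X_k \to Y_k)$, I would first apply \ref{vk-existence} for $\cod \circ \delta$ to $\cod \circ e$ to obtain a colimit cocone in $\FibOb{\cat{B}}{\cat{E}}$ with cartesian legs $v'_k \co Y_k \to Y_0$, and then take the chosen cartesian lift $\phi_0 \co X_0 \to Y_0$ of $d_0$ at $Y_0$. The universal property of $\phi_0$ yields, for each $k$, a unique $u'_k \co X_k \to X_0$ over $\dom \delta_k$ with $\phi_0 u'_k = v'_k \phi_k$; by the cancellation property $u'_k$ is $P$-cartesian, so $(u'_k,v'_k)$ defines a $\FullCartArr{P}$-cartesian morphism $\phi_k \to \phi_0$. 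To see that the resulting cocone is universal, any competing cocone $\eta' \co e \to \Delta \psi$ over some $\delta' \co d \to \Delta g$ in $\Arr{\cat{B}}$ factors through $\delta$ by a unique $h \co d_0 \to g$; the colimit property of $\cod \circ e \to \Delta Y_0$ in $\FibOb{\cat{B}}{\cat{E}}$ produces a cartesian $Y_0 \to \cod \psi$ over $\cod h$, which cartesianness of $\psi$ lifts to the required $\FullCartArr{P}$-cartesian morphism $\phi_0 \to \psi$.

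The main obstacle is condition \ref{vk-existence}, where we must assemble the colimit cocone, verify its legs are $\FullCartArr{P}$-cartesian (both components cartesian in $\cat{E}$), and check the universal property against cocones whose apex may sit over an arbitrary object of $\Arr{\cat{B}}$ rather than $d_0$. The cancellation property of cartesian morphisms, invoked at several points, is the essential technical device throughout.
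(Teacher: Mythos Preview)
Your argument is correct and follows the same strategy as the paper: reduce both Van Kampen conditions for $\FullCartArr{P}$ to the componentwise hypotheses via the pointwise computation of colimits in arrow categories, with cancellation for cartesian morphisms doing the bookkeeping. The paper's (two-sentence) proof phrases condition~\ref{vk-existence} slightly differently---it takes both component colimits and argues that the induced map $\colim(\dom \circ e) \to \colim(\cod \circ e)$ is $P$-cartesian using that $\colim(\dom \circ e)$ is already a colimit in $\FibOb{\cat{B}}{\cat{E}}$---whereas you take only the codomain colimit and recover the domain via a chosen cartesian lift; the two constructions are canonically isomorphic.
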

\begin{proof}
  Since colimits in arrow categories are computed pointwise, it suffices to check that for any diagram $e \co \cat{K} \to \FullCartArr{\cat{E}}$ over $d$, the induced morphism $\colim ({\dom} \circ e) \to \colim ({\cod} \circ e)$ in $\cat{E}$ is $P$-cartesian.
  This is the case because $\colim ({\dom} \circ e)$ is a colimit in $\FibOb{\cat{B}}{\cat{E}}$, because ${\dom} \circ \delta$ is Van Kampen for $P$.
\end{proof}

By the saturation theorem, we conclude:

\begin{theorem}
  \label{lift-extension-operation}
  
\end{theorem}
\begin{proof}
  By \cref{soa-copointed-coalgebras-functor}.
  \Cref{extension-dcat-left-connected} gives left-connectedness and \cref{extension-dcat-retracts} provides a codomain retract lifting operator.
  For colimits, \cref{pullback-van-kampen,fullcartarr-van-kampen} imply that colimits of $(1+\alpha)$-chains in $\CodClass{\cat{E}}{\class{M}}$ for $\alpha \preceq \kappa$ and cobase changes of maps in $\CodClass{\cat{E}}{\class{M}}$ are Van Kampen for $\FullCartArr{P} \co \FullCartArr{F} \to \Arr{\cat{E}}$ and $\projl \co \Arr{\cat{E}} \times_{\cat{E}} \cat{F} \to \Arr{\cat{E}}$.
  Thus \cref{pushforward-colimit} implies that $\DCodClass{\DExt{P}}{\class{M}}$ is $(\kappa,\class{M})$-cellular.
\end{proof}

Finally, we return to the motivating example of extension of fibrations along trivial cofibrations.
When $(\comonad{L},\monad{R})$ is an \textsc{awfs} on a category $\cat{E}$ with pullbacks, the composite $\cod U_{\monad{R}} \co \Alg{\monad{R}} \to \cat{E}$ is a Grothendieck fibration \cite[Proposition 8]{bourke-garner:16}.
When the \textsc{awfs} is generated by a diagram of maps with tiny codomain, this fibration inherits the Van Kampen colimits of the underlying category:

\begin{lemma}
  \label{awfs-algebras-van-kampen}
  Let $u \co \cat{J} \to \Arr{\cat{E}}$ be a diagram of arrows in a category with pullbacks and let $(\comonad{L},\monad{R})$ be an \textsc{awfs} on $\cat{E}$ cofibrantly generated by $u$.
  Suppose that $\cod u$ is levelwise tiny, in the sense that $\Hom{\cat{E}}{\cod u_i}{-}$ preserves all colimits for all $i \in \cat{J}$.
  Given a diagram $Y \co \cat{K} \to \cat{E}$ and colimit cocone $\upsilon \co Y \to \Delta Y_0$, if $\upsilon$ is Van Kampen for $\cod \co \Arr{\cat{E}} \to \cat{E}$, then $\upsilon$ is also Van Kampen for $\cod U_{\monad{R}} \co \Alg{\monad{R}} \to \cat{E}$.
\end{lemma}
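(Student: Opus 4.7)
The plan is to verify the two conditions of \cref{van-kampen} for $\cod U_{\monad{R}} \co \Alg{\monad{R}} \to \cat{E}$, exploiting the standard presentation of an $\monad{R}$-algebra structure on a map $f$ as a coherent choice of diagonal fillers for commutative squares from each $u_j$ to $f$, and transporting such choices through the colimit $\upsilon$ using tininess of $\cod u$.

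For existence \cref{vk-existence}, suppose a diagram $(f_k \co X_k \to Y_k, \alpha_k)_{k \in \cat{K}}$ in $\FibOb{\cat{E}}{\Alg{\monad{R}}}$ over $Y$ is given. Since $\upsilon$ is Van Kampen for $\cod$, the underlying diagram in $\Arr{\cat{E}}$ admits a colimit cocone into an arrow $f_0 \co X_0 \to Y_0$ whose components are pullback squares. I would equip $f_0$ with an algebra structure as follows: given $j \in \cat{J}$ and a square $s \co u_j \to f_0$, the lower edge $\cod u_j \to Y_0$ factors through some $\upsilon_k \co Y_k \to Y_0$ by tininess of $\cod u_j$; since $f_k \to f_0$ is a pullback, the whole square $s$ then factors through $f_k$, and applying $\alpha_k$ produces a filler which I post-compose with $X_k \to X_0$ to define $\alpha_0$ on $s$. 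Well-definedness---that the filler does not depend on the factorization---follows from the isomorphisms $\Hom{\cat{E}}{\cod u_j}{Y_0} \cong \colim_k \Hom{\cat{E}}{\cod u_j}{Y_k}$ and $\Hom{\cat{E}}{\cod u_j}{X_0} \cong \colim_k \Hom{\cat{E}}{\cod u_j}{X_k}$ given by tininess, combined with the coherence of the $\alpha_k$ along the cartesian morphisms in the original diagram. The algebra axioms and naturality in $j$ reduce pointwise to those for the $\alpha_k$, and the resulting cocone into $(f_0, \alpha_0)$ lies in $\FibOb{\cat{E}}{\Alg{\monad{R}}}$ by construction.

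For \cref{vk-restrict}, let $\eta \co (f_k, \alpha_k) \to (f_0', \alpha_0')$ be a cocone in $\FibOb{\cat{E}}{\Alg{\monad{R}}}$ over $\upsilon$. Its underlying cocone in $\Arr{\cat{E}}$ is already a colimit by Van Kampen-ness of $\upsilon$ for $\cod$, so any competing cocone $\xi \co (f_k, \alpha_k) \to (g_0, \beta_0)$ in $\Alg{\monad{R}}$ induces a unique morphism $f_0' \to g_0$ in $\Arr{\cat{E}}$; that this underlying morphism is a morphism of $\monad{R}$-algebras reduces---by the same tininess argument---to the fact that each $\xi_k$ is. The main obstacle is the well-definedness check in the existence step: a given square $u_j \to f_0$ may factor through many different stages $Y_k$, and ensuring that all such factorizations yield the same filler in $X_0$ requires simultaneously using tininess of $\cod u_j$ and the coherence of the $\alpha_k$ along the pullback squares of the original diagram. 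The rest is bookkeeping.
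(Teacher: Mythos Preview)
Your proposal is correct and takes essentially the same approach as the paper: both use the description of $\monad{R}$-algebra structures as coherent choices of fillers against the $u_j$ and exploit tininess of $\cod u_j$ to transport these through the colimit. The paper packages the argument slightly more abstractly---it shows that $\Hom{\cat{E}}{\cod u_{(-)}}{\dom \phi}$ and $\Hom{\Arr{\cat{E}}}{u_{(-)}}{\phi}$ are colimit cocones of presheaves on $\cat{J}$, so that existence, well-definedness, and naturality in $j$ of the glued section follow in one stroke---but the content is the same as your element-level factor-through-a-stage argument.
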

\begin{proof}
  By definition of cofibrantly generated \textsc{awfs}, an $\monad{R}$-algebra structure on $f \in \Arr{\cat{E}}$ can be described as a section of a map
  \[
    \begin{tikzcd}[sep=large]
      \Hom{\cat{E}}{\cod u_{(-)}}{\dom f} \ar{r}{\leibpullapp{\conjugate{u}}(f)} & \Hom{\Arr{\cat{E}}}{u_{(-)}}{f}
    \end{tikzcd}
  \]
  that at stage $i \in \cat{J}$ sends a solution to a lifting problem $u_i \to f$ to the underlying lifting problem; see the proof of \cite[Proposition 16]{bourke-garner:16}.

  Let $\bm{f} \co \cat{K} \to \FibOb{\cat{E}}{(\Alg{\monad{R}})}$ be a diagram over $Y$.
  Then the composite $f \defeq U_{\monad{R}} \bm{f} \co \cat{K} \to \Arr{\cat{E}}$ factors through $\CartArr{\cat{E}}$, so by assumption admits a colimiting cone $\phi \co f \to f_0$ in $\CartArr{\cat{E}}$ over $\upsilon$.
  Since $\cod u$ is levelwise tiny, $\Hom{\cat{E}}{\cod u_i}{\upsilon}$ and $\Hom{\cat{E}}{\cod u_i}{\dom \phi}$ are colimit cocones in $\Set$.
  Because $f$ is a cartesian natural transformation, we have $\Hom{\Arr{\cat{E}}}{u_i}{f_k} \cong \Hom{\cat{E}}{\cod u_i}{Y_k} \times_{\Hom{\cat{E}}{\cod u_k}{Y_0}} \Hom{\Arr{\cat{E}}}{u_i}{f_0}$, and so $\Hom{\Arr{\cat{E}}}{u_i}{\phi}$ is also a colimit cocone by pullback stability of colimits in $\Set$.
  Thus the compatible family of sections for the maps $\leibpullapp{\conjugate{u}}(f_k)$ provided by $\bm{f}$ induces a unique compatible section of $\leibpullapp{\conjugate{u}}(f_0)$, providing a colimiting cocone $\bm{f} \to \Delta \bm{f}_0$ in $\FibOb{\cat{E}}{(\Alg{\monad{R}})}$ over $\upsilon$.
\end{proof}

This lemma applies in particular to the uniform fibrations \textsc{awfs}, and so we have:

\begin{example}
  \label{uniform-fibration-extension}
  Let $(t,I)$ be a finitary uniform fibration configuration in a presheaf category $\cat{E}$.
  Write $P \co \Fib{t}{I} \to \cat{E}$ for the category of monad algebras of the associated \textsc{awfs} with its codomain projection.
  Given lifts of $\leibpushapp{\delta_i}\phi^t \co \Slice{\cat{E}}{\Cof} \to \Arr{\cat{E}}$ through $\Ext{P}$ for $i \in \braces{0,1}$, we can construct a functor $\TCof{t}{I} \to \Ext{P}$ over $\Arr{\cat{E}}$ assigning a uniform fibration extension operation to every trivial cofibration.
\end{example}
\begin{proof}
  By \cref{lift-extension-operation} applied with the $\omega$-backdrop of levelwise complemented monomorphisms and \cref{awfs-algebras-van-kampen}, it suffices to check that colimits of $\alpha$-chains of levelwise complemented monomorphisms and cobase changes of levelwise complemented monomorphisms are Van Kampen in $\cat{E}$.
  This is the case in any presheaf category \cite{lack-sobocinski:06}~\cite[\S3]{shulman:15}.
\end{proof}

The generating case of \cref{uniform-fibration-extension}, namely extension along maps of the form $\leibpushapp{\delta_i}(m)$ where $m$ is a cofibration, can be treated directly in various cubical settings, as in \textcite[Lemma 7.3 and Corollary 7.4]{sattler:17} and \textcite[Proof of Proposition 120]{awodey:23}.

\begin{example}
  \Textcite[\S3]{shulman:19-all} introduces the framework of \emph{notions of fibred structure} to axiomatize algebraic incarnations of classes of maps such as fibrations and trivial fibrations that are closed under base change.
  In particular, the categories $\Alg{\monad{R}}$ and $\Alg{\UnderPtd{\monad{R}}}$ associated to an \textsc{awfs} $(\comonad{L},\monad{R})$ on a category $\cat{E}$ with pullbacks give rise to notions of fibred structure in $\cat{E}$ \cite[Example 3.7]{shulman:19-all}~\cite[Proposition 8]{bourke-garner:16}.

  Given a notion of fibred structure on a category $\cat{E}$, we have in particular a Grothendieck fibration $P \co \cat{F} \to \cat{E}$ from a category of structured maps $\cat{F}$ that projects the codomain of a structured map.
  Shulman's Proposition 3.18(i)$\Rightarrow$(iii) implies that for a \emph{locally representable} notion of fibred structure on $\cat{E}$, any colimit in $\cat{E}$ that is Van Kampen for the codomain fibration $\cod \co \Arr{\cat{E}} \to \cat{E}$ is also Van Kampen for the induced fibration $P$.
  Provided that $\cat{E}$ has enough Van Kampen colimits, extension operations for such structures are thus susceptible to \cref{lift-extension-operation}.
  Indeed, \cref{uniform-fibration-extension} could be factored through a proof that the notion of fibred structure corresponding to uniform fibration monad algebras is locally representable.
\end{example}

\section{Related work and unanswered questions}

\subsection{Coalgebraic cell complexes}

Our work complements that of Athorne on \emph{coalgebraic cell complexes} \cite{athorne:12,athorne:14}.
Athorne shows that under various conditions \cite[\S3.3]{athorne:14} on the category $\cat{E}$ and generating diagram $u \co \cat{J} \to \Arr{\cat{E}}$, the double category $\DCoalg{\comonad{L}}$ of comonad coalgebras for the \textsc{awfs} cofibrantly generated by $u$ is equivalent to a double category of formal cell complexes \cite[Theorem 3.5.1]{athorne:14}.
Compared to Athorne's results, ours are weaker in that we do not give any \emph{characterization} of the double category of coalgebras.
Our \cref{soa-copointed-coalgebras-double-functor} produces a pseudo double functor from $\DCoalg{\UnderPtd{\comonad{L}}}$ into some target, but it does not show that this functor is in some sense uniquely determined, so cannot be used as a universal property to derive an equivalence.

For our less ambitious goal of extending various structures from generators to left maps, our theorems offer a simpler interface, require checking fewer conditions, and are more general.
In particular, our results apply to \emph{any} \textsc{awfs} cofibrantly generated by a diagram that satisfies our smallness conditions in a cocomplete category, though their usefulness depends on the \textsc{awfs} and choice of backdrop.

We note that our backdrops play a role analogous to that of Athorne's class of \emph{typical inclusions} \cite[Definition 3.3.1]{athorne:12} (which is induced by a choice of \emph{typical nerve} \cite[Definition 3.3.2]{athorne:12}).
Whereas Athorne requires the typical inclusions to be monomorphisms, the conditions on a backdrop are always satisfied by the class of all morphisms of a category with the necessary colimits; that said, our concrete applications all use backdrops of monomorphisms.

We hope that future research can unify these two approaches to understanding left maps.
Ideally, improved versions of our results could be used to cleanly prove an equivalence between $\DCoalg{\comonad{L}}$ and a double category of cell complexes such as Athorne's, at least in some cases.
To reach that point, we would need in particular to resolve the following issues involving retract lifting.

\subsection{Retract lifting}

Our \cref{soa-copointed-coalgebras-functor,soa-copointed-coalgebras-double-functor} can assign some structure to every left map of a cofibrantly generated \textsc{awfs}, but they say very little about the assignment except that it is functorial.
In particular, there is no guarantee that the assignment is in some sense unique, nor that it relates to the input data in some way; we only get such guarantees in our \cref{soa-unit-vertical-point-abstract}, which is restricted to free coalgebras.
We extend the assignment from free coalgebras to all copointed endofunctor coalgebras by assuming a retract lifting operator on the target, and we know of no conditions we could put on this operator that would ensure uniqueness.

To see the difficulty, consider what should be the prototypical notion of composable structure: a double category of left maps with its forgetful functor $U_{\UnderPtd{\comonad{L}}} \co \DCoalg{\UnderPtd{\comonad{L}}} \to \Sq{\cat{E}}$.
It lifts retracts in the sense that given a section-retraction pair $f' \overset{\sigma}\to f \overset{\rho}\to f'$ in $\Arr{\cat{E}}$ and a coalgebra structure $\beta \co f \to Lf$ on $f$, we get a coalgebra structure $\beta' \defeq L\rho \circ \beta \circ \sigma \co f' \to Lf'$ on $f'$.
However, the retract diagram itself need not lift: $\sigma$ and $\rho$ need not be morphisms of coalgebras between $(f,\beta)$ and $(f',\beta')$.
In particular, ``having retract lifts'' is apparently a \emph{structure} rather than a \emph{property}; we can at least ask that the choice is functorial (and interacts nicely with composition of left maps, see \cref{compositional-retract-lifting}), but we know of no characteristic property of $(f,\beta')$ that abstractly singles it out among all lifts of $(f,\beta)$ along the retract.

One may also note that while \cref{soa-unit-vertical-point-abstract} deals with free coalgebras and \cref{soa-copointed-coalgebras-functor,soa-copointed-coalgebras-double-functor} deal with copointed endofunctor coalgebras, we have no theorem for mapping out of the intermediate category of \emph{comonad} coalgebras.
In that case, the natural requirement would be that the target notion of composable structure is \emph{closed under retract equalizers} in the sense of Garner \cite[Proposition 44]{garner:07}.
As used in Beck's monadicity theorem, any comonad coalgebra $(f, \beta \co f \to Lf)$ can be written as a retract equalizer of free coalgebras in $\Coalg{\comonad{L}}$, namely the equalizer of the fork
\[
  \begin{tikzcd}
    Lf \ar[yshift=3pt]{r}{L\beta} \ar[yshift=-3pt]{r}[below]{\Sigma_f} & LLf \rlap{.}
  \end{tikzcd}
\]
Given a notion of composable structure $U \co \dcat{A} \to \Sq{\cat{E}}$ and a lift $\bm{L}_{\dcat{A}} \co \Arr{\cat{E}} \to \VertArr{\dcat{A}}$ of the free coalgebra functor through $\dcat{A}$, one would like to extend the mapping to comonad coalgebras by assuming $U$ is closed under retract equalizers and sending $(f, \beta \co f \to Lf)$ to the equalizer of a fork
\[
  \begin{tikzcd}
    \bm{L}_{\dcat{A}}f \ar[yshift=3pt]{r}{\bm{L}_{\dcat{A}}\beta} \ar[yshift=-3pt]{r}[below]{?} & \bm{L}_{\dcat{A}}Lf \rlap{.}
  \end{tikzcd}
\]
However, we do not see how to arrange that $\Sigma_f$ lifts to a morphism $\bm{L}_{\dcat{A}} \to \bm{L}_{\dcat{A}}Lf$, nor alternatively how to adjust the retract equalizer closure condition to compensate for the absent map.

\section{Acknowledgments}

We thank Dani\"el Apol and Ivan Di Liberti for helpful conversations.
We thank Benno van den Berg for calling our attention to problems with our first draft's description of constructive transfinite recursion and for sharing his own approach with us.

The first author was supported by the Knut and Alice Wallenberg Foundation (KAW) under Grant No.\ 2019.0116, and the second author was supported by the US Air Force Office of Scientific Research under award number FA9550-24-1-0302.

\appendix

\section{Constructivity}
\label{sec:metatheory}

Our work is motivated by problems in semantics of homotopy type theory \cite{hott-book}.
Homotopical interpretations of type theory typically make use of \textsc{wfs}'s.
Because homotopy type theories are nominally constructive logics, there is a particular interest in building and studying their semantics within constructive metatheories.
However, working constructively requires substantial changes to standard uses of \textsc{wfs}'s in homotopy theory.
The existence of the (mono, epi) \textsc{wfs} in $\Set$, which is the basis for (mono, trivial fibration) \textsc{wfs}'s used in various model structures on presheaf categories, is equivalent to the axiom of choice.
Even the existence of a (mono, $\rlp{\text{mono}}$) \textsc{wfs} on $\Set$ is equivalent to $\Set$ having enough injectives \cite[Proposition 1.6]{adamek-herrlich-rosicky-tholen:02}, which may fail in predicative metatheories \cite{aczel-van-den-berg-granstrom-schuster:13}.
Instead, one can work with the (complemented mono, split epi) \textsc{wfs} generated by the singleton set $\braces{0 \mono 1}$.

The algebraic small object argument is particularly relevant to constructive homotopy theory for several reasons.
First, it is natural in the constructive setting to work with lifting \emph{structures} as opposed to lifting \emph{properties}, which the theory of \textsc{awfs}'s facilitates.
Second, Quillen's argument sometimes requires the axiom of choice where Garner's does not; see for example the discussion in \textcite[\S C]{henry:20} and the use of decidability assumptions to avoid choice in Quillen's argument \textcite[\S 2.2]{gambino-sattler-szumilo:22}.
The quotienting in Garner's argument makes these choices canonical.
Finally, relevant \textsc{awfs}'s such as uniform fibration \textsc{awfs}'s (\cref{sec:uniform-fibrations}) can be generated by a diagram but are apparently not constructively generated by any discrete set of generators.

Two points in our development deserve some comment from a constructive perspective: the uses of transfinite interation and of colimits.

\subsubsection{Ordinals and transfinite iteration}

The algebraic small object argument rests on a transfinite iteration of a certain well-pointed endofunctor.
This iteration can be seen to converge when the domains of the generating arrows are \emph{small} in the sense that mapping out of them commutes with colimits of sufficiently long chains.

In a classical metatheory, smallness requirements are often dealt with by assuming that the category $\cat{E}$ is \emph{locally presentable}, \ie, that there is a regular cardinal $\lambda$ and a set $S$ of $\lambda$-compact objects of $\cat{E}$ such that every object is a $\lambda$-filtered colimit of objects in $S$.
In a classical metatheory, this implies that for every object $A \in \cat{E}$ there is some regular $\kappa$ such that $\Hom{\cat{E}}{A}{-}$ preserves $\kappa$-filtered colimits \cite[Remark 2.15(1)]{adamek-rosicky:94}, in particular colimits of $\kappa$-chains.
Constructively, however, we may not have this implication.

Indeed, much of the classical theory of ordinals (and in turn of locally presentable categories) depends unavoidably on the law of the excluded middle and the axiom of choice.
In practice, one is limited to applying the algebraic small object argument with generators whose domains are $\omega$-compact, as we do with uniform fibrations \textsc{awfs}'s in \cref{sec:uniform-fibrations}.

Our main results, such as \cref{soa-unit-vertical-point,soa-copointed-coalgebras-functor}, are proven constructively but require as input a suitable limit ordinal $\kappa > 0$; the burden is on the user of the theorem to find an ordinal for which the generators are small.
In this way, we can state a single theorem that the constructivist can apply with the unproblematic choice $\kappa = \omega$ and the classical mathematician can apply with larger ordinals.
Behind the scenes, this requires that we define the central transfinite iteration in a constructively acceptable way even for $\kappa$ other than $\omega$, which we do by following \textcite{pitts-steenkamp:22}; see \cref{sec:well-pointed}, where we give the construction, for more on this point.

\textcite{seip:24} has already given a careful constructive treatment of Garner's small object argument for the case where $\kappa = \omega$.
\Textcite{hilhorst-north:24} have also produced a computer formalization of Garner's argument for $\kappa = \omega$ using the \texttt{UniMath} library \cite{unimath} of the Rocq proof assistant, which implements a variant of the Calculus of Inductive Constructions \cite{coquand-huet:88,paulin-mohring:93}.\footnote{\texttt{UniMath} extends Rocq's foundation with the axioms of Voevodsky's Univalent Foundations. This includes axioms of \emph{propositional resizing} for which there are as yet no known constructive models. However, \citeauthor{hilhorst-north:24}'s formalization does not use the full power of Univalent Foundations but only function extensionality and propositional truncation \cite[\S1]{hilhorst-north:24}, which do have constructive justifications.}

\subsubsection{Colimits}

Some constructive metatheories, such as Martin-L\"of type theory, do not assume the existence of general quotients.
The small object arguments are colimit constructions, so superficially would seem to be inapplicable in these contexts.
Even if we only assume $\Set$ has coproducts, however, we can use them to construct some additional colimits, in particular cobase changes along \emph{complemented} monomorphisms and colimits of $\omega$-chains of the same (see \cref{complemented-backdrop}).
As a corollary, the class of \emph{levelwise complemented} monos in any presheaf category $\PSh{\cat{C}}$ forms an $\omega$-backdrop.
Thus, if a category of generators $u \colon \cat{J} \to \Arr{\PSh{\cat{C}}}$ admits a density comonad $\Den{u}$ satisfying enough decidability assumptions, then it generates an \textsc{awfs} even in this weak setting.
In \cref{sec:uniform-fibrations}, we give conditions under which uniform box-filling fibration \textsc{awfs}'s can be constructed with this backdrop.
As with ordinals, parameterizing by a backdrop allows us to prove our results constructively without restricting their applicability for the classical mathematician.

The fact that we do not need general colimits in the metatheory is particularly interesting because these \textsc{awfs}'s can be used to interpret type theories that \emph{do} have colimits in the form of higher inductive types \cite{coquand-huber-mortberg:18,cavallo-harper:19}.
We see this as a higher analogue of setoid model constructions, which also interpret type theories with quotients in type theories without quotients \cite{hofmann:97,altenkirch-boulier-kaposi-tabareau:19}; compare also \citeauthor{gambino-henry-sattler-szumilo:22}'s effective model structure on simplicial objects in a countably lextensive category \cite{gambino-henry-sattler-szumilo:22}.

\printbibliography[heading=bibintoc]

\end{document}